\newtheorem{theorem}{Theorem}[section]
\newtheorem{lemma}[theorem]{Lemma}
\newtheorem{proposition}[theorem]{Proposition}
\newtheorem{corollary}[theorem]{Corollary}
\newtheorem{conjecture}[theorem]{Conjecture}
\theoremstyle{definition}
\newtheorem{definition}[theorem]{Definition}
\newcommand*{\N}{\mathbb{N}}
\newcommand*{\Z}{\mathbb{Z}}
\newcommand*{\R}{\mathbb{R}}
\newcommand*{\C}{\mathbb{C}}
\newcommand*{\F}{\mathbb{F}}
\DeclareMathOperator{\rowsp}{rowsp}
\DeclareMathOperator{\colsp}{colsp}
\DeclareMathOperator{\spn}{span}
\newcommand*{\tran}{^{\mkern-1.5mu{\scriptscriptstyle\mathsf{T}}}}
\newcommand*{\tranconj}{^{\mathsf{H}}}
\newcommand*{\mat}[1]{\mathbf{#1}}
\newcommand*{\txteq}[1]{\overset{\text{\tiny #1}}{=}}
\newcommand*{\txtneq}[1]{\overset{\text{\tiny #1}}{\neq}}
\newcommand*{\txtleq}[1]{\overset{\text{\tiny #1}}{\leq}}
\newcommand*{\txtimplies}[1]{\overset{\text{\tiny #1}}{\implies}}
\newcommand*{\txtin}[1]{\overset{\text{\tiny #1}}{\in}}
\newcommand*{\hpipe}{\rotatebox[origin=c]{90}{$|$}}
\newcommand*\pmat[1]{\begin{pmatrix}#1\end{pmatrix}}
\newcommand*{\macheps}{\varepsilon_{\mathrm{M}}}
\DeclarePairedDelimiter{\abs}{\lvert}{\rvert}
\DeclarePairedDelimiter{\norm}{\lVert}{\rVert}
\DeclarePairedDelimiter{\inp}{\langle}{\rangle}
\begin{document}

\newcommand{\Dname}{Patrick Sonnentag}
\newcommand{\Dnumber}{1233353}
\newcommand{\Dtitel}{Finding the Smallest Possible Exact Aggregation of a Markov Chain}
\newcommand{\Dthesis}{Bachelor's thesis}
\newcommand{\Dprofa}{Prof. Dr. Markus Siegle}
\newcommand{\Dprofb}{Prof. Dr. Maximilian Moll}
\newcommand{\Dadvisora}{Fabian Michel}
\newcommand{\Dday}{25.04.2025}

\frontmatter

\mainmatter

\chapter{Introduction}\label{ch:introduction}
Markov chains are used to model many different types of systems.
Such models include computer systems~\cite{haverkort2000cluster,wang2008rsvp}, biological or chemical processes~\cite{gillespie1977lotka,kierzek2001gene} and economic systems~\cite{simon1961ncdchains}.
These applications can yield very large Markov chains, for which an ordinary numerical analysis may no longer be computationally feasible.
This intractability can be remedied by reducing the size of a Markov chain while keeping important features.
As such, reduction of the computational eﬀort needed for analysis through state space reduction of a Markov chain has been studied in, for example,~\cite{abate2021aggregation,bittracher2021aggregation,bucholz2014aggregation, michel2025errbndmarkovaggr,simon1961ncdchains}.
However, all algorithms presented in the above papers reduce the state space so that the smaller system is again a Markov chain.
We remove this restriction, giving us greater freedom to choose such a reduction.
Specifically, we want the reduced system to, in some way, span the same, or a similar, subspace as the original system.
We achieve this by using the Arnoldi iteration, which was initially intended for approximating eigenvalues and -vectors.

We introduce basic notions such as Markov chains and the Arnoldi iteration in Chapter~\ref{ch:theoretical-preliminaries}.
This chapter serves as an introduction for the uninitiated.
As such, we provide a detailed explanation of existing results with no new content.
Then, in Chapter~\ref{ch:finding-small-aggregations}, we combine the Arnoldi iteration with Markov chains to provide a theoretical description of how we can use the Arnoldi iteration to find aggregations of a given Markov chain.
We further show that the already known error bounds for aggregations can be tight in this special case.
In Chapter~\ref{ch:numerical-implementation}, we analyse the implementation of this novel aggregation method to show where numerical errors might arise, how to combat them, and how it behaves in terms of runtime and memory complexity.
We further give a heuristic to determine when the aggregation best approximates the original Markov chain.
Lastly, we compare this aggregation method to another one on a range of different models and their respective Markov chains.
In the end, in Chapter~\ref{ch:conclusion}, we summarize the results and give an outlook on where this aggregation method might be improved in what ways.

\chapter{Theoretical preliminaries}\label{ch:theoretical-preliminaries}
This chapter serves the purpose of putting together the theoretical framework that will serve as the foundation of our ideas.
In Section~\ref{sec:markov-chains-and-aggr}, we will formally introduce the notion of discrete-time Markov chains and their aggregations.
Section~\ref{sec:error-bounds-on-approximated-transient-distributions} looks at how to determine the error made by aggregating a Markov chain.
Lastly, we look at the so-called Arnoldi iteration in Section~\ref{sec:arnoldi-iteration}, originally devised to calculate eigenvalues and eigenvectors.
However, we will later use it as a method of finding aggregations of Markov chains.
\section{Markov chains and their aggregation}\label{sec:markov-chains-and-aggr}
Roughly speaking, a Markov chain is a random process in which the probability for the next state of this process depends solely on its current state.
It may be easier to view such a process as the random traversal of a weighted and directed graph.
Intuitively, these two situations are equivalent:
At each node of the graph, corresponding to a state in the process, we can only travel along an outgoing edge to the next node or state.
We select which edge to take by looking at its weight.
More precisely, we need the weight of an edge $(a, b)$ to be the probability that we switch from state $a$ to $b$ in the process.
Take, for example:
\begin{figure}[H]
    \centering
    \begin{tikzcd}
        a \arrow[rrr, "\frac{1}{3}"', bend left] \arrow[ddd, "\frac{2}{3}", bend left]                      &  &  & b \arrow[ddd, "1"', bend left]                                          \\
        &  &  &                                                                         \\
        &  &  &                                                                         \\
        d \arrow[uuu, "\frac{3}{4}"', bend left] \arrow["\frac{1}{4}", loop, distance=2em, in=305, out=235] &  &  & c \arrow[lll, "\frac{1}{2}"', bend left] \arrow[llluuu, "\frac{1}{2}"']
    \end{tikzcd}
    \caption{A weighted, directed graph representing a Markov chain with four states}
    \label{fig:ex-markov-chain}
\end{figure}
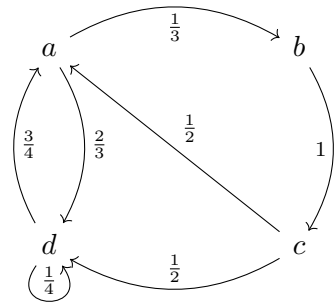
We can see the probability of reaching each state depending on the current state.
If we are in state $a$, we can move to $b$ with probability $\frac{1}{3}$ or to $d$ with $\frac{2}{3}$.
One cannot reach state $c$ in one step if we are in state $a$, as no edge connects the two corresponding nodes.
However, usually, we want to know more than what state we will probably end up in the next step, for example, knowing the same after arbitrarily many steps.
In many applications, this question is central.
For instance, in a variant of the so-called \emph{Ressource Reservation Protocol}, seen in~\cite{wang2008rsvp}, each device in a network is modelled as a Markov chain, with states for it not using any resources, requesting resources, giving them back, and so on.
To be able to make predictions about when a network using this protocol reaches its finite capacity, one needs to determine the likelihood of each device being in a particular state.

For this section, we take the definitions directly related to Markov chains from~\cite{blitzstein2019introductionmarkov} while only partially changing minor things like the naming of indices.
Additionally, a \enquote{marginal distribution} there is called a \enquote{transient distribution} here.
\begin{definition}[\Ac{DTMC}]\label{def:dtmc}
A \emph{discrete-time Markov chain} with finite \emph{state space} $S \coloneqq \set{1,\dots,n}$ is a sequence of random variables $X_0, X_1,\ldots \in S$ where for all $k$ it is
\[
    P(X_{k+1} = x\, |\, X_0 = x_0, X_1 = x_1, \dots, X_k = x_k) = P(X_{k+1} = x\, |\, X_k = x_k).
\]
\end{definition}
From here on, we will always refer to a \ac{DTMC} when mentioning just Markov chains.
\begin{definition}[Transition matrix]\label{def:trns-mtrx}
The \emph{transition matrix} $\mat{P} \in \R^{n \times n}$, holding all \emph{transition probabilities} of a Markov chain, has the entries
\[
    \mat{P}(i, j) \coloneqq P(X_{k+1} = j\, |\, X_k = i).
\]
\end{definition}
Usually, the transition matrix is the sole descriptor of a Markov chain, as the matrix size immediately indicates the state space size.
\begin{definition}[Initial and transient distribution]\label{def:initandtransdistr}
\label{def:ini-trns-distr}
Given a Markov chain and an \emph{initial distribution} $p_0 \in \R^n$, meaning we have $p_0(i) = P(X_0 = i)$, we call
\[
    (P(X_k = 1), \dots, P(X_k = n))\tran \in \R^n
\]
the \emph{$k$-step transient distribution} $p_k$ of the Markov chain with starting distribution $p_0$.
\end{definition}
\begin{proposition}\label{prop:ksteptransdistr}
Given a Markov chain with an initial distribution $p_0 \in \R^n$, we can represent any $k$-step transient distribution as $p_0\tran \mat{P}^k$.
\end{proposition}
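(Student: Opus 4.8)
The plan is to proceed by induction on $k$. For the base case $k = 0$ we have $\mat{P}^0 = \mat{I}$, so $p_0\tran \mat{P}^0 = p_0\tran$, which by Definition~\ref{def:ini-trns-distr} is exactly the $0$-step transient distribution $p_0$ (the initial distribution itself). Since the transient distributions are defined entrywise as $p_k(i) = P(X_k = i)$, it will be convenient throughout to work coordinatewise and identify the row vector $p_0\tran \mat{P}^k$ with the assertion $p_k\tran = p_0\tran \mat{P}^k$.

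For the inductive step I would assume $p_k\tran = p_0\tran \mat{P}^k$, i.e.\ $p_k(i) = P(X_k = i)$ for every state $i \in S$, and aim to show $p_{k+1}\tran = p_0\tran \mat{P}^{k+1} = (p_0\tran \mat{P}^k)\mat{P} = p_k\tran \mat{P}$. Fixing a target state $j$, I would partition the sample space according to the value of $X_k$ and apply the law of total probability,
\[
    P(X_{k+1} = j) = \sum_{i=1}^n P(X_{k+1} = j,\ X_k = i).
\]
Whenever $P(X_k = i) > 0$, the $i$-th summand equals $P(X_{k+1} = j \mid X_k = i)\,P(X_k = i) = \mat{P}(i,j)\,p_k(i)$ by Definition~\ref{def:trns-mtrx} and the induction hypothesis; and when $P(X_k = i) = 0$ the summand is $0$, as is $\mat{P}(i,j)\,p_k(i)$ since $p_k(i) = 0$. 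Hence $P(X_{k+1} = j,\ X_k = i) = p_k(i)\,\mat{P}(i,j)$ in all cases, so that $P(X_{k+1} = j) = \sum_{i=1}^n p_k(i)\,\mat{P}(i,j) = (p_k\tran \mat{P})(j)$, which is precisely the claim for $k+1$.

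The only genuine subtlety — and it is a mild one — is the conditioning on the null events $\{X_k = i\}$ with $P(X_k = i) = 0$, for which the conditional probability is not defined; I sidestep this by phrasing the recursion in terms of joint probabilities, as above. I would also point out that the argument tacitly relies on time-homogeneity, i.e.\ that $P(X_{k+1} = j \mid X_k = i)$ does not depend on $k$ and may therefore be identified with the single entry $\mat{P}(i,j)$ of Definition~\ref{def:trns-mtrx}; notably, the full Markov property of Definition~\ref{def:dtmc} is not needed for this particular statement, only this homogeneity together with the law of total probability. Everything else is routine linear algebra and bookkeeping of indices.
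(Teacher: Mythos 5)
Your proposal is correct and follows essentially the same route as the paper's proof: induction on $k$ with the one-step law of total probability over the value of $X_k$, matching entries of $p_k\tran \mat{P}$ against $P(X_{k+1}=j)$. Your extra care about null events $\{X_k = i\}$ and the tacit time-homogeneity in Definition~\ref{def:trns-mtrx} is a refinement of details the paper glosses over, not a different argument.
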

\begin{proof}
    We proceed by induction on $k$.
    For the base case $k = 0$, we get
    \[
        p_0\tran \mat{P}^0 = p_0\tran \mat{I} = p_0\tran \txteq{Def. \ref{def:ini-trns-distr}} (P(X_0 = 1), \dots, P(X_0 = n)).
    \]
    In the inductive step, one can reason with the \ac{IH} and $j \in S$ that
    \begin{align*}
        \left( p_0\tran \mat{P}^{k + 1} \right)\!(j) &= \left( p_0\tran \mat{P}^k \mat{P} \right)\!(j)\\
        &\txteq{\ac{IH}} \left( (P(X_k = 1), \dots, P(X_k = n)) \mat{P} \right)\!(j) = \sum_{i \in S} P(X_k = i) \mat{P}(i,j)\\
        &\txteq{Def. \ref{def:trns-mtrx}} \sum_{i \in S} P(X_k = i) P(X_{k + 1} = j\, |\, X_k = i) = P(X_{k + 1} = j).\qedhere
    \end{align*}
\end{proof}
We further define
\begin{definition}[Stationary distribution]\label{def:statdistr}
Given a Markov chain through the transition matrix $\mat{P} \in \R^{n \times n}$, we call a probability distribution $p \in \R^n$ a \emph{stationary distribution} of the Markov chain, if, and only if
\[
    p\tran \mat{P} = p\tran.
\]
\end{definition}
As mentioned before in the introduction, computing transient probabilities of a Markov chain is elementary in many applications.
We also know that the naive way of calculating a $k$-step transient distribution boils down to $k$ vector-matrix multiplications.
So, if we use a Markov chain with $n$ states, the runtime is in $\mathcal{O}(n^2 k)$.
Thus, reducing the state space size is ideal to reduce runtime while still being able to compute the transient distributions.
The so-called exact state space aggregation of a Markov chain enables precisely this.
There, we find some linear function acting on a smaller-dimensional space, which behaves the same in some way.
This means we can transform the results back to the larger original space to get any transient distribution with no loss of information.

The definitions needed for aggregations, as presented here, are taken from~\cite[3, 9]{michel2025errbndmarkovaggr}.
\begin{definition}[State space aggregation of dimension $m$]\label{def:markov-chain-aggr}
Given $n, m \in \N$ with $m \leq n$ and a Markov chain as its transition matrix $\mat{P} \in \R^{n \times n}$ along with an initial distribution $p_0 \in \R$, we call any $\mat{\Pi} \in \R^{m \times m}$ the \emph{aggregated step matrix}, $\mat{A} \in \R^{m \times n}$ the \emph{disaggregation matrix} and arbitrary $\pi_0 \in \R^m$ the \emph{aggregated initial vector}.
\end{definition}
When referring to a state space aggregation, we will mainly just speak of an aggregation.
Even though $\mat{\Pi}$ and $\pi_0$ do not necessarily have to describe a Markov chain or distribution, respectively, we will still treat them like one, as in Proposition~\ref{prop:ksteptransdistr}.
Furthermore, we still say that the state space size of the aggregation is $m$, although technically, we do not explicitly have discrete states.
\begin{definition}[Aggregated transient distribution]\label{def:aggr-trans-distr}
Given an aggregation of a Markov chain and $k \in \N \setminus \set{0}$, we call
\[
    \pi_k\tran \coloneqq \pi_0\tran \mat{\Pi}^k
\]
the \emph{$k$-step aggregated transient distribution}.
\end{definition}
\begin{definition}[Approxmiated transient distribution]\label{def:approx-trans-distr}
Once more, given an aggregation and $k \in \N$,
\[
    \tilde{p}_k\tran \coloneqq \pi_k\tran \mat{A} \in \R^n
\]
denotes the $k$-step \emph{approximated transient distribution}.
\end{definition}
\begin{definition}[Aggregated and approximated stationary distribution]\label{def:aggrstatdistr}
Given an aggregation of a Markov chain, we call $\pi \in \R^m$ with
\[
    \pi\tran \mat{\Pi} = \pi\tran
\]
an \emph{aggregated stationary distribution}, if the \emph{approximated stationary distribution}
\[
    \tilde{p}\tran \coloneqq \pi\tran \mat{A} \in \R^n
\]
has that $\norm{\tilde{p}}_1 = 1$ holds true.
\end{definition}
Importantly, such a $\pi$ must not always exist.
In such cases, it can be useful to merely desire something of the form of $\pi\tran \mat{\Pi} \approx \pi\tran$.
As the notation already suggests, $\tilde{p}_k$ and $\tilde{p}$ should approximate $p_k$ and $p$ respectively.
To further formalize this idea, we now look at ways to restrict an aggregation such that $p_k = \tilde{p}_k$ holds for at least some or maybe even all $k$.
\begin{definition}[(Dynamic-) Exact aggregation]\label{def:dyn-exact-aggr}
The aggregation of a Markov chain is \emph{dynamic-exact} if, and only if,
\[
    \mat{\Pi} \mat{A} = \mat{A} \mat{P}.
\]
If further $\pi_0\tran \mat{A} = p_0$ holds, then the aggregation is \emph{exact}.
\end{definition}
This may initially seem somewhat disconnected from our initial idea of describing the quality of an aggregation.
Nevertheless, closer inspection shows a connection between these two things.
\begin{proposition}\label{prop:exact-aggr-implies-k-exact}
For an aggregation with aggregated stationary distribution, it holds that
\[
    \mat{\Pi} \mat{A} = \mat{A} \mat{P}\ \text{and}\ \pi_0\tran \mat{A} = p_0 \implies \forall k \in \N : \pi_0\tran \mat{\Pi}^k \mat{A} = p_0\tran \mat{P}^k\ \text{and}\ \tilde{p}\tran \mat{P} =\tilde{p}.
\]
\end{proposition}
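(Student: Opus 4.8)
The plan is to split the implication into its two conclusions and prove each separately; in both halves the only thing actually used is the dynamic-exactness identity $\mat{\Pi}\mat{A} = \mat{A}\mat{P}$.

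For the first conclusion, I would first prove by induction on $k$ that $\mat{\Pi}^k\mat{A} = \mat{A}\mat{P}^k$ for every $k \in \N$. The base case $k = 0$ reduces to $\mat{A} = \mat{A}$. For the step, assuming $\mat{\Pi}^k\mat{A} = \mat{A}\mat{P}^k$, I compute $\mat{\Pi}^{k+1}\mat{A} = \mat{\Pi}^k(\mat{\Pi}\mat{A}) = \mat{\Pi}^k(\mat{A}\mat{P}) = (\mat{\Pi}^k\mat{A})\mat{P} = \mat{A}\mat{P}^k\mat{P} = \mat{A}\mat{P}^{k+1}$, where the second equality is dynamic-exactness and the fourth is the induction hypothesis. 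Left-multiplying by $\pi_0\tran$ and invoking the exactness assumption $\pi_0\tran\mat{A} = p_0\tran$ yields $\pi_0\tran\mat{\Pi}^k\mat{A} = \pi_0\tran\mat{A}\mat{P}^k = p_0\tran\mat{P}^k$, which is the claimed equality; in the notation of Definition~\ref{def:approx-trans-distr} and Proposition~\ref{prop:ksteptransdistr} it says $\tilde{p}_k = p_k$ for all $k$.

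For the second conclusion, let $\pi$ be an aggregated stationary distribution, so that $\pi\tran\mat{\Pi} = \pi\tran$ and, by Definition~\ref{def:aggrstatdistr}, $\tilde{p}\tran = \pi\tran\mat{A}$ with $\norm{\tilde{p}}_1 = 1$. Then a single chain of equalities closes it: $\tilde{p}\tran\mat{P} = \pi\tran\mat{A}\mat{P} = \pi\tran\mat{\Pi}\mat{A} = \pi\tran\mat{A} = \tilde{p}\tran$, using dynamic-exactness and then stationarity of $\pi$.

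I do not expect a genuine obstacle: the proposition is a bookkeeping exercise in shifting $\mat{A}$ across products through $\mat{\Pi}\mat{A} = \mat{A}\mat{P}$. Two minor points deserve a remark. First, the second conclusion never uses $\pi_0\tran\mat{A} = p_0\tran$, only dynamic-exactness, so the stated hypothesis is slightly more than that half needs. Second, one should note that the $\tilde{p}$ in the conclusion is exactly the approximated stationary distribution attached to the given aggregated stationary distribution $\pi$ (in particular $\norm{\tilde{p}}_1 = 1$ is already guaranteed), which is immediate from Definition~\ref{def:aggrstatdistr} and requires no extra argument.
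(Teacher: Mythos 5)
Your proposal is correct and follows essentially the same route as the paper: an induction on $k$ driven by the identity $\mat{\Pi}\mat{A} = \mat{A}\mat{P}$ for the transient part, and the same short chain $\tilde{p}\tran\mat{P} = \pi\tran\mat{A}\mat{P} = \pi\tran\mat{\Pi}\mat{A} = \pi\tran\mat{A} = \tilde{p}\tran$ for the stationary part. The only cosmetic difference is that you induct on the matrix identity $\mat{\Pi}^k\mat{A} = \mat{A}\mat{P}^k$ and then multiply by $\pi_0\tran$, whereas the paper inducts directly on the vector identity $\pi_0\tran\mat{\Pi}^k\mat{A} = p_0\tran\mat{P}^k$; your observation that the stationary-distribution hypothesis is only needed for the second conclusion matches the paper's own remark.
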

\begin{proof}
    We proceed by induction on $k$.
    Set $k = 0$ for the base case, yielding
    \[
        \pi_0\tran \mat{\Pi}^0 \mat{A} = \pi_0\tran \mat{A} = p_0,
    \]
    enabling us to move on to the inductive step.
    For the \ac{IH}, assume that we have $\pi_0\tran \mat{\Pi}^k \mat{A} = p_0\tran \mat{P}^k$ for some $k \in \N$.
    Then, with
    \[
        \pi_0\tran \mat{\Pi}^{k + 1} \mat{A} = \pi_0\tran \mat{\Pi}^k \mat{\Pi} \mat{A} = \pi_0\tran \mat{\Pi}^k \mat{A} \mat{P} \txteq{\ac{IH}} p_0\tran \mat{P}^k \mat{P} = p_0\tran \mat{P}^{k + 1},
    \]
    we finish the proof for all transient distributions.
    Note that the assumption of there being a $\pi$ was not needed yet.
    We only need it to show
    \[\pi\tran \mat{\Pi} \mat{A} = \pi\tran \mat{A} \mat{P} \iff \pi\tran \mat{A} = \tilde{p}\tran \mat{P} \iff \tilde{p}\tran = \tilde{p}\tran \mat{P}.\qedhere\]
\end{proof}
This proposition is relevant because it shows us that for any exact aggregation, $p_k = \tilde{p}_k$, which is just what we want from an ideal aggregation.
Of course, it may also be worth looking at aggregations where this holds for only some $k$, as those are probably easier to find.
Furthermore, it would be ideal if Proposition~\ref{prop:exact-aggr-implies-k-exact} were an equivalence, as this would show these concepts to be one and the same.
Unfortunately, the best we can get is the weaker:
\begin{proposition}\label{prop:k-exact-aggr-almost-implies-exact}
If for an aggregation, $\pi_0\tran \mat{\Pi}^k \mat{A} = p_0\tran \mat{P}^k$ holds for all $k$, and if further there are pairwise distinct indices $i_0, \dots, i_{m - 1} \in \N$ with $\pi_{i_0}, \dots, \pi_{i_{m - 1}}$ linearly independent, the aggregation is exact.
\end{proposition}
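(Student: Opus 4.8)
The plan is to separate the two conditions that make up exactness (Definition~\ref{def:dyn-exact-aggr}). Instantiating the hypothesis at $k = 0$ gives $\pi_0\tran \mat{A} = p_0\tran$, since the zeroth powers of $\mat{\Pi}$ and $\mat{P}$ are identity matrices; this is already the second requirement, so the whole task reduces to establishing the dynamic-exactness identity $\mat{\Pi}\mat{A} = \mat{A}\mat{P}$.

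For this, I would first rewrite the hypothesis using $\pi_k\tran = \pi_0\tran \mat{\Pi}^k$ (Definition~\ref{def:aggr-trans-distr}, the $k = 0$ case being trivial) as $\pi_k\tran \mat{A} = p_0\tran \mat{P}^k$ for every $k \in \N$. The crucial observation is then the chain of equalities
\[
    \pi_k\tran \mat{\Pi}\mat{A} = \pi_0\tran \mat{\Pi}^{k + 1}\mat{A} = \pi_{k + 1}\tran \mat{A} = p_0\tran \mat{P}^{k + 1} = p_0\tran\mat{P}^k\mat{P} = \pi_k\tran \mat{A}\mat{P},
\]
valid for each $k$, which rearranges to $\pi_k\tran(\mat{\Pi}\mat{A} - \mat{A}\mat{P}) = \mat{0}$ for all $k$, in particular for $k \in \set{i_0,\dots,i_{m - 1}}$.

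To conclude, I would stack the rows $\pi_{i_0}\tran, \dots, \pi_{i_{m - 1}}\tran$ into one matrix $\mat{M} \in \R^{m \times m}$. These rows are linearly independent by assumption, so $\mat{M}$ is invertible, and the identity above becomes $\mat{M}(\mat{\Pi}\mat{A} - \mat{A}\mat{P}) = \mat{0}$; left-multiplying by $\mat{M}^{-1}$ then yields $\mat{\Pi}\mat{A} = \mat{A}\mat{P}$, which together with the first step proves exactness.

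I do not expect a real obstacle here, since the argument is elementary linear algebra. The only point worth flagging is that deriving $\pi_k\tran(\mat{\Pi}\mat{A} - \mat{A}\mat{P}) = \mat{0}$ at an index $i_j$ uses the hypothesis at both $i_j$ and $i_j + 1$, so the assumption that $k$-exactness holds for \emph{all} $k$ is genuinely needed; the linear independence of the $\pi_{i_j}$ is exactly what upgrades these $m$ vector identities to the full matrix identity $\mat{\Pi}\mat{A} = \mat{A}\mat{P}$.
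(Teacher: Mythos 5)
Your proof is correct and follows essentially the same route as the paper: both derive $\pi_k\tran(\mat{\Pi}\mat{A} - \mat{A}\mat{P}) = \mathbf{0}$ by playing the hypothesis at consecutive steps off against each other, then use the linear independence of $\pi_{i_0},\dots,\pi_{i_{m-1}}$ to force $\mat{\Pi}\mat{A} = \mat{A}\mat{P}$, with the $k = 0$ instance supplying $\pi_0\tran\mat{A} = p_0\tran$. Packaging the last step as an invertible stacked matrix $\mat{M}$ instead of the paper's row-space/span argument is only a cosmetic difference.
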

\begin{proof}
    Through the calculation
    \begin{align*}
        p_0\tran \mat{P}^k &= \pi_0\tran \mat{\Pi}^k \mat{A} = \pi_0\tran \mat{\Pi}^k \mat{A} - \pi_0\tran \mat{\Pi}^{k - 1} \mat{A} \mat{P} + \pi_0\tran \mat{\Pi}^{k - 1} \mat{A} \mat{P}\\
        &= \pi_0\tran \mat{\Pi}^{k - 1} (\mat{\Pi} \mat{A} - \mat{A} \mat{P}) + p_0\tran \mat{P}^k
    \end{align*}
    we see that $\pi_0\tran \mat{\Pi}^{k - 1} (\mat{\Pi} \mat{A} - \mat{A} \mat{P}) = (0, \dots, 0)$.
    Now we observe the resulting linear combination
    \begin{equation}
        \pi_0\tran \mat{\Pi}^{k - 1} (\mat{\Pi} \mat{A} - \mat{A} \mat{P}) \txteq{Def.~\ref{def:aggr-trans-distr}} \sum_{i = 1}^m \pi_{k - 1}(i) (\mat{\Pi} \mat{A} - \mat{A} \mat{P})(i) = (0, \dots, 0),\label{eq:lincombimpliesexact}
    \end{equation}
    where $(\mat{\Pi} \mat{A} - \mat{A} \mat{P})(i)$ is the $i$-th row of $\mat{\Pi} \mat{A} - \mat{A} \mat{P}$.
    Without loss of generality, we may assume that $i_0 < \dots < i_{m - 1}$.
    Equation~\eqref{eq:lincombimpliesexact} holds especially for $i_0 + 1 \leq k \leq i_{m - 1} + 1$ too.
    As $\pi_{i_0}, \dots \pi_{i_{m - 1}}$ are linearly independent,
    \[
        \spn\set{\pi_{i_0}\tran(\mat{\Pi} \mat{A} - \mat{A} \mat{P}), \dots, \pi_{i_{m - 1}}\tran(\mat{\Pi} \mat{A} - \mat{A} \mat{P})} = \rowsp(\mat{\Pi} \mat{A} - \mat{A} \mat{P})
    \]
    But we know each vector in this span to be the zero-vector, thus $\rowsp(\mat{\Pi} \mat{A} - \mat{A} \mat{P}) = \set{\mathbf{0}}$, which implies that $\mat{\Pi} \mat{A} = \mat{A} \mat{P}$.
    Lastly, the second condition for exactness follows immediately from the special case of $k = 0$.
\end{proof}
Note that the added condition of there being pairwise distinct indices $i_0, \dots, i_{m - 1} \in \N$ with $\pi_{i_0}, \dots, \pi_{i_{m - 1}}$ linearly independent in Proposition~\ref{prop:k-exact-aggr-almost-implies-exact} cannot be relaxed.
If there were fewer such vectors $\pi_{i_j}$, they could no longer span $\rowsp(\mat{\Pi} \mat{A} - \mat{A} \mat{P})$.

\section{Error bounding approximated transient distributions}\label{sec:error-bounds-on-approximated-transient-distributions}
As we have seen in Propositions~\ref{prop:exact-aggr-implies-k-exact} and~\ref{prop:k-exact-aggr-almost-implies-exact}, there is a close connection between the exactness of an aggregation and whether $\pi_0 \mat{\Pi}^k \mat{A} = p_0 \mat{P}^k$ holds.
So further examining their connection is of great interest.
Following~\cite[2, 4--5]{michel2025errbndmarkovaggr} we will see that we can indeed make such a connection.

We will start by introducing some preliminaries from linear algebra, which are necessary to understand the error bounds introduced later.
Before that, note that for a vector $v$ or matrix $\mat{M}$, the notation $\abs{v}$ or $\abs{\mat{M}}$ denotes the vector or matrix where each entry is the absolute value of the corresponding entry in $v$ or $\mat{M}$.
Further, $\mathbf{1}_n \in \R^n$ contains only ones.
\begin{definition}[Maximum absolute row sum norm]\label{def:max-row-sum-norm}
As defined in~\cite[222]{beilina2017numlinalg}, let
\[\norm{\,\cdot\,}_\infty : \R^{n \times m} \to \R,\ \mat{M} \mapsto \max_{1 \leq i \leq n} \sum_{j = 1}^m \abs*{\mat{M}(i, j)}\]
be the so-called \emph{maximum absolute row sum norm} of $\mat{M} \in \R^{n \times m}$.
\end{definition}
\begin{proposition}\label{prop:inf-norm-is-norm}
The function $\norm{\,\cdot\,}_\infty$ from Definition~\ref{def:max-row-sum-norm} presents a norm on $\R^{n \times m}$, which is also submultiplicative, meaning $\norm{\mat{M}_1 \mat{M}_2}_\infty \leq \norm{\mat{M}_1}_\infty \cdot \norm{\mat{M}_2}_\infty$ for $\mat{M}_1 \in \R^{n \times m}$ and $\mat{M}_2 \in \R^{m \times k}$.
\end{proposition}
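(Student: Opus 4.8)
The plan is to verify the three norm axioms (positive definiteness, absolute homogeneity, triangle inequality) and then submultiplicativity, in that order. For a matrix $\mat{M} \in \R^{n \times m}$, each of the $n$ quantities $\sum_{j=1}^m \abs{\mat{M}(i,j)}$ is a nonnegative real number, so their maximum is nonnegative; it is zero exactly when every row sum of absolute values is zero, which happens precisely when all entries vanish, i.e.\ $\mat{M} = \mathbf{0}$. For homogeneity, observe that scaling $\mat{M}$ by $\lambda \in \R$ multiplies each $\abs{\mat{M}(i,j)}$ by $\abs{\lambda}$, hence each row sum by $\abs{\lambda}$, and the maximum of nonnegative numbers scales the same way, so $\norm{\lambda \mat{M}}_\infty = \abs{\lambda} \norm{\mat{M}}_\infty$.

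For the triangle inequality, fix $\mat{M}_1, \mat{M}_2 \in \R^{n \times m}$ and let $i^\ast$ be an index attaining the maximum for $\mat{M}_1 + \mat{M}_2$. Then I would estimate
\[
    \norm{\mat{M}_1 + \mat{M}_2}_\infty = \sum_{j=1}^m \abs{\mat{M}_1(i^\ast,j) + \mat{M}_2(i^\ast,j)} \leq \sum_{j=1}^m \abs{\mat{M}_1(i^\ast,j)} + \sum_{j=1}^m \abs{\mat{M}_2(i^\ast,j)},
\]
using the triangle inequality in $\R$ entrywise, and then bound each of the two sums by the corresponding maximum over all rows, giving $\norm{\mat{M}_1 + \mat{M}_2}_\infty \leq \norm{\mat{M}_1}_\infty + \norm{\mat{M}_2}_\infty$.

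Submultiplicativity is the part that takes the most care. For $\mat{M}_1 \in \R^{n \times m}$ and $\mat{M}_2 \in \R^{m \times k}$, pick a row index $i^\ast$ realizing the maximum for the product $\mat{M}_1 \mat{M}_2$, and expand the matrix product entrywise:
\[
    \norm{\mat{M}_1 \mat{M}_2}_\infty = \sum_{\ell=1}^k \abs*{\sum_{j=1}^m \mat{M}_1(i^\ast,j)\,\mat{M}_2(j,\ell)} \leq \sum_{\ell=1}^k \sum_{j=1}^m \abs{\mat{M}_1(i^\ast,j)}\,\abs{\mat{M}_2(j,\ell)}.
\]
The trick is then to swap the order of summation, factor out $\abs{\mat{M}_1(i^\ast,j)}$, and bound $\sum_{\ell=1}^k \abs{\mat{M}_2(j,\ell)}$ by $\norm{\mat{M}_2}_\infty$ uniformly in $j$; what remains is $\norm{\mat{M}_2}_\infty \sum_{j=1}^m \abs{\mat{M}_1(i^\ast,j)} \leq \norm{\mat{M}_1}_\infty \norm{\mat{M}_2}_\infty$. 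The one subtlety worth stating explicitly is that this already covers the submultiplicativity of the induced vector norm as the special case $k = 1$; I would mention that a vector in $\R^m$ is just a matrix in $\R^{m \times 1}$, so no separate argument is needed, and the whole statement follows from the single chain of inequalities above.
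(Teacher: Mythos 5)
Your proposal is correct and follows essentially the same route as the paper: a direct verification of the norm axioms followed by the standard expand-swap-factor chain of inequalities for submultiplicativity (your choice of a maximizing row index $i^\ast$ is just a notational variant of carrying the maximum along). The closing remark about the $k=1$ vector case is a harmless aside not needed for the statement itself.
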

\begin{proof}
    We proceed in this proof, by following the definition of a norm for arbitrary matrices $\mat{M}_{1}, \mat{M}_3 \in \R^{n \times m}$ and $\mat{M}_2 \in \R^{m \times k}$.
    \begin{itemize}
        \item Positive definiteness:
        This follows from the positive definiteness of the absolute value used in the summation within $\norm{\,\cdot\,}_\infty$.
        \item Multiplicativity:
        This is concluded through the multiplicativity of the $\max$ function, summation, and that of $\abs{\,\cdot\,}$.
        \item Triangle inequality: Through calculation, we can also show this condition to be true:
        \begin{align*}
            \norm*{\mat{M}_1 + \mat{M}_3}_\infty &\txteq{Def.~\ref{def:max-row-sum-norm}} \max_{1 \leq i \leq n} \sum_{j = 1}^m \abs*{\mat{M}_1(i,j) + \mat{M}_3(i, j)}\\
            &\leq \max_{1 \leq i \leq n} \sum_{j = 1}^m \abs*{\mat{M}_1(i, j)} + \sum_{j = 1}^m \abs*{\mat{M}_3(i, j)}\\
            &\leq \max_{1 \leq i \leq n} \sum_{j = 1}^m \abs*{\mat{M}_1(i, j)} + \max_{1 \leq i \leq n} \sum_{j = 1}^m \abs*{\mat{M}_3(i, j)} \txteq{Def.~\ref{def:max-row-sum-norm}} \norm*{\mat{M}_1}_\infty + \norm*{\mat{M}_3}_\infty.
        \end{align*}
        \item Submultiplicativity: Again, a calculation suffices to finish the proof:
        \begin{align*}
            \norm{\mat{M}_1 \mat{M}_2}_\infty &\txteq{Def.~\ref{def:max-row-sum-norm}} \max_{1 \leq i \leq n} \sum_{j = 1}^k \abs*{\mat{M}_1\mat{M}_2(i, j)} = \max_{1 \leq i \leq n} \sum_{j = 1}^k \abs*{\sum_{\ell = 1}^m \mat{M}_1(i, \ell) \mat{M}_2(\ell, j)}\\
            &\leq \max_{1 \leq i \leq n} \sum_{j = 1}^k \sum_{\ell = 1}^m \abs*{\mat{M}_1(i, \ell)} \abs*{\mat{M}_2(\ell, j)} = \max_{1 \leq i \leq n} \sum_{\ell = 1}^m \abs*{\mat{M}_1(i, \ell)} \sum_{j = 1}^k \abs*{\mat{M}_2(\ell, j)}\\
            &\leq \left( \max_{1 \leq i \leq n} \sum_{\ell = 1}^m \abs*{\mat{M}_1(i, \ell)} \right) \left( \max_{1 \leq \ell \leq m} \sum_{j = 1}^k \abs*{\mat{M}_2(\ell, j)}  \right)\\
            &\txteq{Def.~\ref{def:max-row-sum-norm}} \norm{\mat{M}_1}_\infty \cdot \norm{\mat{M}_2}_\infty.\qedhere
        \end{align*}
    \end{itemize}
\end{proof}
Finishing the additional linear algebra, we get from~\cite[2]{michel2025errbndmarkovaggr}
\begin{proposition}\label{prop:1-norm-inequ}
With $v \in \R^n$ and $\mat{M} \in \R^{n \times m}$ it holds true that
\[
    \norm*{v\tran \mat{M}}_1 \leq
    \begin{cases}
        \inp*{\abs*{v}, \abs{\mat{M}} \cdot \mathbf{1}_m}\\
        \norm{v}_1 \cdot \norm{\mat{M}}_\infty
    \end{cases}
\]
\end{proposition}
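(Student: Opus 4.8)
The plan is to bound the $1$-norm of $v^{\mathsf T}\mat{M}$ by expanding everything into coordinate sums, pulling the absolute values inward via the triangle inequality, and then recognizing the two right-hand sides as different ways of grouping the same double sum. First I would write
\[
    \norm*{v\tran \mat{M}}_1 = \sum_{j=1}^m \abs*{\left(v\tran\mat{M}\right)(j)} = \sum_{j=1}^m \abs*{\sum_{i=1}^n v(i)\,\mat{M}(i,j)} \leq \sum_{j=1}^m \sum_{i=1}^n \abs{v(i)}\,\abs{\mat{M}(i,j)},
\]
using the triangle inequality for the absolute value in the inner sum. This double sum is the common ancestor of both bounds.

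For the first case, I would swap the order of summation to get $\sum_{i=1}^n \abs{v(i)} \sum_{j=1}^m \abs{\mat{M}(i,j)}$, and observe that $\sum_{j=1}^m \abs{\mat{M}(i,j)} = \left(\abs{\mat{M}}\cdot\mathbf{1}_m\right)(i)$, so the whole expression equals $\sum_{i=1}^n \abs{v(i)}\left(\abs{\mat{M}}\cdot\mathbf{1}_m\right)(i) = \inp*{\abs{v},\abs{\mat{M}}\cdot\mathbf{1}_m}$, which is exactly the first bound. For the second case, starting again from the same rearranged form $\sum_{i=1}^n \abs{v(i)}\sum_{j=1}^m \abs{\mat{M}(i,j)}$, I would bound each inner sum by its maximum over $i$, namely $\sum_{j=1}^m \abs{\mat{M}(i,j)} \leq \max_{1\leq \ell\leq n}\sum_{j=1}^m \abs{\mat{M}(\ell,j)} = \norm{\mat{M}}_\infty$ by Definition \ref{def:max-row-sum-norm}, and factor it out to obtain $\left(\sum_{i=1}^n \abs{v(i)}\right)\norm{\mat{M}}_\infty = \norm{v}_1\cdot\norm{\mat{M}}_\infty$, the second bound.

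I do not expect any serious obstacle here; the proof is entirely routine once the double sum is written down. The only point requiring the slightest care is the direction of the inequality in the second case — making sure the max is taken after (not before) factoring $\abs{v(i)}$ out, so that the bound $\sum_j \abs{\mat{M}(i,j)} \leq \norm{\mat{M}}_\infty$ is applied uniformly in $i$ before summing against $\abs{v(i)}$. One could alternatively derive the second bound directly from Proposition \ref{prop:inf-norm-is-norm} by treating $v^{\mathsf T}$ as a $1\times n$ matrix and using submultiplicativity together with $\norm{v^{\mathsf T}}_\infty = \norm{v}_1$, but the self-contained coordinate computation is cleaner and keeps both cases parallel.
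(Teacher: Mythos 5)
Your proposal is correct and follows essentially the same route as the paper's own proof: expand $\norm{v\tran\mat{M}}_1$ into the double sum, apply the triangle inequality, swap the order of summation, and then either read off the inner product $\inp*{\abs{v},\abs{\mat{M}}\cdot\mathbf{1}_m}$ directly or bound the inner row sums by $\norm{\mat{M}}_\infty$ to factor out $\norm{v}_1$. The only difference is cosmetic (your index names are swapped relative to the paper), so nothing further is needed.
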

\begin{proof}
    We can simply proceed with calculations, achieving
    \begin{align*}
        \norm*{v\tran \mat{M}}_1 &= \sum_{i = 1}^m \abs*{v \mat{M}(i)} = \sum_{i = 1}^m \abs*{\sum_{j = 1}^n v(j) \mat{M}(j, i)} \leq \sum_{i = 1}^m \sum_{j = 1}^n \abs*{v(j)} \abs*{\mat{M}(j, i)} = \sum_{j = 1}^n \abs*{v(j)} \sum_{i = 1}^m \abs*{\mat{M}(j, i)}\\
        &\begin{cases}
             = \inp*{\abs{v}, \abs{\mat{M}} \cdot \mathbf{1}_m},\\
             \leq \left( \sum_{j = 1}^n \abs*{v(j)} \right) \left( \max_{1 \leq j \leq n} \sum_{i = 1}^m \abs*{\mat{M}(j, i)} \right) \txteq{Def.~\ref{def:max-row-sum-norm}} \norm{v}_1 \cdot \norm{\mat{M}}_\infty.
        \end{cases}\qedhere
    \end{align*}
\end{proof}
Now, equipped with a matrix norm and Proposition~\ref{prop:1-norm-inequ}, we can start looking at how we can define and then bound the error made by an aggregation.
\begin{definition}[Error (vector) after $k$ steps]\label{def:error-k-vec}
Given an arbitrary aggregation, we call $e_k$ the \emph{error vector after $k$ steps}, where
\[
    e_k\tran \coloneqq \tilde{p}_k\tran - p_k\tran \txteq{Def.~\ref{def:approx-trans-distr}, Prop.~\ref{prop:ksteptransdistr}} \pi_0\tran \mat{\Pi}^k \mat{A} - p_0 \mat{P}^k.
\]
Then, $\norm{e_k}_1$ denotes the \emph{error after $k$ steps}.
\end{definition}
Now, we will start by showing a subsidiary proposition analogous to the start of the proof of~\cite[Thrm. 4]{michel2025errbndmarkovaggr}.
\begin{proposition}\label{prop:bounds-support}
In any arbitrary aggregation we have that
\[
    \norm{e_{k + 1}}_1 \leq \norm{e_k}_1 +
    \begin{cases}
        \inp*{\abs{\pi_k}, \abs{\mat{\Pi} \mat{A} - \mat{A} \mat{P}} \cdot \mathbf{1}_n},\\
        \norm{\pi_k}_1 \cdot \norm*{\mat{\Pi} \mat{A} - \mat{A} \mat{P}}_\infty.
    \end{cases}
\]
\end{proposition}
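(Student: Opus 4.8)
The plan is to express the error vector $e_{k+1}$ recursively in terms of $e_k$ and the \enquote{defect matrix} $\mat{\Pi}\mat{A} - \mat{A}\mat{P}$, and then to bound each of the two resulting summands by a direct application of Proposition~\ref{prop:1-norm-inequ}. First I would unfold Definition~\ref{def:error-k-vec} to write $e_{k + 1}\tran = \pi_0\tran\mat{\Pi}^{k + 1}\mat{A} - p_0\tran\mat{P}^{k + 1}$, and then use $\pi_k\tran = \pi_0\tran\mat{\Pi}^k$ (Definition~\ref{def:aggr-trans-distr}, which also holds for $k = 0$) together with $p_k\tran = p_0\tran\mat{P}^k$ (Proposition~\ref{prop:ksteptransdistr}) to rewrite this as $e_{k + 1}\tran = \pi_k\tran\mat{\Pi}\mat{A} - p_k\tran\mat{P}$.

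The key step is then to insert $\pm\,\pi_k\tran\mat{A}\mat{P}$ in order to make both the defect matrix and $e_k$ appear:
\[
    e_{k + 1}\tran = \pi_k\tran(\mat{\Pi}\mat{A} - \mat{A}\mat{P}) + (\pi_k\tran\mat{A} - p_k\tran)\mat{P} = \pi_k\tran(\mat{\Pi}\mat{A} - \mat{A}\mat{P}) + e_k\tran\mat{P},
\]
where the last equality uses $\tilde{p}_k\tran = \pi_k\tran\mat{A}$ (Definition~\ref{def:approx-trans-distr}) and Definition~\ref{def:error-k-vec} once more. Applying the triangle inequality for $\norm{\,\cdot\,}_1$ gives $\norm{e_{k + 1}}_1 \leq \norm{\pi_k\tran(\mat{\Pi}\mat{A} - \mat{A}\mat{P})}_1 + \norm{e_k\tran\mat{P}}_1$.

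It remains to bound the two terms. For the second one I would use the second case of Proposition~\ref{prop:1-norm-inequ} to get $\norm{e_k\tran\mat{P}}_1 \leq \norm{e_k}_1 \cdot \norm{\mat{P}}_\infty = \norm{e_k}_1$, since $\mat{P}$ is a transition matrix and hence has nonnegative entries with each row summing to $1$, so that $\norm{\mat{P}}_\infty = 1$. For the first term I would apply Proposition~\ref{prop:1-norm-inequ} directly with $v \coloneqq \pi_k$ and $\mat{M} \coloneqq \mat{\Pi}\mat{A} - \mat{A}\mat{P} \in \R^{m \times n}$, which yields exactly the two alternatives $\inp{\abs{\pi_k}, \abs{\mat{\Pi}\mat{A} - \mat{A}\mat{P}} \cdot \mathbf{1}_n}$ and $\norm{\pi_k}_1 \cdot \norm{\mat{\Pi}\mat{A} - \mat{A}\mat{P}}_\infty$; combining with the bound on the second term finishes the proof. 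The only genuinely non-mechanical parts are recognizing that $\pi_k\tran\mat{A}\mat{P}$ is the right intermediate term so that the recursion closes, and recalling that $\norm{\mat{P}}_\infty = 1$; the rest is a routine chain of (sub)multiplicativity and triangle-inequality estimates already packaged in Propositions~\ref{prop:inf-norm-is-norm} and~\ref{prop:1-norm-inequ}.
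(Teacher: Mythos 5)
Your proposal is correct and follows essentially the same route as the paper's own proof: the same insertion of $\pm\,\pi_k\tran\mat{A}\mat{P}$ (written there as $\pm\,\pi_0\tran\mat{\Pi}^k\mat{A}\mat{P}$) to split $e_{k+1}\tran$ into $e_k\tran\mat{P} + \pi_k\tran(\mat{\Pi}\mat{A} - \mat{A}\mat{P})$, followed by the triangle inequality, $\norm{\mat{P}}_\infty = 1$, and Proposition~\ref{prop:1-norm-inequ} for the remaining term.
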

\begin{proof}
    This can be shown by computing
    \begin{align*}
        \norm*{e_{k + 1}}_1 &\txteq{Def.~\ref{def:error-k-vec}, Prop.~\ref{prop:ksteptransdistr}} \norm*{\pi_0\tran \mat{\Pi}^{k + 1} \mat{A} - p_0\tran \mat{P}^{k + 1}}_1\\
        &= \norm*{\pi_0\tran \mat{\Pi}^k \mat{A} \mat{P} - p_0\tran \mat{P}^{k + 1} + \pi_0\tran \mat{\Pi}^{k + 1} \mat{A} - \pi_0\tran \mat{\Pi}^k \mat{A} \mat{P}}_1\\
        &= \norm*{ \left( \pi_0\tran \mat{\Pi}^k \mat{A} - p_0\tran \mat{P}^k \right)  \mat{P} + \pi_0\tran \mat{\Pi}^k \left( \mat{\Pi} \mat{A} - \mat{A} \mat{P} \right)}_1\\
        &\txteq{Def.~\ref{def:aggr-trans-distr},~\ref{def:error-k-vec}} \norm*{e_k\tran  \mat{P} + \pi_k\tran \left( \mat{\Pi} \mat{A} - \mat{A} \mat{P} \right)}_1 \leq \norm*{e_k\tran  \mat{P}}_1 + \norm*{\pi_k\tran \left( \mat{\Pi} \mat{A} - \mat{A} \mat{P} \right)}_1\\
        &\txtleq{Prop.~\ref{prop:1-norm-inequ}} \norm*{e_k}_1 \cdot \underbrace{\norm*{\mat{P}}_\infty}_{= 1} + \norm*{\pi_k (\mat{\Pi} \mat{A} - \mat{A} \mat{P})}_1 \txteq{Def.~\ref{def:trns-mtrx}} \norm*{e_k}_1 + \norm*{\pi_k (\mat{\Pi} \mat{A} - \mat{A} \mat{P})}_1\\
        &\txtleq{Prop.~\ref{prop:1-norm-inequ}} \norm*{e_k}_1 +
        \begin{cases}
            \inp*{\abs{\pi_k}, \abs{\mat{\Pi} \mat{A} - \mat{A} \mat{P}} \cdot \mathbf{1}_n},\\
            \norm{\pi_k}_1 \cdot \norm*{\mat{\Pi} \mat{A} - \mat{A} \mat{P}}_\infty.
        \end{cases}\qedhere
    \end{align*}
\end{proof}
Having concluded this auxiliary step, we go on as in~\cite[4--5]{michel2025errbndmarkovaggr}, to show
\begin{proposition}\label{prop:specific-error-bound}
In any arbitrary aggregation we can bound the error after $k$ steps through
\[
    \norm{e_k}_1 \leq \norm{e_0}_1 + \sum_{j = 0}^{k - 1} \inp*{\abs{\pi_j}, \abs{\mat{\Pi} \mat{A} - \mat{A} \mat{P}} \cdot \mathbf{1}_n}.
\]
\end{proposition}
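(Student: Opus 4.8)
The plan is to prove this by induction on $k$, using Proposition~\ref{prop:bounds-support} as the single-step estimate. The base case $k = 0$ is immediate, since the claimed bound reads $\norm{e_0}_1 \leq \norm{e_0}_1$ (the empty sum vanishes).

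For the inductive step, assume the bound holds for some $k$. Applying Proposition~\ref{prop:bounds-support} (taking the first of its two cases, the inner-product bound) gives
\[
    \norm{e_{k+1}}_1 \leq \norm{e_k}_1 + \inp*{\abs{\pi_k}, \abs{\mat{\Pi}\mat{A} - \mat{A}\mat{P}} \cdot \mathbf{1}_n}.
\]
Then I would substitute the induction hypothesis for $\norm{e_k}_1$ and absorb the new term into the sum:
\[
    \norm{e_{k+1}}_1 \leq \norm{e_0}_1 + \sum_{j=0}^{k-1} \inp*{\abs{\pi_j}, \abs{\mat{\Pi}\mat{A} - \mat{A}\mat{P}} \cdot \mathbf{1}_n} + \inp*{\abs{\pi_k}, \abs{\mat{\Pi}\mat{A} - \mat{A}\mat{P}} \cdot \mathbf{1}_n} = \norm{e_0}_1 + \sum_{j=0}^{k} \inp*{\abs{\pi_j}, \abs{\mat{\Pi}\mat{A} - \mat{A}\mat{P}} \cdot \mathbf{1}_n},
\]
which is exactly the claimed bound for $k+1$, closing the induction.

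I do not expect any real obstacle here: the statement is essentially a telescoping/accumulation of the per-step bound already established in Proposition~\ref{prop:bounds-support}, and the only things to be careful about are the index bookkeeping in the sum (the upper limit shifting from $k-1$ to $k$) and invoking the correct branch of the case distinction from Proposition~\ref{prop:bounds-support}. One could equally present the argument non-inductively by writing $e_k = e_0 + \sum_{j=0}^{k-1}(e_{j+1} - e_j)$ and applying the triangle inequality together with the per-step bound, but the induction is cleaner and matches the style of the surrounding propositions.
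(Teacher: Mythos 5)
Your proposal is correct and matches the paper's argument exactly: induction on $k$, with the base case $k=0$ trivial and the inductive step applying the inner-product branch of Proposition~\ref{prop:bounds-support} and then the induction hypothesis to extend the sum from $k-1$ to $k$. The alternative telescoping formulation you mention is equivalent; the paper uses the induction.
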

\begin{proof}
    This is a standard case of infinite induction on $k$.
    The starting case of $k = 0$ is clear.
    As the \ac{IH} we may assume that
    \[
        \norm{e_k}_1 \leq \norm{e_0}_1 + \sum_{j = 0}^{k - 1} \inp*{\abs{\pi_j}, \abs{\mat{\Pi} \mat{A} - \mat{A} \mat{P}} \cdot \mathbf{1}_n}
    \]
    holds for some $k$.
    We then get in the inductive step
    \begin{align*}
        \norm{e_{k + 1}}_1 &\txtleq{Prop~\ref{prop:bounds-support}} \norm*{e_k}_1 + \inp*{\abs{\pi_k}, \abs{\mat{\Pi} \mat{A} - \mat{A} \mat{P}} \cdot \mathbf{1}_n}\\
        &\txtleq{\ac{IH}} \norm{e_0}_1 + \sum_{j = 0}^{k - 1} \inp*{\abs{\pi_j}, \abs{\mat{\Pi} \mat{A} - \mat{A} \mat{P}} \cdot \mathbf{1}_n} + \inp*{\abs{\pi_k}, \abs{\mat{\Pi} \mat{A} - \mat{A} \mat{P}} \cdot \mathbf{1}_n}\\
        &= \norm{e_0}_1 + \sum_{j = 0}^k \inp*{\abs{\pi_j}, \abs{\mat{\Pi} \mat{A} - \mat{A} \mat{P}} \cdot \mathbf{1}_n}.\qedhere
    \end{align*}
\end{proof}
Again, by using Proposition~\ref{prop:bounds-support}, we can also show a less precise bound:
\begin{proposition}\label{prop:general-error-bound}
In any arbitrary aggregation we can bound the error after $k$ steps through
\[
    \norm*{e_k}_1 \leq \norm*{e_0}_1 + \norm*{\pi_0}_1 \cdot \norm*{\mat{\Pi} \mat{A} - \mat{A} \mat{P}}_\infty \cdot
    \begin{cases}
        \frac{\norm{\mat{\Pi}}_\infty^k - 1}{\norm{\mat{\Pi}}_\infty - 1}, & \text{if $\norm{\mat{\Pi}}_\infty \neq 1$},\\
        k, & \text{else.}
    \end{cases}
\]
\end{proposition}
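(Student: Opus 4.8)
The plan is to obtain this coarser estimate from Proposition~\ref{prop:specific-error-bound}, which already provides
\[
    \norm{e_k}_1 \leq \norm{e_0}_1 + \sum_{j = 0}^{k - 1} \inp*{\abs{\pi_j}, \abs{\mat{\Pi} \mat{A} - \mat{A} \mat{P}} \cdot \mathbf{1}_n},
\]
so it suffices to bound each summand by $\norm{\pi_0}_1 \cdot \norm{\mat{\Pi} \mat{A} - \mat{A} \mat{P}}_\infty \cdot \norm{\mat{\Pi}}_\infty^{j}$ and then to evaluate the resulting finite geometric series. Equivalently, one could run an induction on $k$ directly off the second branch of Proposition~\ref{prop:bounds-support}; both routes amount to the same computation.

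First I would observe that, for any $v \in \R^n$ and $\mat{M} \in \R^{n \times m}$, the two right-hand sides of Proposition~\ref{prop:1-norm-inequ} are themselves ordered, since
\[
    \inp*{\abs{v}, \abs{\mat{M}} \cdot \mathbf{1}_m} = \sum_{j = 1}^n \abs*{v(j)} \sum_{i = 1}^m \abs*{\mat{M}(j, i)} \leq \left( \sum_{j = 1}^n \abs*{v(j)} \right) \max_{1 \leq j \leq n} \sum_{i = 1}^m \abs*{\mat{M}(j, i)} = \norm{v}_1 \cdot \norm{\mat{M}}_\infty.
\]
Applying this with $v = \pi_j$ and $\mat{M} = \mat{\Pi} \mat{A} - \mat{A} \mat{P}$ replaces each summand above by $\norm{\pi_j}_1 \cdot \norm{\mat{\Pi} \mat{A} - \mat{A} \mat{P}}_\infty$. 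Next I would bound $\norm{\pi_j}_1$: by Definition~\ref{def:aggr-trans-distr} we have $\pi_j\tran = \pi_0\tran \mat{\Pi}^j$, so the second case of Proposition~\ref{prop:1-norm-inequ} gives $\norm{\pi_j}_1 \leq \norm{\pi_0}_1 \cdot \norm{\mat{\Pi}^j}_\infty$, and the submultiplicativity of $\norm{\,\cdot\,}_\infty$ from Proposition~\ref{prop:inf-norm-is-norm} yields $\norm{\mat{\Pi}^j}_\infty \leq \norm{\mat{\Pi}}_\infty^j$ by a trivial induction (with $\norm{\mat{\Pi}^0}_\infty = \norm{\mat{I}}_\infty = 1$).

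Putting the pieces together gives
\[
    \norm{e_k}_1 \leq \norm{e_0}_1 + \norm{\pi_0}_1 \cdot \norm{\mat{\Pi} \mat{A} - \mat{A} \mat{P}}_\infty \sum_{j = 0}^{k - 1} \norm{\mat{\Pi}}_\infty^{j},
\]
and it remains to evaluate the finite geometric sum $\sum_{j = 0}^{k - 1} \norm{\mat{\Pi}}_\infty^{j}$, which equals $\frac{\norm{\mat{\Pi}}_\infty^k - 1}{\norm{\mat{\Pi}}_\infty - 1}$ when $\norm{\mat{\Pi}}_\infty \neq 1$ and equals $k$ otherwise, giving the claimed bound. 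There is no genuinely hard step here; the only points needing a little care are the case distinction in the geometric-series formula and the degenerate boundary $k = 0$, where the sum is empty and both cases collapse to $\norm{e_0}_1 \leq \norm{e_0}_1$.
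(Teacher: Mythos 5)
Your proof is correct, and it takes a slightly different route than the paper. The paper proves the bound by a fresh induction on $k$, applying the second branch of Proposition~\ref{prop:bounds-support}, estimating $\norm{\pi_k}_1 \leq \norm{\pi_0}_1 \cdot \norm{\mat{\Pi}}_\infty^k$ via Propositions~\ref{prop:inf-norm-is-norm} and~\ref{prop:1-norm-inequ}, and folding the new term into the geometric expression inside the induction step. You instead derive the statement as a corollary of Proposition~\ref{prop:specific-error-bound}: you observe that the two right-hand sides in Proposition~\ref{prop:1-norm-inequ} are themselves ordered, so each inner-product summand is dominated by $\norm{\pi_j}_1 \cdot \norm{\mat{\Pi}\mat{A} - \mat{A}\mat{P}}_\infty \leq \norm{\pi_0}_1 \cdot \norm{\mat{\Pi}}_\infty^j \cdot \norm{\mat{\Pi}\mat{A} - \mat{A}\mat{P}}_\infty$, and then you sum the finite geometric series. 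Both arguments use the same estimates; what your version buys is the explicit structural remark that the bound of Proposition~\ref{prop:specific-error-bound} is always at least as sharp as the one proved here, so the coarser bound needs no separate induction, while the paper's version is self-contained and parallels the proof of Proposition~\ref{prop:specific-error-bound} without relying on it. Your handling of the case distinction and of $k = 0$ is fine.
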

\begin{proof}
    For a formal proof, we proceed by induction on $k \in \N$.
    The starting case of $k = 0$ is clear, as we have
    \[
        \norm*{e_0}_1 \leq \norm*{e_0}_1 + 0 = \norm*{e_0}_1 + \norm*{\pi_0}_1 \cdot \norm*{\mat{\Pi} \mat{A} - \mat{A} \mat{P}}_\infty \cdot
        \begin{cases}
            \frac{\norm{\mat{\Pi}}_\infty^0 - 1}{\norm{\mat{\Pi}}_\infty - 1}, & \text{if $\norm{\mat{\Pi}}_\infty \neq 1$},\\
            0, & \text{else.}
        \end{cases}
    \]
    Then, assume for the \ac{IH} with fixed $k \in \N$, that we have
    \[
        \norm*{e_k}_1 \leq \norm*{e_0}_1 + \norm*{\pi_0}_1 \cdot \norm*{\mat{\Pi} \mat{A} - \mat{A} \mat{P}}_\infty \cdot
        \begin{cases}
            \frac{\norm{\mat{\Pi}}_\infty^k - 1}{\norm{\mat{\Pi}}_\infty - 1}, & \text{if $\norm{\mat{\Pi}}_\infty \neq 1$},\\
            k, & \text{else.}
        \end{cases}
    \]
    The following inductive step is concluded by calculating
    \begin{align*}
        \norm*{e_{k + 1}}_1 &\txtleq{Prop.~\ref{prop:bounds-support}} \norm*{e_k}_1 + \norm*{\pi_k}_1 \cdot \norm*{\mat{\Pi} \mat{A} - \mat{A} \mat{P}}_\infty\\
        &\txtleq{Def.~\ref{def:aggr-trans-distr}, Prop.~\ref{prop:inf-norm-is-norm},~\ref{prop:1-norm-inequ}} \norm*{e_k}_1 + \norm*{\pi_0}_1 \cdot \norm*{\mat{\Pi}}_\infty^k \cdot \norm*{\mat{\Pi} \mat{A} - \mat{A} \mat{P}}_\infty\\
        &\txtleq{\ac{IH}} \norm*{e_0}_1 + \norm*{\pi_0}_1 \cdot \norm*{\mat{\Pi} \mat{A} - \mat{A} \mat{P}}_\infty \cdot
        \begin{cases}
            \frac{\norm{\mat{\Pi}}_\infty^k - 1}{\norm{\mat{\Pi}}_\infty - 1} + \norm*{\mat{\Pi}}_\infty^k, & \text{if $\norm{\mat{\Pi}}_\infty \neq 1$},\\
            k + \norm*{\mat{\Pi}}_\infty^k, & \text{else,}
        \end{cases}\\
        &= \norm*{e_0}_1 + \norm*{\pi_0}_1 \cdot \norm*{\mat{\Pi} \mat{A} - \mat{A} \mat{P}}_\infty \cdot
        \begin{cases}
            \frac{\norm{\mat{\Pi}}_\infty^k - 1}{\norm{\mat{\Pi}}_\infty - 1} + \frac{\norm*{\mat{\Pi}}_\infty^k \left( \norm{\mat{\Pi}}_\infty - 1 \right)}{\norm{\mat{\Pi}}_\infty - 1}, & \text{if $\norm{\mat{\Pi}}_\infty \neq 1$},\\
            k + 1, & \text{else,}
        \end{cases}\\
        &= \norm*{e_0}_1 + \norm*{\pi_0}_1 \cdot \norm*{\mat{\Pi} \mat{A} - \mat{A} \mat{P}}_\infty \cdot
        \begin{cases}
            \frac{\norm{\mat{\Pi}}_\infty^{k + 1} - 1}{\norm{\mat{\Pi}}_\infty - 1}, & \text{if $\norm{\mat{\Pi}}_\infty \neq 1$},\\
            k + 1, & \text{else.}
        \end{cases}\qedhere
    \end{align*}
\end{proof}

\section{Arnoldi iteration}\label{sec:arnoldi-iteration}
If we now want an aggregation of a Markov chain where $p_k$ and $\tilde{p}_k$ match, we can see that $p_k\tran$ has to be in the vector space spanned by the rows of the disaggregation matrix $\mat{A}$ for all $k \in \N$.
Choosing elements of $\spn\set{p_0\tran, p_1\tran, \dots}$ as the rows of $\mat{A}$ seems practical.
The so-called Arnoldi iteration fulfils this criterion while providing further amenities, which we will see.
With it, we can find a basis of $\spn\set{p_0\tran, p_1\tran, \dots}$, for which we further need the Gram-Schmidt procedure as it enables us to compute a basis where all elements are pairwise orthogonal with length one.
Importantly, such bases are numerically well-behaved.

The definitions in this section regarding the two Gram-Schmidt procedures and their preliminaries are taken from~\cite[75, 324]{beilina2017numlinalg}.
\begin{definition}[Orthonormal] \label{def:orthonormal}
We call a set of vectors $\set{v_1, \dots, v_n}$ in a vector space $V$ with some inner product $\inp{\cdot, \cdot}$ \emph{orthonormal} if, and only if,
\[
    \forall j, k \in \set{1, \dots, n} : \inp{v_j, v_k} =
    \begin{cases}
        1, & \text{if}\ j = k,\\
        0, & \text{if}\ j \neq k.
    \end{cases}
\]
\end{definition}
This concept allows us now to make sense of the first two algorithms, two variants of the Gram-Schmidt procedure.
We will see that they are equivalent, meaning, in theory, the same input yields the same output.
Still, in practical applications, it makes sense to differentiate between these two, as we will see later on.
\begin{algorithm}[H]
    \caption{Classical Gram-Schmidt procedure}\label{alg:classical-gs}
    \begin{algorithmic}[1]
        \State Let $n \in \N \setminus \set{0}$ and $v_1, \dots, v_n \in \R^m$ linearly independent
        \ForAll{$k = 1, \dots, n$}
            \State $q_k \coloneqq v_k$
            \ForAll{$j = 1, \dots, k - 1$}
                \State $r_{j,k} \coloneqq \inp*{q_j, v_k}$
                \State $q_k = q_k - r_{j,k} q_j$
            \EndFor
            \State $r_{k,k} \coloneqq \norm{q_k}_2$
            \State $q_k = \frac{q_k}{r_{k,k}}$
        \EndFor
        \State \Return $q_1, \dots q_n$
    \end{algorithmic}
\end{algorithm}
\begin{proposition}\label{prop:closed-form-cgs}
In Algorithm~\ref{alg:classical-gs}, after returning, we have for $i \in \N$, $1 \leq i \leq n$ that
\[
    q_i = \frac{v_i - \sum_{j = 1}^{i - 1} \inp{q_j, v_i} q_j}{\norm{v_i - \sum_{j = 1}^{i - 1} \inp{q_j, v_i} q_j}_2}.
\]
\end{proposition}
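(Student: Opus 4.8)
The plan is to trace through the execution of Algorithm~\ref{alg:classical-gs} and unroll its inner loop. The one subtlety — and essentially the entire content of the proposition — is that the coefficient $r_{j,k}$ is computed as $\inp{q_j, v_k}$ against the \emph{original} input vector $v_k$, not against the partially updated vector currently stored in the variable $q_k$; this is precisely what distinguishes the classical from the modified procedure. As a consequence, the inner loop over $j$ only ever subtracts fixed multiples of the already-computed vectors $q_1, \dots, q_{k-1}$ from $v_k$, so no interaction between successive updates has to be accounted for.

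Concretely, I would fix an index $k$ with $1 \le k \le n$, write $q_k^{(0)} \coloneqq v_k$ for the value assigned to $q_k$ before the inner loop starts, and let $q_k^{(t)}$ denote the value held in $q_k$ after the $t$-th pass through the inner loop. The two inner-loop assignments give the recurrence
\[
    q_k^{(t)} = q_k^{(t-1)} - r_{t,k}\, q_t = q_k^{(t-1)} - \inp{q_t, v_k}\, q_t,
\]
where I use that $v_k$ is never modified and that $q_t$ has already attained its final value during the $t$-th (hence earlier) iteration of the outer loop. A one-line induction on $t$ then yields
\[
    q_k^{(k-1)} = v_k - \sum_{j=1}^{k-1} \inp{q_j, v_k}\, q_j.
\]
The normalization assignments $r_{k,k} \coloneqq \norm{q_k}_2$ and $q_k = q_k / r_{k,k}$ replace $q_k$ by $q_k^{(k-1)} / \norm{q_k^{(k-1)}}_2$, and since $q_k$ is not touched again afterwards, this is exactly the vector returned. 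Substituting the displayed expression for $q_k^{(k-1)}$ into both numerator and denominator gives the claimed formula.

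The only point that needs a word of care beyond this bookkeeping is that the denominator $\norm{q_k^{(k-1)}}_2 = r_{k,k}$ is nonzero, so that the division is well defined; this follows from the hypothesis that $v_1, \dots, v_n$ are linearly independent, since $q_k^{(k-1)}$ equals $v_k$ minus a linear combination of vectors lying in $\spn\set{v_1, \dots, v_{k-1}}$ and therefore cannot be the zero vector. I do not expect any genuine obstacle here: the statement is a direct rewriting of the loop body in closed form, and the whole proof amounts to the short induction over the inner loop above, optionally wrapped in an outer induction on $k$ if one also wants the $q_t$ appearing in the recurrence to already be in the asserted closed form.
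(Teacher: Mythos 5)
Your proposal is correct and follows essentially the same route as the paper: unrolling the inner loop while noting that the coefficients $r_{j,k}=\inp{q_j,v_k}$ are taken against the unmodified input $v_k$ and that each $q_j$, $j<k$, is already final, then applying the normalization in the last two lines. Your additional remark on the denominator being nonzero is a welcome bonus; the paper defers that well-definedness issue to the proof of Proposition~\ref{prop:CGS-orthonorm-basis}.
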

\begin{proof}
    We can see this by unfolding the pseudocode inside the outer for-loop.
    Initially, we set $q_i = v_i$ to then subtract some term from it, before normalizing it at the end of the for-loop.
    Further, any specific $q_i$ is only ever modified in the singular pass of the outer for-loop with $k = i$.
    So we need to check now, whether this term subtracted from $v_i$ is of the correct form.
    As addition is associative, we can at first sum up the $r_{j,k}$ before subtracting this sum from $v_i$.
    So we get $q_i = v_i - \sum_{j = 1}^{i - 1} \inp{q_j, v_i} q_j$ after the inner for-loop has passed.
    The remaining lines 7 and 8 normalize the above described $q_i$, resulting in the initially proposed value for $q_i$.
\end{proof}
\begin{proposition}\label{prop:CGS-orthonorm-basis}
For a linearly independent input $v_1, \dots, v_n \in \R^m$, the \ac{CGS} returns an output $q_1, \dots, q_n$ where with $i \in \N$, $1 \leq i \leq n$ the vectors $q_1, \dots, q_i$ form an orthonormal basis of $\spn\set{v_1, \dots, v_i}$.
\end{proposition}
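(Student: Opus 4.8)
The plan is to induct on $i$, using the closed form for $q_i$ from Proposition~\ref{prop:closed-form-cgs}. For the base case $i = 1$ the inner sum in that closed form is empty, so $q_1 = v_1/\norm{v_1}_2$; since the input is linearly independent, $v_1 \neq \mathbf{0}$, so this is well defined, has unit norm, and spans the same one-dimensional subspace as $v_1$. A singleton is trivially orthonormal, so the claim holds for $i = 1$.

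For the inductive step I would assume that $q_1, \dots, q_{i-1}$ form an orthonormal basis of $\spn\set{v_1, \dots, v_{i-1}}$ and abbreviate the (un-normalized) numerator from Proposition~\ref{prop:closed-form-cgs} as $w \coloneqq v_i - \sum_{j=1}^{i-1} \inp{q_j, v_i} q_j$. The first thing to check is that $w \neq \mathbf{0}$: otherwise $v_i = \sum_{j=1}^{i-1} \inp{q_j, v_i} q_j \in \spn\set{q_1, \dots, q_{i-1}} = \spn\set{v_1, \dots, v_{i-1}}$ by the induction hypothesis, contradicting the linear independence of $v_1, \dots, v_i$. Hence $q_i = w/\norm{w}_2$ is well defined with $\norm{q_i}_2 = 1$.

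Next I would verify orthogonality: for $1 \leq k \leq i-1$, expanding bilinearly and using the orthonormality of $q_1, \dots, q_{i-1}$ gives $\inp{q_k, w} = \inp{q_k, v_i} - \sum_{j=1}^{i-1} \inp{q_j, v_i}\inp{q_k, q_j} = \inp{q_k, v_i} - \inp{q_k, v_i} = 0$, so $\inp{q_k, q_i} = 0$ as well. Together with the induction hypothesis, $\set{q_1, \dots, q_i}$ is orthonormal, hence linearly independent. For the span, note each of $q_1, \dots, q_{i-1}$ lies in $\spn\set{v_1, \dots, v_{i-1}}$ and $q_i$ is a linear combination of $v_i$ and those vectors, so $\spn\set{q_1, \dots, q_i} \subseteq \spn\set{v_1, \dots, v_i}$; since the left side has dimension $i$ (the $q$'s being linearly independent) and the right side also has dimension $i$ (the $v$'s being linearly independent), the inclusion is an equality, and $q_1, \dots, q_i$ is an orthonormal basis of $\spn\set{v_1, \dots, v_i}$.

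The only genuine obstacle is the non-vanishing of $w$, which is exactly where linear independence is indispensable; everything else is routine bilinear-form bookkeeping and a dimension count. One subtlety worth handling carefully is that Proposition~\ref{prop:closed-form-cgs} already presupposes the algorithm terminates without a division by zero in line~8 — so in a fully rigorous write-up the statement $w \neq \mathbf{0}$ should be carried through the induction alongside the orthonormality and span claims rather than invoked after the fact.
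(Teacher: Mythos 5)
Your proposal is correct and follows essentially the same route as the paper: induction on $i$ using the closed form from Proposition~\ref{prop:closed-form-cgs}, non-vanishing of the un-normalized vector via linear independence, bilinear expansion for orthogonality, and the span equality (which you justify a bit more explicitly with a dimension count, and you add a fair remark about carrying well-definedness through the induction). No substantive difference from the paper's argument.
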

\begin{proof}
    We will be using the equivalent closed-form formula from Proposition~\ref{prop:closed-form-cgs} to proceed by finite induction on $i$.
    In the starting case of $i = 1$, we have $q_1 = \frac{v_1}{\norm{v_1}_2}$.
    Thus, we have $\spn\set{q_1} = \spn\set{v_1}$ holds, and $q_1$ alone is also orthonormal.
    For the \ac{IH}, assume that $q_1, \dots, q_i$ is orthonormal and $\spn\set{q_1, \dots, q_i} = \spn\set{v_1, \dots, v_i}$ with $i \leq n - 1$.
    As $v_1, \dots, v_n$ are linearly independent, we can conclude that $v_1, \dots, v_i$ are also.
    So we get
    \[
        v_{i + 1} \notin \spn\set{v_1, \dots v_i} = \spn\set{q_1, \dots q_i},
    \]
    resulting in
    \[
        v_{i + 1} - \underbrace{\sum_{j = 1}^i \inp{v_{i + 1}, q_j} q_j}_{\in \spn\set{q_1, \dots q_i}} \neq 0.
    \]
    Thus, $q_{i + 1}$ is well-defined and
    \[
        \spn\set{v_1, \dots v_i, v_{i + 1}} = \spn\set{q_1, \dots q_i, q_{i + 1}},
    \]
    concluding the inductive step here.
    The inductive step for orthonormality is still necessary.
    As $\norm{q_{i + 1}}_2^2 = \inp{q_{i + 1}, q_{i + 1}} \txteq{Def.~\ref{def:orthonormal}} 1$, we look at orthogonality.
    Choose $\ell \in \set{1, \dots, i}$:
    \begin{align*}
        \inp{q_{i + 1}, q_\ell} &= \inp*{\frac{v_{i + 1} - \sum_{j = 1}^i \inp{v_{i + 1}, q_j} q_j}{\norm{v_{i + 1} - \sum_{j = 1}^i \inp{v_{i + 1}, q_j} q_j}_2}, q_\ell}\\
        &= \frac{1}{\norm{v_{i + 1} - \sum_{j = 1}^i \inp{v_{i + 1}, q_j} q_j}_2} \inp*{v_{i + 1} - \sum_{j = 1}^i \inp{v_{i + 1}, q_j} q_j, q_\ell}\\
        &= \frac{1}{\norm{v_{i + 1} - \sum_{j = 1}^i \inp{v_{i + 1}, q_j} q_j}_2} \left( \inp{v_{i + 1}, q_\ell} - \sum_{j = 1}^i \inp{v_{i + 1}, q_j} \inp{q_j, q_\ell} \right)\\
        &\txteq{\ac{IH}, Def.~\ref{def:orthonormal}} \frac{1}{\norm{v_{i + 1} - \sum_{j = 1}^i \inp{v_{i + 1}, q_j} q_j}_2} \left( \inp{v_{i + 1}, q_\ell} - \inp{v_{i + 1}, q_\ell} \right) = 0.\qedhere
    \end{align*}
\end{proof}
Now, by changing the order of calculation in the classical Gram-Schmidt procedure, we get a very similar algorithm, known as the \ac{MGS}:
\begin{algorithm}[H]
    \caption{Modified Gram-Schmidt procedure}\label{alg:modified-gs}
    \begin{algorithmic}[1]
        \State Let $n \in \N \setminus \set{0}$ and $v_1, \dots, v_n \in \R^m$ linearly independent
        \ForAll{$k = 1, \dots, n$}
            \State $r_{k,k} \coloneqq \norm{v_k}_2$
            \State $q_k \coloneqq \frac{v_k}{r_{k,k}}$
            \ForAll{$j = k + 1, \dots, n$}
                \State $r_{k,j} \coloneqq \inp*{q_k, v_j}$
                \State $v_j = v_j - r_{k,j} q_k$
            \EndFor
        \EndFor
        \State \Return $q_1, \dots, q_n$
    \end{algorithmic}
\end{algorithm}
\begin{proposition}\label{prop:cgs-equiv-mgs}
\Ac{CGS} and \ac{MGS} return the exact same output for the same linearly independent input $v_1, \dots, v_n \in \R^m$.
\end{proposition}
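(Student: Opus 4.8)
The plan is to track the in-place modifications that \ac{MGS} performs on its input vectors and to show, by induction on the outer loop index, that the $k$-th vector it normalizes is exactly the vector whose normalization gives the $k$-th output of \ac{CGS} via the closed form of Proposition~\ref{prop:closed-form-cgs}. Write $v_1^{(0)}, \dots, v_n^{(0)}$ for the input and, for $0 \le k < j \le n$, let $v_j^{(k)}$ denote the value stored in the variable $v_j$ after the $k$-th pass of the outer loop of \ac{MGS} has finished (so $v_j^{(j-1)}$ is the value from which $q_j$ is built). The induction hypothesis $S(k)$ will assert two things: first, that $q_1, \dots, q_k$ already coincide with the first $k$ outputs of \ac{CGS} on the same input — and hence, by Proposition~\ref{prop:CGS-orthonorm-basis}, form an orthonormal system; second, that $v_j^{(k)} = v_j^{(0)} - \sum_{i=1}^{k} \inp{q_i, v_j^{(0)}} q_i$ for every $j$ with $k < j \le n$.

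For the base case $k = 0$ both claims are vacuous (the empty sum gives $v_j^{(0)}$). For the inductive step, assume $S(k-1)$. At the start of the $k$-th outer pass the variable $v_k$ holds $v_k^{(k-1)} = v_k^{(0)} - \sum_{i=1}^{k-1} \inp{q_i, v_k^{(0)}} q_i$, which is nonzero by the well-definedness argument inside the proof of Proposition~\ref{prop:CGS-orthonorm-basis}, so $q_k = v_k^{(k-1)} / \norm{v_k^{(k-1)}}_2$ is precisely the expression of Proposition~\ref{prop:closed-form-cgs}; thus $q_k$ equals the $k$-th \ac{CGS} output, and $q_1, \dots, q_k$ is orthonormal by Proposition~\ref{prop:CGS-orthonorm-basis}. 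For the second clause, fix $j > k$; the inner loop performs the single assignment $v_j \leftarrow v_j^{(k-1)} - \inp{q_k, v_j^{(k-1)}} q_k$. Using $S(k-1)$ and the orthonormality of $q_1, \dots, q_k$, one computes $\inp{q_k, v_j^{(k-1)}} = \inp{q_k, v_j^{(0)}} - \sum_{i=1}^{k-1} \inp{q_i, v_j^{(0)}} \inp{q_k, q_i} = \inp{q_k, v_j^{(0)}}$, so $v_j^{(k)} = v_j^{(0)} - \sum_{i=1}^{k} \inp{q_i, v_j^{(0)}} q_i$, establishing $S(k)$. Once the outer loop terminates, the first clause applied at each stage yields that $q_1, \dots, q_n$ are exactly the \ac{CGS} outputs, which is the claim.

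The only genuinely delicate point is the bookkeeping of the in-place updates together with the observation that lets one replace the sequentially computed coefficients $r_{k,j} = \inp{q_k, v_j^{(k-1)}}$ by the one-shot inner products $\inp{q_k, v_j^{(0)}}$ against the original vectors; this replacement is exactly what makes the \ac{MGS} subtraction step line up with the single sum in Proposition~\ref{prop:closed-form-cgs}, and it relies on having already established orthonormality of the $q_i$ produced so far (hence the two clauses of $S(k)$ must be carried jointly). Everything else — the base case, the normalization step, and the reindexing of the sums — is routine.
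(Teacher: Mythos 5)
Your proof is correct and uses the same two ingredients as the paper's: the closed form from Proposition~\ref{prop:closed-form-cgs} and the invariant $v_j^{(k)} = v_j - \sum_{i=1}^{k}\inp{q_i, v_j}q_i$, which is exactly the paper's identity~(\ref{eq:partial-gs}). The only difference is organizational — you run a single loop-invariant induction on the outer index $k$, carrying orthonormality and the invariant jointly, whereas the paper inducts on the input size $n$ and derives the same identity in an auxiliary inner induction — and your arrangement is, if anything, slightly tidier, since it avoids the paper's implicit appeal to the fact that appending an extra input vector does not change the first $n$ outputs.
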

\begin{proof}
    Denote the new value of $v_j$ after the $k$-th iteration of the outer for loop with $v_j^{(k)}$.
    We will proceed by induction on the in- and output size $n \in \N$.
    The case $n = 1$ is clear, as in both Algorithm~\ref{alg:classical-gs} and~\ref{alg:modified-gs}, we just have that $q_1 = \frac{v_1}{\norm{v_1}_2}$.
    For the \ac{IH}, assume that for $n \in \N$ being the input size of both Algorithm~\ref{alg:classical-gs} and~\ref{alg:modified-gs}, their respective outputs are the same.
    Thus, we can apply Proposition~\ref{prop:closed-form-cgs} and~\ref{prop:CGS-orthonorm-basis} to the size $n$ output of \ac{MGS}.

    Now as an auxiliary result, using the above \ac{IH}, we will show for $i \in \N$, $1 \leq i \leq n + 1$ and $k \in \N$, $1 \leq k \leq i - 1$ that
    \begin{equation}
        v_i^{(k)} = v_i - \sum_{\ell = 1}^k \inp*{q_\ell, v_i} q_\ell.\label{eq:partial-gs}
    \end{equation}
    The base case of $k = 1$ is done with $v_i^{(1)} = v_i - \inp*{q_1, v_i} q_1$.
    As the \ac{IH} of this finite induction, set $v_i^{(k)} = v_i - \sum_{\ell = 1}^k \inp*{q_\ell, v_i} q_\ell$ for $k \in \N$, $1 \leq k \leq i - 2$.
    Through line 6 and 7 of \ac{MGS} we get
    \begin{align*}
        v_i^{(k + 1)} &= v_i^{(k)} - q_{k + 1}\tran v_i^{(k)} q_{k + 1}\\
        &\txteq{Aux. \ac{IH}} v_i - \sum_{\ell = 1}^k \inp*{q_\ell, v_{i + 1}} q_\ell - q_{k + 1}\tran \left( v_i - \sum_{\ell = 1}^k \inp*{q_\ell, v_i} q_\ell \right) q_{k + 1}\\
        &= v_i - \sum_{\ell = 1}^k \inp*{q_\ell, v_i} q_\ell - \inp*{q_{k + 1}, v_i} q_{k + 1} - \sum_{\ell = 1}^k q_{k + 1}\tran \inp*{q_\ell, v_i} q_\ell q_{k + 1}\\
        &= v_i - \sum_{\ell = 1}^k \inp*{q_\ell, v_i} q_\ell - \inp*{q_{k + 1}, v_i} q_{k + 1} - \sum_{\ell = 1}^k \inp*{q_\ell, v_i} \underbrace{\inp*{q_{k + 1}, q_\ell}}_{= 0} q_{k + 1}\\
        &\txteq{Main \ac{IH}, Prop.~\ref{prop:CGS-orthonorm-basis}} v_i - \sum_{\ell = 1}^{k + 1} \inp*{q_\ell, v_i} q_\ell,
    \end{align*}
    finishing up this auxiliary result.

    Now we can finally determine $q_{n + 1}$.
    As this is fully determined by $v_{n + 1}^{(n)}$, we use the auxiliary result~(\ref{eq:partial-gs}), giving us
    \[
        v_{n + 1}^{(n)} = v_{n + 1} - \sum_{\ell = 1}^n \inp*{q_\ell, v_{n + 1}} q_\ell.
    \]
    Combining this with
    \[
        q_{n + 1} = \frac{v_{n + 1}^{(n)}}{\norm{v_{n + 1}^{(n)}}_2}
    \]
    from lines 3 and 4, concludes this proof.
\end{proof}
\begin{corollary}\label{cor:MGS-orthonorm-basis}
For a linearly independent input $v_1, \dots, v_n \in \R^m$, \Ac{MGS} returns $q_1, \dots, q_n$ where the vectors $q_1, \dots, q_i$ form an orthonormal basis of $\spn\set{v_1, \dots, v_i}$.
\end{corollary}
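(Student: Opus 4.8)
The plan is to obtain this statement as an immediate consequence of the two preceding results, with no fresh computation or induction required. Proposition~\ref{prop:CGS-orthonorm-basis} already asserts exactly the desired conclusion for the output of \ac{CGS}: for a linearly independent input $v_1,\dots,v_n$ and each $i$ with $1 \leq i \leq n$, the first $i$ returned vectors $q_1,\dots,q_i$ form an orthonormal basis of $\spn\set{v_1,\dots,v_i}$. Proposition~\ref{prop:cgs-equiv-mgs} then tells us that, fed the same linearly independent input, \ac{MGS} returns the very same tuple $q_1,\dots,q_n$. Hence the property transfers verbatim.

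Concretely, I would phrase the proof in one or two sentences: let $v_1,\dots,v_n \in \R^m$ be linearly independent, let $q_1,\dots,q_n$ be the output of \ac{MGS} on this input, and let $q_1',\dots,q_n'$ be the output of \ac{CGS} on the same input. By Proposition~\ref{prop:cgs-equiv-mgs} we have $q_\ell = q_\ell'$ for all $\ell \in \set{1,\dots,n}$. Fix $i$ with $1 \leq i \leq n$; by Proposition~\ref{prop:CGS-orthonorm-basis}, $q_1',\dots,q_i'$ is an orthonormal basis of $\spn\set{v_1,\dots,v_i}$, and substituting $q_\ell$ for $q_\ell'$ gives the claim for \ac{MGS}.

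Since both cited propositions are already proved in the excerpt and may be assumed, there is essentially no obstacle here; the only thing to be careful about is making sure the hypothesis (linear independence of the input) matches in all three statements, which it does, and that the quantifier over $i$ is handled uniformly rather than smuggled into an unnecessary induction. I would keep the proof to two or three lines and not restate the arguments of Propositions~\ref{prop:CGS-orthonorm-basis} and~\ref{prop:cgs-equiv-mgs}.
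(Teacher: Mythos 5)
Your proposal is correct and follows exactly the paper's own argument: the paper likewise proves the corollary by combining Proposition~\ref{prop:cgs-equiv-mgs} with Proposition~\ref{prop:CGS-orthonorm-basis}, transferring the orthonormal-basis property from the \ac{CGS} output to the identical \ac{MGS} output. Your slightly more explicit phrasing (naming both output tuples and fixing $i$) is fine and adds nothing beyond what the paper's one-line proof already conveys.
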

\begin{proof}
    This proof follows by applying Proposition~\ref{prop:cgs-equiv-mgs} to Proposition~\ref{prop:CGS-orthonorm-basis}.
\end{proof}
For the upcoming part about the Arnoldi iteration, the preliminary definition and the definition of the Arnoldi iteration are taken from~\cite[164,173--174]{arbenz2016lecturenotes}.
Again, we only made minor changes, such as renaming and transposing, for notational ease later on.
\begin{definition}[Krylov subspace]\label{def:krylov-subspace}
Let $\F$ be a field with $x \in \F^n$ and $\mat{M} \in \F^{n \times n}$.
We call
\[\mathcal{K}^m(x, \mat{M}) \coloneqq \spn\Set{x\tran, x\tran \mat{M}, x\tran \mat{M}^2, \dots, x\tran \mat{M}^{m - 1}}\]
a \emph{Krylov subspace} with respect to $x$ and $\mat{M}$.
\end{definition}
The Arnoldi iteration aims to compute an orthonormal basis of $\mathcal{K}^{k + 1}(x, \mat{M})$.
This is done by simply using the modified Gram-Schmidt procedure to iteratively go through $x\tran \mat{M}^j$ with increasing $j$ until we either have $k + 1$ basis elements and are done, or we terminate sooner if we have found an invariant subspace.
Along the way, we collect some supplementary terms we need to compute anyway, so they can be used later for an interesting relation relevant to our application of aggregating Markov chains exactly.
\begin{algorithm}[H]
    \caption{Arnoldi Iteration}\label{alg:arnoldi-iteration}
    \begin{algorithmic}[1]
        \State Let $m \in \N$ and $x \in \R^n$, $\mat{M} \in \R^{n \times n}$.
        \State $q_1 \coloneqq \frac{x}{\norm{x}_2}$
        \For{$j = 1, \dots, m$}
            \State $r_1 \coloneqq q_j\tran \mat{M}$
            \ForAll{$i = 1, \dots, j$}
                \State $h_{j, i} \coloneqq \inp{r_i\tran, q_i}$
                \State $r_{i + 1} \coloneqq r_i - h_{j, i} q_i\tran$
            \EndFor
            \State $h_{j, j + 1} \coloneqq \norm{r_{j + 1}}_2$
            \If{$h_{j, j + 1} = 0$}
                \Return $q_1, \dots, q_j,\ h_{1,1}, \dots, h_{j, j}$
            \EndIf
            \State $q_{j + 1} \coloneqq \frac{r_{j + 1}\tran}{h_{j, j + 1}}$
        \EndFor
        \State \Return $q_1, \dots, q_{m + 1},\ h_{1, 1}, \dots, h_{m, m + 1}$
    \end{algorithmic}
\end{algorithm}
\begin{proposition}\label{prop:arnoldi-output-orthonorm-krylov}
At line 8 in Algorithm~\ref{alg:arnoldi-iteration}, we have that $q_1\tran, \dots, q_j\tran$ is an orthonormal basis of $\mathcal{K}^j(x, \mat{M})$.
\end{proposition}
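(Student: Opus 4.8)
The plan is to prove the statement by induction on $j$, reusing the analysis of the modified Gram-Schmidt procedure that has already been established. The key observation is that the inner loop of Algorithm~\ref{alg:arnoldi-iteration} (lines 5--7), applied to the vector $r_1 = q_j\tran\mat{M}$, performs exactly the orthogonalization steps of \ac{MGS} against the already-computed orthonormal vectors $q_1\tran,\dots,q_j\tran$. So at line 8 the vector $r_{j+1}$ is $q_j\tran\mat{M}$ with its projections onto $q_1\tran,\dots,q_j\tran$ removed, and $q_{j+1}\tran = r_{j+1}\tran/\norm{r_{j+1}}_2$ is its normalization. I want to show two things simultaneously: (i) $q_1\tran,\dots,q_j\tran$ is orthonormal, and (ii) it spans $\mathcal{K}^j(x,\mat{M})$.

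\begin{proof}
We proceed by induction on $j$. For the base case $j = 1$, we reach line 8 during the first pass of the outer loop, at which point only $q_1\tran = x\tran/\norm{x}_2$ has been defined. A single unit vector is trivially orthonormal, and $\spn\set{q_1\tran} = \spn\set{x\tran} = \mathcal{K}^1(x,\mat{M})$ by Definition~\ref{def:krylov-subspace}.

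For the inductive step, suppose that when we reached line 8 in the pass with outer index $j$, the vectors $q_1\tran,\dots,q_j\tran$ formed an orthonormal basis of $\mathcal{K}^j(x,\mat{M})$; we must show the same for $q_1\tran,\dots,q_{j+1}\tran$ in the pass with index $j+1$. First, since $h_{j,j+1} = \norm{r_{j+1}}_2 \neq 0$ (otherwise the algorithm would have returned at line 9 and never produced $q_{j+1}$), the vector $q_{j+1}\tran$ is well-defined. Unfolding lines 4--7 shows, exactly as in the analysis of the inner loop of \ac{MGS} in Corollary~\ref{cor:MGS-orthonorm-basis}, that
\[
    r_{j+1} = q_j\tran\mat{M} - \sum_{i=1}^{j} \inp{ (q_j\tran\mat{M})\tran, q_i } \, q_i\tran,
\]
so $q_{j+1}\tran$ is the normalized component of $q_j\tran\mat{M}$ orthogonal to $\spn\set{q_1\tran,\dots,q_j\tran}$. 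Orthonormality of $q_1\tran,\dots,q_{j+1}\tran$ then follows the same computation as in Proposition~\ref{prop:CGS-orthonorm-basis}: for each $\ell \in \set{1,\dots,j}$, expanding $\inp{q_{j+1}\tran, q_\ell\tran}$ and using bilinearity together with the inductive orthonormality of $q_1\tran,\dots,q_j\tran$ gives $\inp{q_{j+1}\tran,q_\ell\tran} = 0$, while $\inp{q_{j+1}\tran,q_{j+1}\tran} = 1$ by construction.

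It remains to identify the span. By the inductive hypothesis, $q_j\tran \in \spn\set{x\tran,\dots,x\tran\mat{M}^{j-1}}$, hence $q_j\tran\mat{M} \in \spn\set{x\tran\mat{M},\dots,x\tran\mat{M}^{j}}$. Subtracting a linear combination of $q_1\tran,\dots,q_j\tran$ — which by the inductive hypothesis lie in $\mathcal{K}^j(x,\mat{M}) = \spn\set{x\tran,\dots,x\tran\mat{M}^{j-1}}$ — keeps us in $\spn\set{x\tran,\dots,x\tran\mat{M}^{j}} = \mathcal{K}^{j+1}(x,\mat{M})$, so $q_{j+1}\tran \in \mathcal{K}^{j+1}(x,\mat{M})$, and therefore $\spn\set{q_1\tran,\dots,q_{j+1}\tran} \subseteq \mathcal{K}^{j+1}(x,\mat{M})$. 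For the reverse inclusion, since $q_{j+1}\tran \neq 0$ and it is orthogonal to the $j$-dimensional space $\mathcal{K}^j(x,\mat{M})$, the vectors $q_1\tran,\dots,q_{j+1}\tran$ are linearly independent, so $\spn\set{q_1\tran,\dots,q_{j+1}\tran}$ has dimension $j+1$; as $\mathcal{K}^{j+1}(x,\mat{M})$ has dimension at most $j+1$, the two spaces coincide.
\end{proof}

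\medskip
\noindent\textbf{Main obstacle.} The only genuinely delicate point is the bookkeeping that the inner loop's residual update truly produces the \ac{MGS} formula above — one has to check that the running vector $r_i$ at step $i$ already has its $q_1\tran,\dots,q_{i-1}\tran$ components removed, so that $h_{j,i} = \inp{r_i\tran,q_i}$ equals $\inp{(q_j\tran\mat{M})\tran, q_i}$ thanks to the orthogonality of the $q_i$'s, exactly as in the auxiliary induction inside the proof of Proposition~\ref{prop:cgs-equiv-mgs}. A secondary subtlety is the dimension count for $\mathcal{K}^{j+1}$: it could a priori be smaller than $j+1$, but the argument above sidesteps this because we only need $\dim\mathcal{K}^{j+1}(x,\mat{M}) \le j+1$ together with the established lower bound $j+1$ coming from the linear independence of the orthonormal $q_i\tran$'s. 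Everything else is a direct transcription of the Gram-Schmidt lemmas already proved.
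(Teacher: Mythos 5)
Your proof is correct, and it splits into two halves that relate differently to the paper's argument. The orthonormality half is essentially the paper's: both of you unfold the inner loop to obtain $r_{j+1} = q_j\tran\mat{M} - \sum_{i=1}^{j}\inp{\mat{M}\tran q_j, q_i}q_i\tran$ (the paper's equation for $r_\ell$, proved by the same device as in Proposition~\ref{prop:cgs-equiv-mgs}), and both of you correctly note that this simplification already uses the inductive orthonormality of $q_1\tran,\dots,q_j\tran$ — the bookkeeping you flag as the main obstacle is exactly the point the paper handles by reference to that earlier proof. For the span equality you take a genuinely different route: the paper runs its induction on the statement $\spn\set{q_1\tran,\dots,q_j\tran, q_j\tran\mat{M}} = \mathcal{K}^{j+1}(x,\mat{M})$, writes $q_j\tran\mat{M} = \sum_{\ell=1}^{j+1}\lambda_\ell x\tran\mat{M}^{\ell-1}$ with $\lambda_{j+1}\neq 0$, and checks that $q_{j+1}\tran\mat{M}$ lands in $\mathcal{K}^{j+2}(x,\mat{M})$; you instead prove only the inclusion $\spn\set{q_1\tran,\dots,q_{j+1}\tran}\subseteq\mathcal{K}^{j+1}(x,\mat{M})$ and close the gap by a dimension count, using that the $j+1$ orthonormal vectors are linearly independent while $\mathcal{K}^{j+1}(x,\mat{M})$ is spanned by $j+1$ vectors. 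Your dimension argument is clean and has the advantage of sidestepping the nonvanishing-leading-coefficient claim ($\lambda_{j+1}\neq 0$), which the paper asserts inside its induction without a separate justification; what the paper's more explicit linear-combination bookkeeping buys is a concrete picture of how each new direction $x\tran\mat{M}^j$ enters the basis, a picture it leans on again in the surrounding results such as Proposition~\ref{prop:arnoldi-iter-l9-invariant}.
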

\begin{proof}
    We observe that Algorithm~\ref{alg:arnoldi-iteration} almost mirrors Algorithm~\ref{alg:modified-gs} with input vectors $x\tran, x\tran \mat{M}, \dots, x\tran \mat{M}^n$.
    Except for the possible early return in Algorithm~\ref{alg:arnoldi-iteration}, these two are effectively structurally the same, safe for the different ranges of the inner for-loops.
    But even this difference can be remedied.

    By using the exact same technique as in the proof of Proposition~\ref{prop:cgs-equiv-mgs} and~(\ref{eq:partial-gs}), one can show that at line 8 we have that
    \begin{equation}
        r_\ell = q_j\tran \mat{M} - \sum_{i = 1}^{\ell - 1} \inp*{\mat{M}\tran q_j, q_i} q_i\tran,\label{eq:r-j-gram-schmidt}
    \end{equation}
    mirroring~(\ref{eq:partial-gs}).
    Further, with $\ell = j$ and Proposition~\ref{prop:closed-form-cgs} one can show that the resulting vectors $q_1, \dots q_j$ are the result of a Gram-Schmidt-like procedure, meaning with Proposition~\ref{prop:CGS-orthonorm-basis}, that they are pairwise orthonormal.
    More precisely, we orthonormalize $q_j \mat{M}$ against $q_1, \dots q_j$.

    So we need to show $\spn\set{q_1, \dots q_j, q_j \mat{M}} = \mathcal{K}^{j + 1}(x, \mat{M})$.
    Again we do this by induction, starting with $j = 1$.
    By line two in Algorithm~\ref{alg:arnoldi-iteration}, we have
    \[
        \spn\Set{q_1\tran, q_1\tran \mat{M}} = \spn\Set{\frac{x\tran}{\norm{x}_2}, \frac{x\tran}{\norm{x}_2} \mat{M}} = \spn\Set{x\tran, x\tran \mat{M}} = \mathcal{K}^2(x, \mat{M}).
    \]
    Assume $\spn\set{q_1\tran, \dots, q_j\tran, q_j\tran \mat{M}} = \mathcal{K}^{j + 1}(x, \mat{M})$ holds with fixed $j \in \N \setminus \set{0}$ as the \ac{IH}.
    As such, the linear combination of $q_j\tran \mat{M}$, with $\lambda_\ell \in \R$ and $\lambda_{j + 1} \neq 0$ will be denoted as
    \[
        q_j\tran \mat{M} \eqqcolon \sum_{\ell = 1}^{j + 1} \lambda_\ell x\tran \mat{M}^{\ell - 1}.
    \]
    This allows us to see that
    \begin{align*}
        q_{j + 1}\tran \mat{M} &\txteq{Prop.~\ref{prop:closed-form-cgs}} \underbrace{\frac{1}{\norm{q_j\tran \mat{M} - \sum_{i = 1}^j \inp{\mat{M}\tran q_j, q_i} q_i\tran}_2}}_{\eqqcolon \mu \in \R} \left( q_j\tran \mat{M} - \sum_{i = 1}^j \inp{\mat{M}\tran q_j, q_i} q_i\tran \right) \mat{M}\\
        &= \mu (q_j\tran \mat{M}) \mat{M} - \mu \sum_{i = 1}^j \inp*{\mat{M}\tran q_j, q_i} q_i\tran \mat{M}\\
        &= \underbrace{\mu \sum_{\ell = 1}^{j + 1} \lambda_\ell x\tran \mat{M}^\ell}_{\in \mathcal{K}^{j + 2}(x, \mat{M})} - \underbrace{\mu \sum_{i = 1}^j \inp*{\mat{M} q_j, q_i} q_i\tran \mat{M}}_{\txtin{\ac{IH}} \mathcal{K}^{j + 1}(x, \mat{M})} \in \mathcal{K}^{j + 2}(x, \mat{M}).\qedhere
    \end{align*}
\end{proof}
\begin{corollary}\label{cor:arnoldi-iter-line11-span}
If Algorithm~\ref{alg:arnoldi-iteration} returns at line 11, we have that
\[
    \spn\Set{q_1\tran, \dots, q_{m + 1}\tran} = \mathcal{K}^{m + 1}(x, \mat{M}).
\]
\end{corollary}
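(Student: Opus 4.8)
The plan is to bootstrap on Proposition~\ref{prop:arnoldi-output-orthonorm-krylov}, which already describes the first $m$ output vectors, and then handle the single extra vector $q_{m+1}\tran$ produced just before the algorithm reaches the return in line~11.

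First I would unpack the hypothesis. Reaching line~11 means the outer for-loop ran through every $j = 1, \dots, m$ without the early return ever being triggered, so $h_{j, j+1} = \norm{r_{j+1}}_2 \neq 0$, hence $r_{j+1} \neq 0$ and $q_{j+1}\tran = r_{j+1}\tran / h_{j, j+1}$ is well defined, for each such $j$; in particular $q_1\tran, \dots, q_{m+1}\tran$ all exist. Moreover, for $j = m$ the execution passes line~8, so Proposition~\ref{prop:arnoldi-output-orthonorm-krylov} applies: $q_1\tran, \dots, q_m\tran$ is an orthonormal basis of $\mathcal{K}^m(x, \mat{M})$, and the induction inside its proof additionally yields the span identity $\spn\set{q_1\tran, \dots, q_m\tran, q_m\tran \mat{M}} = \mathcal{K}^{m + 1}(x, \mat{M})$.

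It then remains to trade $q_m\tran \mat{M}$ for $q_{m+1}\tran$ in that spanning set. Instantiating~\eqref{eq:r-j-gram-schmidt} with $j = m$ and $\ell = m + 1$ and dividing by $h_{m, m+1}$ gives
\[
    q_{m + 1}\tran = \frac{1}{h_{m, m+1}}\, q_m\tran \mat{M} \;-\; \frac{1}{h_{m, m+1}} \sum_{i = 1}^{m} \inp{\mat{M}\tran q_m, q_i}\, q_i\tran .
\]
Because $h_{m, m+1} \neq 0$, the coefficient of $q_m\tran \mat{M}$ is nonzero, so $q_m\tran \mat{M} \in \spn\set{q_1\tran, \dots, q_m\tran, q_{m+1}\tran}$ while trivially $q_{m+1}\tran \in \spn\set{q_1\tran, \dots, q_m\tran, q_m\tran \mat{M}}$; hence these two spans coincide. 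Chaining this with the span identity from the previous step gives $\spn\set{q_1\tran, \dots, q_{m+1}\tran} = \mathcal{K}^{m+1}(x, \mat{M})$, as claimed.

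The only real obstacle is the off-by-one bookkeeping between Proposition~\ref{prop:arnoldi-output-orthonorm-krylov} and this corollary: the proposition speaks of the vectors present \emph{at} line~8 and so stops one vector short of what line~11 returns, which forces one to carry out (or extract from that proof) this last orthonormalization step and, crucially, to use $h_{m, m+1} \neq 0$ — precisely the content of the hypothesis that the algorithm returns at line~11 — to know the step neither fails nor shrinks the span.
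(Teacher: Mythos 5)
Your argument is correct, but it reaches the conclusion by a different route than the paper. The paper's proof never touches the final orthonormalization step explicitly: it observes that a run of Algorithm~\ref{alg:arnoldi-iteration} with integer input $m+1$ instead of $m$ would execute identically through the first $m$ iterations (since the line-9 return never fired), hence would arrive at line 8 with $j = m+1$ holding exactly the vectors $q_1,\dots,q_{m+1}$ returned at line 11, at which point Proposition~\ref{prop:arnoldi-output-orthonorm-krylov} applied at index $m+1$ gives the span statement (indeed an orthonormal basis) in one stroke. You instead stay inside the actual run with input $m$: you invoke the proposition at index $m$, pull the intermediate span identity $\spn\Set{q_1\tran,\dots,q_m\tran,q_m\tran\mat{M}} = \mathcal{K}^{m+1}(x,\mat{M})$ and equation~\eqref{eq:r-j-gram-schmidt} out of its proof, and then use $h_{m,m+1}\neq 0$ to exchange $q_m\tran\mat{M}$ for $q_{m+1}\tran$ in the spanning set. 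The paper's version is shorter and uses only the proposition's statement, at the price of the meta-argument that the two runs coincide on their common prefix; your version avoids that re-indexing trick and makes the last Gram--Schmidt step explicit, at the price of reaching into the proposition's proof rather than its statement --- which the paper itself also does elsewhere, e.g.\ in Proposition~\ref{prop:arnoldi-iter-l9-invariant}. As a small bonus, your use of $r_{m+1}\neq 0$ shows that $q_{m+1}\tran$ genuinely enlarges the span, i.e.\ $\dim\mathcal{K}^{m+1}(x,\mat{M}) = m+1$, which the corollary does not even require. Only a cosmetic slip: line 10 sets $q_{j+1} \coloneqq r_{j+1}\tran/h_{j,j+1}$, so $q_{j+1}\tran = r_{j+1}/h_{j,j+1}$ (your later displayed formula is the correct one).
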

\begin{proof}
    Viewing Algorithm~\ref{alg:arnoldi-iteration} with an input of $m + 1$ instead of $m$, not returning at line 9 for $1 \leq j \leq m$, we know from Proposition~\ref{prop:arnoldi-output-orthonorm-krylov} that for $j = m + 1$ at line 8 we have $\spn\set{q_1\tran, \dots, q_{m + 1}\tran} = \mathcal{K}^{m + 1}(x, \mat{M})$.
    If we go back to Algorithm~\ref{alg:arnoldi-iteration} with an input of $m$ returning at line 11, we can see that their $q_1, \dots, q_{m + 1}$ are the same, concluding the proof.
\end{proof}
\begin{proposition}\label{prop:arnoldi-iter-l9-invariant}
If Algorithm~\ref{alg:arnoldi-iteration} returns at line 9, we have that
\[
    \forall m \geq j : \mathcal{K}^m(x, \mat{M}) = \mathcal{K}^j(x, \mat{M}).
\]
\end{proposition}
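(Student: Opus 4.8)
The plan is to first unpack what \enquote{returns at line 9} means: the guard $h_{j,j+1} = \norm{r_{j+1}}_2 = 0$ forces $r_{j+1} = \mathbf{0}$. Substituting $\ell = j+1$ into the relation~\eqref{eq:r-j-gram-schmidt} (equivalently, reading it directly off the inner for-loop) then gives
\[
    q_j\tran \mat{M} = \sum_{i=1}^{j} \inp{\mat{M}\tran q_j, q_i}\, q_i\tran \in \spn\set{q_1\tran, \dots, q_j\tran}.
\]
By Proposition~\ref{prop:arnoldi-output-orthonorm-krylov}, at line 8 the vectors $q_1\tran, \dots, q_j\tran$ form an orthonormal basis of $\mathcal{K}^j(x, \mat{M})$, so this says precisely that $q_j\tran \mat{M} \in \mathcal{K}^j(x, \mat{M})$.

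Next I would show the same containment for every $q_i\tran \mat{M}$ with $i < j$. For such $i$ the algorithm did not return early, so $q_{i+1}\tran$ was computed via $q_{i+1}\tran = r_{i+1}\tran / h_{i,i+1}$, which rearranges to $q_i\tran \mat{M} = \sum_{\ell=1}^{i} h_{i,\ell}\, q_\ell\tran + h_{i,i+1}\, q_{i+1}\tran \in \spn\set{q_1\tran, \dots, q_{i+1}\tran} \subseteq \mathcal{K}^j(x, \mat{M})$. Combining the two cases, $q_i\tran \mat{M} \in \mathcal{K}^j(x, \mat{M})$ for all $i \in \set{1, \dots, j}$; since the $q_i\tran$ span $\mathcal{K}^j(x, \mat{M})$ and the map $v\tran \mapsto v\tran \mat{M}$ is linear, $\mathcal{K}^j(x, \mat{M})$ is invariant under right multiplication by $\mat{M}$.

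From this invariance the claim follows quickly. Since $x\tran \in \mathcal{K}^j(x, \mat{M})$, an easy induction on $k$ gives $x\tran \mat{M}^k \in \mathcal{K}^j(x, \mat{M})$ for every $k \in \N$, hence $\mathcal{K}^m(x, \mat{M}) = \spn\set{x\tran, \dots, x\tran \mat{M}^{m-1}} \subseteq \mathcal{K}^j(x, \mat{M})$ for all $m$. Conversely, the Krylov subspaces are nested increasing, as each is spanned by a superset of the vectors spanning the previous one, so $\mathcal{K}^j(x, \mat{M}) \subseteq \mathcal{K}^m(x, \mat{M})$ whenever $m \geq j$. The two inclusions yield $\mathcal{K}^m(x, \mat{M}) = \mathcal{K}^j(x, \mat{M})$ for all $m \geq j$.

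I expect the only real care-point — rather than a genuine obstacle — to be keeping the bookkeeping straight between line 8, where Proposition~\ref{prop:arnoldi-output-orthonorm-krylov} applies and the $q_i\tran$ already span $\mathcal{K}^j(x, \mat{M})$, and the early-return guard at line 9, and making sure the \enquote{$i < j$} case is legitimately justified by the fact that no earlier return occurred (so that $q_{i+1}\tran$ genuinely exists). Everything else is routine linear algebra.
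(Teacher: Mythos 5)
Your proposal is correct and follows essentially the same route as the paper: the early return forces $r_{j+1} = 0$, which via~\eqref{eq:r-j-gram-schmidt} and Proposition~\ref{prop:arnoldi-output-orthonorm-krylov} places $q_j\tran \mat{M}$ in $\mathcal{K}^j(x, \mat{M})$, giving invariance of $\mathcal{K}^j(x, \mat{M})$ under $\mat{M}$ and hence equality with all higher Krylov spaces. The only difference is one of explicitness: you verify invariance on the whole basis $q_1\tran, \dots, q_j\tran$ via the row-wise Arnoldi relation and spell out the induction for $m \geq j$, whereas the paper deduces $\mathcal{K}^j(x, \mat{M}) = \mathcal{K}^{j+1}(x, \mat{M})$ and leaves those routine closing steps implicit.
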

\begin{proof}
    As we return in line 9, it must be that $\norm{r_{j + 1}}_2 = 0$.
    Thus, as similarly seen in the proof of Proposition~\ref{prop:arnoldi-output-orthonorm-krylov}
    \[
        r_{j + 1}~\txteq{(\ref{eq:r-j-gram-schmidt})} \underbrace{q_j\tran \mat{M}}_{\in \mathcal{K}^{j + 1(x, \mat{M})}} - \underbrace{\sum_{i = 1}^j \inp*{\mat{M}\tran q_j, q_i}q_i\tran}_{\in \mathcal{K}^j(x, \mat{M})} = 0.
    \]
    So, $\mathcal{K}^j(x, \mat{M}) = \mathcal{K}^{j + 1}(x, \mat{M})$, meaning $\mathcal{K}^j(x, \mat{M})$ is invariant under $\mat{M}$, which concludes the proof for all $m \geq j$.
\end{proof}
\begin{proposition}\label{prop:arnoldi-k-n1-terminate-9}
With $m \geq n$ in Algorithm~\ref{alg:arnoldi-iteration}, it must always return at line 9.
\end{proposition}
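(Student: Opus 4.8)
The plan is to show that the Krylov subspaces $\mathcal{K}^1(x,\mat{M}) \subseteq \mathcal{K}^2(x,\mat{M}) \subseteq \cdots$ cannot strictly increase forever, simply because they all live inside $\R^n$, which has dimension $n$. By Proposition~\ref{prop:arnoldi-output-orthonorm-krylov}, as long as Algorithm~\ref{alg:arnoldi-iteration} has not yet returned at line~9 through iteration $j$, the vectors $q_1\tran,\dots,q_{j+1}\tran$ form an orthonormal — in particular linearly independent — basis of $\mathcal{K}^{j+1}(x,\mat{M})$, so $\dim \mathcal{K}^{j+1}(x,\mat{M}) = j+1$. Since these subspaces are all contained in $\R^n$, we must have $j+1 \leq n$, i.e.\ the iteration can survive at most through $j = n-1$ without triggering the early return. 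Because we run $j$ from $1$ to $m$ with $m \geq n$, iteration $j = n$ (if reached) would force $\dim \mathcal{K}^{n+1}(x,\mat{M}) = n+1 > n$, a contradiction; hence the early return at line~9 must have fired at some iteration $j \leq n-1 < n \leq m$, so the loop never reaches its natural end at line~13.

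More concretely, the key steps in order are: first, argue that for each $j$ for which line~9 has not yet caused a return, Proposition~\ref{prop:arnoldi-output-orthonorm-krylov} gives an orthonormal basis of size $j+1$ for $\mathcal{K}^{j+1}(x,\mat{M})$, hence $\dim\mathcal{K}^{j+1}(x,\mat{M}) = j+1$; second, note $\mathcal{K}^{j+1}(x,\mat{M}) \subseteq \R^n$ forces $j+1 \leq n$; third, conclude that line~9 must return for some $j \leq n-1$, and since $n-1 < n \leq m$, this happens strictly before the loop would naturally terminate at line~13, so the algorithm returns at line~9.

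I do not anticipate a serious obstacle here; the only point requiring a little care is the bookkeeping around \emph{when} line~9 has or has not triggered — one should phrase it as ``let $j^\ast$ be the first iteration at which $h_{j,j+1} = 0$'' (which exists by the dimension bound, taking $j^\ast \leq n-1$) and observe that the algorithm returns at line~9 in that iteration, which is reached since $j^\ast \leq n-1 < m$. One should also confirm the edge case that $x = \mathbf{0}$ or $n = 0$ does not break the argument, but in the intended setting $q_1 = x/\norm{x}_2$ is well-defined and $\dim\mathcal{K}^1(x,\mat{M}) = 1 \leq n$, so the induction base is sound.
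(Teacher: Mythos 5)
Your core argument is the same as the paper's: orthonormality of the $q_i$ (Proposition~\ref{prop:arnoldi-output-orthonorm-krylov}) together with the fact that $\R^n$ cannot contain more than $n$ orthonormal vectors forces the early return once $m \geq n$. The paper phrases this by reducing to $m = n$ and arguing directly at iteration $j = n$; you phrase it through the dimensions of the Krylov subspaces, which is the same idea.

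There is, however, an off-by-one slip in your bookkeeping. The dimension bound only rules out \emph{surviving} iteration $j = n$ (passing the test in line 9 and defining $q_{n+1}$ in line 10); it does not rule out \emph{reaching} iteration $j = n$. Consequently your claims that the early return fires at some $j \leq n-1$, and that $j^\ast \leq n-1 < m$, are false in general: whenever the Krylov subspaces grow strictly up to dimension $n$ (e.g.\ when $x$ is a cyclic vector of $\mat{M}$), the algorithm completes iterations $1, \dots, n-1$ and the residual first vanishes exactly at $j = n$, where $\mathcal{K}^n(x, \mat{M}) = \R^n$ is invariant. The repair is immediate and is exactly the paper's argument: if no return happened for $j < n$, then iteration $j = n$ is reached (since $m \geq n$), and there $q_1\tran, \dots, q_n\tran$ are orthonormal in $\R^n$, so $q_{n+1}$ could not be added while keeping linear independence; hence $h_{n, n+1} = 0$ and the algorithm returns at line 9 in iteration $j = n$. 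The correct bound is therefore $j^\ast \leq n \leq m$, which still yields the proposition, but the intermediate claim as you wrote it does not.
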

\begin{proof}
    It suffices to show that the proposition holds for $m = n$.
    If we return at line 9 for some $1 \leq j < m$, we are done.
    So suppose now we have not, setting $j = n$.
    Then, by Proposition~\ref{prop:arnoldi-output-orthonorm-krylov}, $q_1\tran, \dots q_n\tran$ is an orthonormal basis of $\mathcal{K}^{n - 1}(x, \mat{M})$.
    Note that each element of this basis is of size $n$, meaning it cannot grow any larger while retaining linear independence.
    With this, if $q_{n + 1}$ exists, $q_1, \dots, q_{n + 1}$ could not be linearly independent, as Proposition~\ref{prop:arnoldi-output-orthonorm-krylov} would suggest.
    We must conclude that the precondition for this proposition is never met, meaning $q_{n + 1}$ is never defined.
    Thus, if $j = n$, we must return at line 9 before the assignment of $q_{n + 1}$ at line 10.
\end{proof}
We have now seen that the Arnoldi iteration from Algorithm~\ref{alg:arnoldi-iteration} always produces a basis of the Krylov subspace $\mathcal{K}^{m + 1}(x, \mat{M})$.
Note that we have so far never used the terms of the form $h_{i, j}$.
This will change as they serve a central role in the following relation, which looks, and as we will later see, also is similar to the criterion for exactness of an aggregation.
\begin{lemma}[Arnoldi relation]\label{lem:arnoldi-relation}
As in~\cite[174]{arbenz2016lecturenotes}, we have
\[
    \underbrace{\pmat{
        h_{1,1}     & h_{1,2}       & 0             & \cdots    & 0 \\
        h_{2,1}     & h_{2,2}       & h_{2,3}       & \ddots    & \vdots \\
        h_{3,1}     & h_{3,2}       & h_{3,3}       & \ddots    & 0 \\
        \vdots      & \vdots        & \vdots        & \ddots    & h_{j-1,j} \\
        h_{j,1}     & h_{j,2}       & h_{j,3}       & \cdots    & h_{j,j}
    }}_{\eqqcolon \mat{H}_j} \underbrace{\pmat{
        \hpipe \, q_1\tran \, \hpipe \\
        \vdots \\
        \hpipe \, q_j\tran \, \hpipe
    }}_{\eqqcolon \mat{Q}_j} + \underbrace{\left(
    \begin{array}{c}
        \scalebox{1.33}{$\mathbf{0}$}\\
        \hpipe \, h_{j, j + 1}q_{j + 1}\tran \, \hpipe
    \end{array}
    \right)}_{\in \R^{j \times n}} = \mat{Q}_j \mat{M}
\]
in Algorithm~\ref{alg:arnoldi-iteration} at line 10.
\end{lemma}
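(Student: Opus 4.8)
The plan is to read off what each of the $j$ rows of the claimed matrix equation says and verify it against the arithmetic performed by Algorithm~\ref{alg:arnoldi-iteration} at line~10. The left-hand side $\mat{H}_j \mat{Q}_j$ has as its $i$-th row the vector $\sum_{\ell=1}^{j} h_{i,\ell}\, q_\ell\tran$; because $\mat{H}_j$ is lower Hessenberg, the entries $h_{i,\ell}$ vanish for $\ell > i+1$, so the $i$-th row of $\mat{H}_j \mat{Q}_j$ is really $\sum_{\ell=1}^{\min(i+1,j)} h_{i,\ell}\, q_\ell\tran$. Adding the correction term (which is zero in all rows except the last, where it contributes $h_{j,j+1} q_{j+1}\tran$) and comparing with the $i$-th row of $\mat{Q}_j \mat{M}$, namely $q_i\tran \mat{M}$, we must show for every $i \in \set{1,\dots,j}$ that
\[
    q_i\tran \mat{M} = \sum_{\ell=1}^{\min(i+1,\,j)} h_{i,\ell}\, q_\ell\tran + [\,i = j\,]\, h_{j,j+1}\, q_{j+1}\tran .
\]

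**Key steps.** First I would fix $i$ and examine the $i$-th pass of the outer for-loop (the one with $j$ replaced by $i$, using distinct names to avoid the overloaded index). In that pass the algorithm sets $r_1 \coloneqq q_i\tran \mat{M}$ and then, for $\ell = 1, \dots, i$, performs $h_{i,\ell} \coloneqq \inp{r_\ell\tran, q_\ell}$ and $r_{\ell+1} \coloneqq r_\ell - h_{i,\ell} q_\ell\tran$. Unrolling this inner loop telescopically gives
\[
    r_{i+1} = r_1 - \sum_{\ell=1}^{i} h_{i,\ell}\, q_\ell\tran = q_i\tran \mat{M} - \sum_{\ell=1}^{i} h_{i,\ell}\, q_\ell\tran ,
\]
which is exactly equation~(\ref{eq:r-j-gram-schmidt}) specialized. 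Then I would split into two cases. If $i < j$, the algorithm did not return at line~9 during this pass, so $h_{i,i+1} = \norm{r_{i+1}}_2 \neq 0$ and $q_{i+1}\tran = r_{i+1}\tran / h_{i,i+1}$, i.e. $r_{i+1} = h_{i,i+1} q_{i+1}\tran$; substituting into the telescoped identity and moving terms across yields $q_i\tran \mat{M} = \sum_{\ell=1}^{i+1} h_{i,\ell}\, q_\ell\tran$, which is the $i$-th row since $\min(i+1,j) = i+1$ and the correction term is absent. If $i = j$, the same telescoped identity reads $q_j\tran \mat{M} = \sum_{\ell=1}^{j} h_{j,\ell}\, q_\ell\tran + r_{j+1}\tran$, and since line~8 defines $h_{j,j+1} = \norm{r_{j+1}}_2$ and we are at line~10 (so $h_{j,j+1} \neq 0$) line~11 gives $q_{j+1}\tran = r_{j+1}\tran / h_{j,j+1}$, hence $r_{j+1}\tran = h_{j,j+1} q_{j+1}\tran$, which is precisely the correction term in the last row.

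**Main obstacle.** There is no deep difficulty here — the statement is essentially a bookkeeping identity. The one genuine subtlety is index hygiene: the lemma reuses the letter $j$ both as the outer-loop variable and as the fixed dimension of the output block, while the inner loop uses $i$; in the write-up I would rename the generic row index (say to $i$) and the inner summation index (say to $\ell$) so that the telescoping argument is unambiguous, and I would be careful that the Hessenberg zero pattern of $\mat{H}_j$ is invoked correctly so that the sum $\sum_{\ell=1}^{j} h_{i,\ell} q_\ell\tran$ really does truncate at $\ell = i+1$. Another minor point worth a sentence: we must be at line~10, which guarantees $h_{j,j+1} \neq 0$, so $q_{j+1}$ is well-defined and the correction term makes sense; for the rows $i < j$ the corresponding $h_{i,i+1} \neq 0$ likewise holds because otherwise the algorithm would have returned earlier at line~9. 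Once these indexing matters are pinned down, each row is an immediate consequence of unrolling the inner for-loop, and assembling the $j$ row identities into the displayed block-matrix equation completes the proof.
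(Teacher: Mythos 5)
Your proof is correct, and at its heart it is the same verification the paper performs: unroll the inner loop of a pass, substitute $r_{i+1} = h_{i,i+1}\, q_{i+1}\tran$ via lines 8 and 10, and read off the corresponding row of the claimed identity. The genuine difference is how the coefficients are handled. You telescope the recurrence $r_{\ell+1} = r_\ell - h_{i,\ell}\, q_\ell\tran$ directly, so your identity $r_{i+1} = q_i\tran \mat{M} - \sum_{\ell=1}^{i} h_{i,\ell}\, q_\ell\tran$ carries the algorithm's own coefficients and needs no orthogonality at all; the paper instead starts from~\eqref{eq:r-j-gram-schmidt}, whose coefficients are $\inp*{\mat{M}\tran q_j, q_\ell}$, and must invoke the orthonormality of the $q_\ell$ (via~\eqref{eq:h-j-i-closed-form}) to identify these with the $h_{j,\ell}$ actually computed in line 6. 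Your route is thus slightly more elementary — the Arnoldi relation comes out as pure bookkeeping, independent of Proposition~\ref{prop:arnoldi-output-orthonorm-krylov} — and it is also more explicit about the rows $i<j$, including the Hessenberg truncation of the sum and the fact that for those rows the term $h_{i,i+1} q_{i+1}\tran$ is already absorbed into $\mat{H}_j \mat{Q}_j$, points the paper compresses into the closing remark that the displayed computation is the $j$-th row of both sides. Two cosmetic slips, neither of which affects the argument: your telescoped identity is not \enquote{exactly~\eqref{eq:r-j-gram-schmidt} specialized} — that equation has the coefficients $\inp*{\mat{M}\tran q_j, q_\ell}$, and the two coincide only after the orthonormality step you are deliberately avoiding, so the telescoping should be presented as self-contained — and the assignment $q_{j+1} \coloneqq r_{j+1}\tran / h_{j,j+1}$ is line 10, not line 11 (also $r_{j+1}$ is a row vector, so the conclusion reads $r_{j+1} = h_{j,j+1} q_{j+1}\tran$ without the extra transpose).
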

\begin{proof}
    We start by recognizing for indices $i \leq j$ that
    \begin{equation}
        h_{j, i} \txteq{Alg.~\ref{alg:arnoldi-iteration},~(\ref{eq:r-j-gram-schmidt})} q_j\tran \mat{M} q_i - \sum_{\ell = 1}^{i - 1} \inp*{\mat{M}\tran q_j, q_\ell} \underbrace{\inp*{q_\ell, q_i}}_{= 0} \txteq{Def.~\ref{def:orthonormal}, Prop.~\ref{prop:arnoldi-output-orthonorm-krylov}} \inp*{\mat{M}\tran q_j, q_i}.\label{eq:h-j-i-closed-form}
    \end{equation}
    We can use this to then get
    \begin{align*}
        & h_{j, j + 1} q_{j + 1}\tran \txteq{Alg.~\ref{alg:arnoldi-iteration}} h_{j, j + 1} \frac{r_{j + 1}}{h_{j, j + 1}} = r_{j + 1} \txteq{(\ref{eq:r-j-gram-schmidt})} q_j\tran \mat{M} - \sum_{i = 1}^j \inp*{\mat{M}\tran q_j, q_i} q_i\tran\\
        \iff & q_j\tran \mat{M} = \sum_{i = 1}^j \inp*{\mat{M}\tran q_j, q_i} q_i\tran + h_{j, j + 1} q_{j + 1}\tran \txteq{(\ref{eq:h-j-i-closed-form})} \sum_{i = 1}^j h_{j, i} q_i\tran + h_{j, j + 1} q_{j + 1}\tran = \sum_{i = 1}^{j + 1} h_{j, i} q_i\tran.
    \end{align*}
    This is already enough, as $q_j\tran \mat{M}$ is the $j$-th row of the right-hand side and $\sum_{i = 1}^{j + 1} h_{j, i} q_i\tran$ the $j$-th row of the left-hand side in the original identity.
\end{proof}
\begin{corollary}\label{cor:arnoldi-iteration-zero-case}
If we return at line 9 in Algorithm~\ref{alg:arnoldi-iteration}, we have
\[\mat{H}_j \mat{Q}_j = \mat{Q}_j \mat{M}.\]
\end{corollary}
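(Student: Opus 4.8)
The plan is to deduce this from the Arnoldi relation, Lemma~\ref{lem:arnoldi-relation}, by observing that its ``correction term'' vanishes exactly when we return at line 9. Returning at line 9 in the iteration with loop index $j$ happens precisely because $h_{j, j + 1} = \norm{r_{j + 1}}_2 = 0$, so $r_{j + 1}$ is the zero vector. Morally, substituting $h_{j, j + 1} = 0$ into the identity of Lemma~\ref{lem:arnoldi-relation} kills the term $h_{j, j + 1} q_{j + 1}\tran$ sitting in its last row, leaving $\mat{H}_j \mat{Q}_j = \mat{Q}_j \mat{M}$. The one subtlety is that $q_{j + 1}$ is never assigned when line 10 is skipped, so rather than quoting Lemma~\ref{lem:arnoldi-relation} verbatim I would re-run its proof, row by row.

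I would first note that iterations $1, \dots, j - 1$ must all have reached line 10 (otherwise the algorithm would have returned earlier), so $q_1\tran, \dots, q_j\tran$ are defined and, by Proposition~\ref{prop:arnoldi-output-orthonorm-krylov}, orthonormal. For each $\ell \in \set{1, \dots, j - 1}$ the computation in the proof of Lemma~\ref{lem:arnoldi-relation} goes through unchanged at that iteration's line 10 and gives $q_\ell\tran \mat{M} = \sum_{i = 1}^{\ell + 1} h_{\ell, i} q_i\tran$; because $\mat{H}_j$ is upper Hessenberg this equals $\sum_{i = 1}^{j} \mat{H}_j(\ell, i)\, q_i\tran$, i.e. the $\ell$-th rows of $\mat{H}_j \mat{Q}_j$ and of $\mat{Q}_j \mat{M}$ agree. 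For the last row $\ell = j$, I would use~(\ref{eq:r-j-gram-schmidt}) with index $j + 1$ (valid at line 8, before the early return) together with~(\ref{eq:h-j-i-closed-form}) to write $r_{j + 1} = q_j\tran \mat{M} - \sum_{i = 1}^{j} h_{j, i} q_i\tran$; since $r_{j + 1} = 0$ this is exactly $q_j\tran \mat{M} = \sum_{i = 1}^{j} h_{j, i} q_i\tran$, the $j$-th row of the claimed identity. Assembling the $j$ rows yields $\mat{H}_j \mat{Q}_j = \mat{Q}_j \mat{M}$.

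The only real obstacle is this bookkeeping around the undefined $q_{j + 1}$: one must resist simply plugging into Lemma~\ref{lem:arnoldi-relation} and instead notice that the single place its proof invokes line 10 is the step $h_{j, j + 1} q_{j + 1}\tran = r_{j + 1}$, whose role is here played directly by $r_{j + 1} = 0$. Everything else is routine arithmetic already carried out in the lemma's proof.
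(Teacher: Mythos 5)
Your proposal is correct and follows essentially the same route as the paper, which simply observes that returning at line 9 means $h_{j, j+1} = 0$ and then invokes Lemma~\ref{lem:arnoldi-relation} to conclude $\mat{H}_j \mat{Q}_j = \mat{Q}_j \mat{M}$. Your extra bookkeeping about the never-assigned $q_{j+1}$ (re-deriving the last row directly from $r_{j+1} = 0$ via~(\ref{eq:r-j-gram-schmidt}) and~(\ref{eq:h-j-i-closed-form})) is a more careful treatment of a technicality the paper's one-line proof glosses over, but it does not change the underlying argument.
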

\begin{proof}
    We return at line 9 if, and only if, $h_{j, j + 1} = 0$, so we get $\mat{H}_j \mat{Q}_{j} \txteq{Lem.~\ref{lem:arnoldi-relation}} \mat{Q}_j \mat{M}$.\qedhere
\end{proof}
\begin{corollary}
    If $m \geq n$ in Algorithm~\ref{alg:arnoldi-iteration}, we have $\mat{Q}_j$ and $\mat{H}_j$ with $\mat{H}_j \mat{Q}_j = \mat{Q}_j \mat{M}$.
\end{corollary}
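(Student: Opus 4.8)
The plan is to chain together the two immediately preceding results, so this corollary requires no new computation. First I would invoke Proposition~\ref{prop:arnoldi-k-n1-terminate-9}, which guarantees that whenever $m \geq n$, Algorithm~\ref{alg:arnoldi-iteration} must return at line 9. This pins down which branch of the algorithm actually executes: the return necessarily happens at line 9 (and not at line 11), so there is an index $j$ at which $h_{j, j+1} = 0$ is reached, and $\mat{Q}_j$, $\mat{H}_j$ are the matrices assembled from $q_1, \dots, q_j$ and the corresponding $h_{i, \cdot}$ exactly as set up in Lemma~\ref{lem:arnoldi-relation}.

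Then I would apply Corollary~\ref{cor:arnoldi-iteration-zero-case}, which already records that a return at line 9 yields precisely $\mat{H}_j \mat{Q}_j = \mat{Q}_j \mat{M}$ for that same $j$. Combining the two observations closes the argument: the hypothesis $m \geq n$ forces the line-9 return by Proposition~\ref{prop:arnoldi-k-n1-terminate-9}, and the line-9 return delivers the stated identity by Corollary~\ref{cor:arnoldi-iteration-zero-case}.

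There is essentially no obstacle here; the statement is a bookkeeping corollary whose purpose is to record, for the later aggregation construction, that the "$m$ large enough" regime always produces a genuine invariant relation rather than the truncated Arnoldi relation of Lemma~\ref{lem:arnoldi-relation} with its nonzero remainder term $h_{j, j+1} q_{j+1}\tran$. The only point worth stating carefully in the write-up is that $j$ denotes the data-dependent termination index, so that the claim is really an existential one about the matrices $\mat{Q}_j$ and $\mat{H}_j$ the algorithm outputs in this case.
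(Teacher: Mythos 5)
Your argument is correct and matches the paper's proof exactly: both cite Proposition~\ref{prop:arnoldi-k-n1-terminate-9} to force the line-9 return under $m \geq n$, then apply Corollary~\ref{cor:arnoldi-iteration-zero-case} to obtain $\mat{H}_j \mat{Q}_j = \mat{Q}_j \mat{M}$. Your additional remark that $j$ is the data-dependent termination index is a reasonable clarification but not a divergence from the paper's route.
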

\begin{proof}
    From Proposition~\ref{prop:arnoldi-k-n1-terminate-9}, we know that $m \geq n$ assures we will return at line 9.
    So, combining this with Corollary~\ref{cor:arnoldi-iteration-zero-case}, we conclude that we indeed get $\mat{H}_j \mat{Q}_j = \mat{Q}_j \mat{M}$.
\end{proof}

\chapter{Finding small aggregations}\label{ch:finding-small-aggregations}
In this chapter, we will see how the Arnoldi iteration from Section~\ref{sec:arnoldi-iteration} can be used to construct aggregations for arbitrary transition matrices and initial distributions.
These are either exact with minimal state space size or their error  for the first few steps is zero, again with minimal state space size.
The formal definitions and proofs about this construction can be seen in Section~\ref{sec:exact-aggregations}.
Following, in Section~\ref{sec:aggregations-under-error-bounds}, we will see that the error bound from Section~\ref{sec:error-bounds-on-approximated-transient-distributions} cannot be improved in the general case of the previously defined aggregations.
Lastly, we are going to look at a special case where we will see that no improvement to the bound is possible, again.
\section{Exact and initially exact aggregations}\label{sec:exact-aggregations}
If possible, finding an exact aggregation of a Markov chain is ideal, as this allows us to keep the error regarding transient distributions at precisely zero.
We know that we must have $\mat{\Pi} \mat{A} = \mat{A} \mat{P}$ and $p_0 = \tilde{p}_0$ for any exact aggregation.
One can see that the Arnoldi iteration from Algorithm~\ref{alg:arnoldi-iteration} yields the Arnoldi relation from Lemma~\ref{lem:arnoldi-relation}, which looks pretty similar to the first condition of aggregation to be exact.
Thus, this chapter looks at how the Arnoldi iteration can generate exact aggregations or at least aggregations with no error in the initial $k$ steps.
Furthermore, it will become apparent that those aggregations have the smallest possible state space size regarding these properties.
\begin{definition}[$k$-exact aggregation]\label{def:k-exact-aggr}
With $k \in \N$, we call an aggregation of a Markov chain \emph{$k$-exact}, or informally \emph{initially exact}, if, and only if, we have with $i \in \N$, $i \leq k$, that
\[
    \pi_0\tran \mat{\Pi}^i \mat{A} = p_0\tran \mat{P}^i.
\]
\end{definition}
\begin{definition}[Arnoldi aggregation]\label{def:arnoldi-aggr}
Given $n, m \in \N \setminus \set{0}$ and a Markov chain with transition matrix $\mat{P} \in \R^{n \times n}$ and initial distribution $p_0 \in \R^n$, run the Arnoldi iteration from Algorithm~\ref{alg:arnoldi-iteration} with $m$, $p_0$ and $\mat{P}$ as inputs to receive $\mat{H}_j$ and $\mat{Q}_j$.

We then call the aggregation with $\mat{H}_j$ as the aggregated step matrix, $\mat{Q}_j$ as the disaggregation matrix and with $\pi_0 \coloneqq (\norm{p_0}_2, 0, \dots, 0)\tran \in \R^j$ the \emph{Arnoldi aggregation}.
\end{definition}
Although the choice of $\pi_0$ might seem arbitrary, we will later see that this choice guarantees $\norm{e_0}_1 = 0$.
Now that we have defined the central concept of an Arnoldi aggregation, we move on to examine how this aggregation behaves.
\begin{proposition}\label{prop:arnoldi-aggr-pik-form}
Given an Arnoldi aggregation, we have for all $k \in \N$, $1 \leq k \leq j - 1$
\[
    \pi_k = (\underbrace{\lambda_1, \dots, \lambda_{k + 1},}_{\text{arbitrary reals}} \underbrace{0, \dots, 0}_{j - k -1 \times})\tran.
\]
\end{proposition}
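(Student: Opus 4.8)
The plan is to proceed by induction on $k$, closely tracking how the Arnoldi iteration builds the vectors $q_i$ and using the structure of the aggregated step matrix $\mat{H}_j$. The key observation is that $\mat{H}_j$ is upper Hessenberg: $\mat{H}_j(a,b) = 0$ whenever $b > a+1$. First I would establish the base case $k = 1$ (and perhaps $k = 0$ for bookkeeping). Since $\pi_0\tran = (\norm{p_0}_2, 0, \dots, 0)$, the product $\pi_1\tran = \pi_0\tran \mat{H}_j$ equals $\norm{p_0}_2$ times the first row of $\mat{H}_j$, which by the Hessenberg structure is $(h_{1,1}, h_{1,2}, 0, \dots, 0)$; hence $\pi_1$ has its first two entries arbitrary (they depend on the Arnoldi coefficients) and all others zero, matching the claimed form with $k = 1$.

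For the inductive step, assume $\pi_k = (\lambda_1, \dots, \lambda_{k+1}, 0, \dots, 0)\tran$ for some $1 \le k \le j-2$. Then $\pi_{k+1}\tran = \pi_k\tran \mat{H}_j = \sum_{i=1}^{k+1} \lambda_i \,(\mat{H}_j\text{'s } i\text{-th row})$. Each such row $i$, by the Hessenberg property, has nonzero entries only in positions $1, \dots, i+1$, so the largest column index appearing with a (potentially) nonzero coefficient in the sum is $(k+1) + 1 = k+2$. Therefore $\pi_{k+1}$ has entries $k+3, \dots, j$ equal to zero, while entries $1, \dots, k+2$ are some reals determined by the $\lambda_i$ and the $h_{a,b}$ — i.e.\ arbitrary — which is exactly the claimed form for index $k+1$. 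The restriction $k \le j-1$ in the statement is what guarantees we never run past the last column, so the Hessenberg truncation argument stays valid.

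The main obstacle, such as it is, is being careful about the indexing conventions: the Arnoldi algorithm as written produces $h_{j,i}$ with the row index second in some places, and Lemma~\ref{lem:arnoldi-relation} displays $\mat{H}_j$ with entry $(a,b)$ being $h_{a,b}$, so I would want to state explicitly once that $\mat{H}_j(a,b) = h_{a,b}$ and that this matrix is upper Hessenberg (zero above the first superdiagonal), citing the displayed form in Lemma~\ref{lem:arnoldi-relation}. One should also note that the claim only asserts the zero pattern plus ``these entries can be anything'', so no lower bound on the nonzero entries is needed — there is nothing to prove about the $\lambda_i$ being genuinely nonzero, only that the trailing block vanishes. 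Once the Hessenberg structure is in hand, the induction is a one-line shift-of-support argument.
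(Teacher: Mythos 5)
Your proposal is correct and follows essentially the same route as the paper: induction on $k$, with the inductive step expanding $\pi_{k+1}\tran = \pi_k\tran \mat{H}_j$ as a linear combination of the first $k+1$ rows of $\mat{H}_j$ and using its upper Hessenberg zero pattern to bound the support by position $k+2$. The only cosmetic difference is that the paper anchors the induction at $k=0$ via the definition of $\pi_0$ rather than at $k=1$.
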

\begin{proof}
    Proceed by finite induction on $k$.
    The base case $k = 0$ is clear by Definition~\ref{def:arnoldi-aggr} of $\pi_0$.
    For the \ac{IH}, assume that with $1 \leq k \leq j - 2$, we always have $\pi_k = (\lambda_1, \dots, \lambda_{k + 1}, 0, \dots, 0)\tran$.
    We then see
    \begin{align*}
        \pi_{k + 1}\tran &\txteq{Def.~\ref{def:aggr-trans-distr}} \pi_k\tran \mat{H}_j \txteq{\ac{IH}} (\lambda_1, \dots, \lambda_{k + 1}, 0, \dots, 0)
        \pmat{
            h_{1,1}     & h_{1,2}       & 0             & \cdots    & 0 \\
            h_{2,1}     & h_{2,2}       & h_{2,3}       & \ddots    & \vdots \\
            h_{3,1}     & h_{3,2}       & h_{3,3}       & \ddots    & 0 \\
            \vdots      & \vdots        & \vdots        & \ddots    & h_{j-1,j} \\
            h_{j,1}     & h_{j,2}       & h_{j,3}       & \cdots    & h_{j,j}
        }\\
        &= \sum_{i = 1}^{k + 1} \lambda_i (h_{i, 1}, \dots, h_{i, i + 1}, \underbrace{0, \dots, 0}_{j - i - 1\times}).
    \end{align*}
    The last summand $\lambda_{k + 1} (h_{k + 1, 1}, \dots, h_{k + 1, k + 2}, 0, \dots, 0)$ is precisely of the wanted form.
    As such, the entire sum is of that form.
\end{proof}
\begin{proposition}\label{prop:formula-arnoldi-aggr-error-basic}
In an Arnoldi aggregation with some number of steps $k \in \N$ we have
\[
    \pi_0\tran \mat{H}_j^k \mat{Q}_j = \pi_0\tran \mat{Q}_j \mat{P}^k - \sum_{i = 0}^{k - 1} \pi_0\tran \mat{H}_j^i
    \left( \begin{array}{c}
               \scalebox{1.33}{$\mathbf{0}$}\\
               \hpipe \, h_{j, j + 1}q_{j + 1}\tran \, \hpipe
    \end{array} \right)
    \mat{P}^{k - 1 - i}.
\]
\end{proposition}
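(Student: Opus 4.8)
The plan is to induct on $k \in \N$, with the Arnoldi relation (Lemma~\ref{lem:arnoldi-relation}) doing all of the real work. Write $\mat{B}$ for the matrix that appears under the sum, i.e. the $j \times n$ matrix all of whose rows vanish except the last, which equals $h_{j, j + 1} q_{j + 1}\tran$; this is exactly the error term in the Arnoldi relation applied with $\mat{M} = \mat{P}$, so that relation reads
\[
    \mat{H}_j \mat{Q}_j = \mat{Q}_j \mat{P} - \mat{B}.
\]
First I would record that this identity is available no matter how the Arnoldi iteration of Definition~\ref{def:arnoldi-aggr} terminated: if it reached line~10 it is Lemma~\ref{lem:arnoldi-relation} verbatim, while if it returned at line~9 then $h_{j, j + 1} = 0$, so $\mat{B} = \mathbf{0}$ and the identity is Corollary~\ref{cor:arnoldi-iteration-zero-case}. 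With $\mat{B}$ so named, the claimed equation becomes $\pi_0\tran \mat{H}_j^k \mat{Q}_j = \pi_0\tran \mat{Q}_j \mat{P}^k - \sum_{i = 0}^{k - 1} \pi_0\tran \mat{H}_j^i \mat{B} \mat{P}^{k - 1 - i}$.

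For the base case $k = 0$ both sides equal $\pi_0\tran \mat{Q}_j$, since $\mat{H}_j^0 = \mat{P}^0 = \mat{I}$ and the sum is empty. For the inductive step, assuming the formula for $k$, I would peel off one factor of $\mat{H}_j$ and substitute the Arnoldi relation:
\[
    \pi_0\tran \mat{H}_j^{k + 1} \mat{Q}_j = \pi_0\tran \mat{H}_j^{k}\bigl(\mat{H}_j \mat{Q}_j\bigr) = \bigl(\pi_0\tran \mat{H}_j^{k} \mat{Q}_j\bigr)\mat{P} - \pi_0\tran \mat{H}_j^{k}\mat{B}.
\]
Into the first term I plug the induction hypothesis, then multiply the resulting sum on the right by $\mat{P}$ (which changes each exponent $k - 1 - i$ to $k - i$), and finally rewrite the leftover term as $-\pi_0\tran \mat{H}_j^{k}\mat{B}\mat{P}^{0}$, so that it becomes the $i = k$ summand of a sum now running from $0$ to $k$. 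Collecting everything yields $\pi_0\tran \mat{Q}_j \mat{P}^{k + 1} - \sum_{i = 0}^{k} \pi_0\tran \mat{H}_j^{i}\mat{B}\mat{P}^{k - i}$, which is precisely the asserted formula for $k + 1$.

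There is no genuinely hard step. The only points that need a little care are the index bookkeeping in the telescoping-style sum (shifting the powers of $\mat{P}$ and extending the summation range to absorb the new boundary term) and the preliminary remark that $\mat{H}_j \mat{Q}_j = \mat{Q}_j \mat{P} - \mat{B}$ holds in both termination cases of the algorithm; everything else is routine associativity and distributivity of matrix multiplication.
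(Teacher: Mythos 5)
Your proposal is correct and follows essentially the same route as the paper: induction on $k$ with the Arnoldi relation $\mat{H}_j \mat{Q}_j = \mat{Q}_j \mat{P} - \mat{B}$ substituted in the inductive step, the induction hypothesis applied to the surviving term, and the leftover boundary term absorbed as the $i = k$ summand. Your extra remark that the relation also holds when the iteration returns at line~9 (since then $\mat{B} = \mathbf{0}$) is a small but welcome clarification the paper leaves implicit.
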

\begin{proof}
    Again, this proof is done by induction on $k$.
    Starting with $k = 0$, it is true:
    \[
        \pi_0\tran \mat{H}_j^0 \mat{Q}_j = \pi_0\tran \mat{Q}_j = \pi_0\tran \mat{Q}_j \mat{P}^0 - 0 = \pi_0\tran \mat{Q}_j \mat{P}^0 - \sum_{i = 0}^{-1} \pi_0\tran \mat{H}_j^i
        \underbrace{\left( \begin{array}{c}
                               \scalebox{1.33}{$\mathbf{0}$} \\
                               \hpipe \, h_{j, j + 1}q_{j + 1}\tran \, \hpipe
        \end{array} \right)}_{\eqqcolon \mat{E}}
        \mat{P}^{k - 1 - i}.
    \]
    For some fixed $k$, set the \ac{IH} as
    \[
        \pi_0\tran \mat{H}_j^k \mat{Q}_j = \pi_0\tran \mat{Q}_j \mat{P}^k - \sum_{i = 0}^{k - 1} \pi_0\tran \mat{H}_j^i \mat{E} \mat{P}^{k - 1 - i}.
    \]
    Lastly, we conclude this proof by showing
    \begin{align*}
        \pi_0\tran \mat{H}_j^{k + 1} \mat{Q}_j &\txteq{Lem.~\ref{lem:arnoldi-relation}} \pi_0\tran \mat{H}_j^k \left(\mat{Q}_j \mat{P} - \mat{E} \right)\\
        &=  \pi_0\tran \mat{H}_j^k\mat{Q}_j \mat{P} - \pi_0\tran \mat{H}_j^k \mat{E}\\
        &\txteq{\ac{IH}} \left(\pi_0\tran \mat{Q}_j \mat{P}^k - \sum_{i = 0}^{k - 1} \pi_0\tran \mat{H}_j^i \mat{E}
        \mat{P}^{k - 1 - i}\right)\mat{P} - \pi_0\tran \mat{H}_j^k \mat{E}\\
        &= \pi_0\tran \mat{Q}_j \mat{P}^{k + 1} - \sum_{i = 0}^{k - 1} \pi_0\tran \mat{H}_j^i \mat{E}
        \mat{P}^{k - i} - \pi_0\tran \mat{H}_j^k \mat{E}\\
        &= \pi_0\tran \mat{Q}_j \mat{P}^{k + 1} - \sum_{i = 0}^k \pi_0\tran \mat{H}_j^i \mat{E}
        \mat{P}^{k - i}.\qedhere
    \end{align*}
\end{proof}
\begin{lemma}\label{lem:arnoldi-aggr-k0-exact}
An Arnoldi aggregation with state space size $j \in \N \setminus \set{0}$ is $(j - 1)$-exact.
\end{lemma}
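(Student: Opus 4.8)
The plan is to deduce the statement directly from Propositions~\ref{prop:formula-arnoldi-aggr-error-basic} and~\ref{prop:arnoldi-aggr-pik-form}. By Definition~\ref{def:k-exact-aggr}, being $(j-1)$-exact means $\pi_0\tran \mat{H}_j^k \mat{Q}_j = p_0\tran \mat{P}^k$ for every $k \in \N$ with $k \leq j-1$, so I would fix such a $k$. First I would note that the choice $\pi_0 = (\norm{p_0}_2, 0, \dots, 0)\tran$ from Definition~\ref{def:arnoldi-aggr}, combined with line~2 of Algorithm~\ref{alg:arnoldi-iteration} giving $q_1 = p_0 / \norm{p_0}_2$, yields $\pi_0\tran \mat{Q}_j = \norm{p_0}_2\, q_1\tran = p_0\tran$, hence $\pi_0\tran \mat{Q}_j \mat{P}^k = p_0\tran \mat{P}^k$. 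Plugging this into Proposition~\ref{prop:formula-arnoldi-aggr-error-basic}, the claim reduces to showing that the correction term $\sum_{i=0}^{k-1} \pi_0\tran \mat{H}_j^i \mat{E}\, \mat{P}^{k-1-i}$ vanishes, where $\mat{E}$ denotes the $j \times n$ matrix all of whose rows are zero except the last, which equals $h_{j,j+1} q_{j+1}\tran$ (as in the proof of that proposition).

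For the core step, I would observe that $\pi_0\tran \mat{H}_j^i = \pi_i\tran$ by Definition~\ref{def:aggr-trans-distr}, and that in the sum above the index satisfies $i \leq k-1 \leq j-2$. By Proposition~\ref{prop:arnoldi-aggr-pik-form} --- with the case $i = 0$ read off Definition~\ref{def:arnoldi-aggr} --- such a $\pi_i$ has the form $(\lambda_1, \dots, \lambda_{i+1}, 0, \dots, 0)\tran$ with at least one trailing zero, so in particular $\pi_i(j) = 0$. Since $\mat{E}$ has support only in its $j$-th row, $\pi_i\tran \mat{E} = \pi_i(j)\, h_{j,j+1} q_{j+1}\tran = \mathbf{0}$, so every summand vanishes and the correction term is zero. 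Therefore $\pi_0\tran \mat{H}_j^k \mat{Q}_j = p_0\tran \mat{P}^k$, which is exactly $(j-1)$-exactness.

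I do not expect a serious obstacle here; the one point requiring care is that the index $i$ in the correction sum never reaches $j-1$ --- this is precisely why the bound $k \leq j-1$ is imposed, and the argument genuinely fails at $k = j$, where $\pi_j$ may have a nonzero last entry. It also remains to note the two degenerate situations, both immediate: if $j = 1$ then only $k = 0$ occurs and the sum is empty, and if Algorithm~\ref{alg:arnoldi-iteration} returns at line~9 then $h_{j,j+1} = 0$, so $\mat{E} = \mathbf{0}$ and the correction term vanishes outright (making the aggregation in fact exact). Thus the proof amounts to assembling the earlier propositions.
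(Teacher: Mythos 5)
Your proof is correct and follows essentially the same route as the paper: apply Proposition~\ref{prop:formula-arnoldi-aggr-error-basic}, use Proposition~\ref{prop:arnoldi-aggr-pik-form} to see that $\pi_i(j) = 0$ for $i \leq j - 2$ so the correction sum vanishes, and then conclude via $\pi_0\tran \mat{Q}_j = \norm{p_0}_2\, q_1\tran = p_0\tran$. Your added remarks on the degenerate cases and on why the bound $k \leq j-1$ is sharp are fine but not needed beyond what the paper's argument already covers.
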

\begin{proof}
    For any $i \in \N$, $i \leq j - 2$ we have by Proposition~\ref{prop:arnoldi-aggr-pik-form} $\pi_i(j) = 0$.
    So we get
    \begin{equation}
        \pi_i\tran
        \left( \begin{array}{c}
                   \scalebox{1.33}{$\mathbf{0}$}\\
                   \hpipe \, h_{j, j + 1}q_{j + 1}\tran \, \hpipe
        \end{array} \right)
        = (0, \dots, 0).\label{eq:pi-k-E-is-zero}
    \end{equation}
    Using Proposition~\ref{prop:formula-arnoldi-aggr-error-basic}, we get for any $k \in \N$, $k \leq j -1$ that
    \begin{align*}
        \pi_0\tran \mat{H}_j^k \mat{Q}_j &= \pi_0\tran \mat{Q}_j \mat{P}^k - \sum_{i = 0}^{k - 1} \pi_0\tran \mat{H}_j^i
        \left( \begin{array}{c}
                   \scalebox{1.33}{$\mathbf{0}$}\\
                   \hpipe \, h_{j, j + 1}q_{j + 1}\tran \, \hpipe
        \end{array} \right)
        \mat{P}^{k - 1 - i}\\
        &\txteq{Def.~\ref{def:aggr-trans-distr},~(\ref{eq:pi-k-E-is-zero})} \pi_0\tran \mat{Q}_j \mat{P}^k - \sum_{i = 0}^{k - 1} (0, \dots, 0) \mat{P}^{k - 1 - i}\\
        &= \pi_0\tran \mat{Q}_j \mat{P}^k \txteq{Def.~\ref{def:arnoldi-aggr}} \norm{p_0}_2 \cdot q_1\tran \mat{P}^k \txteq{Alg.~\ref{alg:arnoldi-iteration}} p_0\tran \mat{P}^k.\qedhere
    \end{align*}
\end{proof}
\begin{lemma}\label{lem:arnoldi-aggr-k0-geq-n-exact}
With $n \in \N \setminus \set{0}$, any Arnoldi aggregation where the inputs are $p_0 \in \R^n$, $\mat{P} \in \R^{n \times n}$ and $m \in \N$, $m \geq n$ is exact.
\end{lemma}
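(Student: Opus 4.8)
The plan is to combine Lemma \ref{lem:arnoldi-aggr-k0-exact} with the termination behaviour established in Proposition \ref{prop:arnoldi-k-n1-terminate-9}, upgrading "$(j-1)$-exact" to "exact" by exploiting the fact that with $m \geq n$ the Arnoldi iteration always returns at line 9, which forces the error term $\mat{E}$ to vanish entirely rather than just on the first $j-1$ steps. First I would invoke Proposition \ref{prop:arnoldi-k-n1-terminate-9} to conclude that Algorithm \ref{alg:arnoldi-iteration} returns at line 9, so that $h_{j, j+1} = 0$ and hence the matrix $\mat{E} = \left(\begin{smallmatrix}\mathbf{0}\\ h_{j,j+1} q_{j+1}\tran\end{smallmatrix}\right)$ is the zero matrix. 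Alternatively, and perhaps more cleanly, I would use Corollary \ref{cor:arnoldi-iteration-zero-case}, which directly gives $\mat{H}_j \mat{Q}_j = \mat{Q}_j \mat{P}$ in this case — this is precisely the dynamic-exactness condition $\mat{\Pi}\mat{A} = \mat{A}\mat{P}$ from Definition \ref{def:dyn-exact-aggr}.

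Given $\mat{H}_j \mat{Q}_j = \mat{Q}_j \mat{P}$, the remaining work is to check the second condition for exactness, namely $\pi_0\tran \mat{Q}_j = p_0\tran$. This follows exactly as in the final chain of equalities in the proof of Lemma \ref{lem:arnoldi-aggr-k0-exact}: by Definition \ref{def:arnoldi-aggr} we have $\pi_0\tran \mat{Q}_j = \norm{p_0}_2 \cdot q_1\tran$, and by line 2 of Algorithm \ref{alg:arnoldi-iteration} we have $q_1 = p_0 / \norm{p_0}_2$, so $\pi_0\tran \mat{Q}_j = p_0\tran$. Then Definition \ref{def:dyn-exact-aggr} directly yields that the Arnoldi aggregation is exact. (If one instead wishes to argue via Proposition \ref{prop:formula-arnoldi-aggr-error-basic}, one observes that with $\mat{E} = \mathbf{0}$ the sum term vanishes for every $k \in \N$, giving $\pi_0\tran \mat{H}_j^k \mat{Q}_j = \pi_0\tran \mat{Q}_j \mat{P}^k = p_0\tran \mat{P}^k$ for all $k$, which is exactness together with the $k=0$ case; but invoking Corollary \ref{cor:arnoldi-iteration-zero-case} is shorter.)

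I do not expect a genuine obstacle here — the lemma is essentially a corollary of machinery already assembled. The one point requiring a moment of care is making sure the hypotheses line up: Proposition \ref{prop:arnoldi-k-n1-terminate-9} is stated for $m \geq n$, which is exactly the hypothesis of this lemma, so the return at line 9 is guaranteed and the output indices $j$ on which $\mat{H}_j$, $\mat{Q}_j$ are defined are well-formed. The mild subtlety worth a sentence in the write-up is that "$m \geq n$" guarantees termination at line 9 for \emph{some} $j \leq n$, not necessarily $j = n$; but nothing in the argument depends on the particular value of $j$, only on the fact that we exited at line 9, so this causes no trouble.
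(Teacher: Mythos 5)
Your proposal is correct and follows essentially the same route as the paper: invoke Proposition~\ref{prop:arnoldi-k-n1-terminate-9} to guarantee the return at line 9, apply Corollary~\ref{cor:arnoldi-iteration-zero-case} to obtain $\mat{H}_j \mat{Q}_j = \mat{Q}_j \mat{P}$ (the dynamic-exactness condition of Definition~\ref{def:dyn-exact-aggr}), and then verify $\pi_0\tran \mat{Q}_j = p_0\tran$ directly from Definition~\ref{def:arnoldi-aggr} and line 2 of Algorithm~\ref{alg:arnoldi-iteration}. Your remark that $m \geq n$ only guarantees termination at line 9 for some $j \leq n$ rather than $j = n$ is a sensible point of care, and the argument indeed does not depend on the particular value of $j$.
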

\begin{proof}
    We know from Proposition~\ref{prop:arnoldi-k-n1-terminate-9} that by choosing $m \geq n$, we will return at line 9 in Algorithm~\ref{alg:arnoldi-iteration}.
    So, with Corollary~\ref{cor:arnoldi-iteration-zero-case}, we have the special case of $\mat{H}_j \mat{Q}_j = \mat{Q}_j \mat{A}$ for the Arnoldi relation.
    With this, we have already shown the first condition by Definition~\ref{def:dyn-exact-aggr} for the Aggregation to be exact.
    The second condition is shown through
    \[
        \pi_0\tran \mat{Q}_j \txteq{Def.~\ref{def:arnoldi-aggr}} \norm{p_0}_2 \cdot q_1\tran \txteq{Alg.~\ref{alg:arnoldi-iteration}} \norm{p_0}_2 \frac{p_0\tran}{\norm{p_0}_2} = p_0\tran.\qedhere
    \]
\end{proof}
\begin{lemma}\label{lem:k0-arnoldi-aggr-is-minimal}
Given an initial distribution $p_0 \in \R^n$, transition matrix $\mat{P} \in \R^{n \times n}$ and $j \in \N \setminus \set{0}$, there is no $(j - 1)$-exact aggregation of $p_0$ and $\mat{P}$ with a smaller state space than an Arnoldi aggregation with state space size $j$.
\end{lemma}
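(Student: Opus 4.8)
The plan is to reduce everything to a statement about the dimension of the Krylov subspace $\mathcal{K}^j(p_0, \mat{P})$. First I would pin down that dimension: by Definition~\ref{def:arnoldi-aggr} and Lemma~\ref{lem:arnoldi-relation}, an Arnoldi aggregation with state space size $j$ has disaggregation matrix $\mat{Q}_j$ whose rows are $q_1\tran, \dots, q_j\tran$. These vectors are orthonormal — hence linearly independent — and $\spn\set{q_1\tran, \dots, q_j\tran} = \mathcal{K}^j(p_0, \mat{P})$: this is Proposition~\ref{prop:arnoldi-output-orthonorm-krylov} if Algorithm~\ref{alg:arnoldi-iteration} returned at line 9 in iteration $j$, and Corollary~\ref{cor:arnoldi-iter-line11-span} (applied with $j = m + 1$) if it returned at line 11. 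Either way, $\dim \mathcal{K}^j(p_0, \mat{P}) = j$.

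Next I would take an arbitrary $(j - 1)$-exact aggregation of $p_0$ and $\mat{P}$, say with disaggregation matrix $\mat{A} \in \R^{m \times n}$, aggregated step matrix $\mat{\Pi} \in \R^{m \times m}$ and aggregated initial vector $\pi_0 \in \R^m$, and show $m \geq j$. By Definition~\ref{def:k-exact-aggr}, for every $i \in \N$ with $i \leq j - 1$ we have $p_0\tran \mat{P}^i = \pi_0\tran \mat{\Pi}^i \mat{A}$; the right-hand side is a linear combination of the rows of $\mat{A}$, so $p_0\tran \mat{P}^i \in \rowsp(\mat{A})$. Taking the span over $i = 0, \dots, j - 1$ yields $\mathcal{K}^j(p_0, \mat{P}) \subseteq \rowsp(\mat{A})$, and since $\rowsp(\mat{A})$ is spanned by the $m$ rows of $\mat{A}$ this gives $j = \dim \mathcal{K}^j(p_0, \mat{P}) \leq \dim \rowsp(\mat{A}) \leq m$. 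Hence the state space size $m$ is at least $j$, i.e.\ no $(j-1)$-exact aggregation has a smaller state space than the Arnoldi aggregation with state space size $j$.

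The argument is short, and I do not expect a genuine obstacle; the only point that needs care is the case distinction in the first step between the two exit points of the Arnoldi iteration, together with the (routine) observation that when it returns at line 11 the $m + 1$ returned vectors are still orthonormal, so that their span really has dimension $m + 1 = j$. Everything else is bookkeeping about row spaces.
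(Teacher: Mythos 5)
Your proposal is correct and follows essentially the same route as the paper's proof: $(j-1)$-exactness forces $p_0\tran\mat{P}^0,\dots,p_0\tran\mat{P}^{j-1}$ into $\rowsp(\mat{A})$, the Arnoldi vectors $q_1\tran,\dots,q_j\tran$ form an orthonormal (hence linearly independent) basis of that Krylov subspace, and so $m \geq \dim\rowsp(\mat{A}) \geq j$. If anything, your version is slightly tidier than the paper's, since you explicitly handle both exit points of Algorithm~\ref{alg:arnoldi-iteration} and only use $\dim\rowsp(\mat{A}) \leq m$ rather than equating the state space size with the row-space dimension.
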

\begin{proof}
    Assume that an aggregation given through $\mat{\Pi} \in \R^{m \times m}$, $\mat{A} \in \R^{m \times n}$ and $\pi_0 \in \R^m$ is $(j - 1)$-exact.
    By definition, this means that for $k \in \N$, that
    \begin{align*}
        \forall k \leq j - 1 :\ & \pi_0\tran \mat{\Pi}^k \mat{A} = p_0\tran \mat{P}^k\\
        \iff & \pi_k\tran \mat{A} = p_k\tran\\
        \iff & \sum_{i = 1}^m \pi_k(i) (\mat{A}(i,1), \dots, \mat{A}(i,n)) = p_k\tran.
    \end{align*}
    This linear combination means $p_0\tran, \dots, p_{j - 1}\tran$ are all elements of the row space of $\mat{A}$, thus with \enquote{$\leq$} denoting the subspace relation,
    \begin{align*}
        & \spn\Set{p_0\tran, \dots, p_{j - 1}\tran} \leq \rowsp(\mat{A})\\
        \txtimplies{Prop.~\ref{prop:arnoldi-output-orthonorm-krylov}} & \spn\Set{q_1\tran, \dots, q_j\tran} \leq \rowsp(\mat{A}).
    \end{align*}
    As such, the state space size, being the same as the dimension of $\rowsp(\mat{A})$, is minimal if, and only if, $\spn\set{q_1\tran, \dots, q_j\tran} = \rowsp(\mat{A})$.
    Again, through Proposition.~\ref{prop:arnoldi-output-orthonorm-krylov} we know that $q_1, \dots, q_j$ are pairwise linearly independent, meaning we must have $m = \dim(\rowsp(\mat{A})) = j$, which the disaggregation matrix $\mat{Q}_j$ of the Arnoldi aggregation precisely achieves.
    Thus, the row space of the disaggregation matrix in the Arnoldi aggregation is minimal, meaning the state space is also.
\end{proof}
\begin{lemma}\label{lem:k0-geq-n-arnoldi-aggr-is-minimal}
Given an initial distribution $p_0 \in \R^n$, transition matrix $\mat{P} \in \R^{n \times n}$ and $m \geq n$, there is no exact aggregation with a smaller state space than the state space of the Arnoldi aggregation of size $j \in \N$ with integer input $m$.
\end{lemma}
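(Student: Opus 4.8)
The plan is to reduce this to Lemma~\ref{lem:k0-arnoldi-aggr-is-minimal} via the observation that every \emph{exact} aggregation is in particular $(j-1)$-exact. First I would recall that any exact aggregation of $p_0$ and $\mat{P}$ satisfies $\pi_0\tran\mat{\Pi}^k\mat{A} = p_0\tran\mat{P}^k$ for all $k \in \N$. This is precisely the first conclusion of Proposition~\ref{prop:exact-aggr-implies-k-exact}; I would invoke only the portion of its proof that does not use the existence of an aggregated stationary distribution (equivalently, reprove the one-line induction $\mat{\Pi}\mat{A} = \mat{A}\mat{P} \implies \mat{\Pi}^k\mat{A} = \mat{A}\mat{P}^k$ and combine it with $\pi_0\tran\mat{A} = p_0$), so as to avoid importing that hypothesis unnecessarily. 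In particular the identity holds for all $k \leq j-1$, so by Definition~\ref{def:k-exact-aggr} any exact aggregation is $(j-1)$-exact.

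Next I would pin down the index $j$: running Algorithm~\ref{alg:arnoldi-iteration} on inputs $p_0$, $\mat{P}$ and $m$ with $m \geq n$ returns at line~9 by Proposition~\ref{prop:arnoldi-k-n1-terminate-9}, so the termination index $j$ is well-defined and the Arnoldi aggregation of Definition~\ref{def:arnoldi-aggr} has state space size $j$; by Lemma~\ref{lem:arnoldi-aggr-k0-geq-n-exact} this aggregation is moreover exact, which is what makes the bound attained. Now I would apply Lemma~\ref{lem:k0-arnoldi-aggr-is-minimal} with exactly this $j$: no $(j-1)$-exact aggregation of $p_0$ and $\mat{P}$ has a state space smaller than $j$.

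Combining the two steps finishes the argument: the class of exact aggregations of $p_0$ and $\mat{P}$ is contained in the class of $(j-1)$-exact ones, and no member of the latter class has state space smaller than $j$, hence no exact aggregation does either; since $j$ is the state space size of the Arnoldi aggregation with input $m$, that is the claim.

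I do not expect a serious obstacle here. The one point needing care is bookkeeping: the ``$j$'' fed into Lemma~\ref{lem:k0-arnoldi-aggr-is-minimal} must be the actual termination index of the Arnoldi run on input $m$ (guaranteed to exist by Proposition~\ref{prop:arnoldi-k-n1-terminate-9}), and one must note that ``exact'' being strictly stronger than ``$(j-1)$-exact'' is exactly what lets the minimality bound for the weaker class transfer to the stronger one. The only other subtlety is the stationary-distribution hypothesis attached to Proposition~\ref{prop:exact-aggr-implies-k-exact}, which I would sidestep as described above.
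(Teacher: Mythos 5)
Your argument is correct; the only point that needed care — that the transient part of Proposition~\ref{prop:exact-aggr-implies-k-exact} does not require an aggregated stationary distribution — you handle explicitly, and the bookkeeping of $j$ as the line-9 termination index (guaranteed by Proposition~\ref{prop:arnoldi-k-n1-terminate-9}) is done properly. Your route differs mildly from the paper's: you reduce the statement to Lemma~\ref{lem:k0-arnoldi-aggr-is-minimal} as a black box, via the containment ``exact $\implies (j-1)$-exact'', so you only need the first $j$ vectors $p_0\tran,\dots,p_{j-1}\tran$ to lie in $\rowsp(\mat{A})$ and you never touch the invariance of the Krylov subspace. The paper instead re-runs the span argument of that lemma: it uses exactness to place the \emph{entire} trajectory $\spn\set{p_0\tran, p_1\tran, \dots}$ inside $\rowsp(\mat{A})$ and then invokes Propositions~\ref{prop:arnoldi-output-orthonorm-krylov}, \ref{prop:arnoldi-iter-l9-invariant} and~\ref{prop:arnoldi-k-n1-terminate-9} to identify $q_1\tran,\dots,q_j\tran$ as a basis of that full invariant subspace. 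Both yield the same dimension lower bound $j$; yours is the more economical derivation (no repetition of the span argument, no appeal to Proposition~\ref{prop:arnoldi-iter-l9-invariant}), while the paper's makes explicit the structural fact that all transient distributions, not just the first $j$, live in the invariant Krylov subspace spanned by the Arnoldi vectors. Your side remark that Lemma~\ref{lem:arnoldi-aggr-k0-geq-n-exact} shows the bound is attained is not needed for the minimality claim itself, but it is harmless and matches how Theorem~\ref{thrm:arnoldi-aggr-is-smallest-exact} later combines the two lemmas.
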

\begin{proof}
    Almost analogous to the proof of Lemma~\ref{lem:k0-arnoldi-aggr-is-minimal}, we choose any exact aggregation with $\mat{\Pi} \in \R^{\ell \times \ell}$, $\mat{A} \in \R^{\ell \times n}$ and $\pi_0 \in \R^\ell$ as its aggregated step matrix, disaggregation matrix, and aggregated initial distribution.
    Now, as we have exactness and not just $k$-exactness, we know by Proposition~\ref{prop:exact-aggr-implies-k-exact} that $\spn\set{p_0\tran, p_1\tran, \dots} \leq \rowsp(\mat{A})$.
    Again, by using Proposition.~\ref{prop:arnoldi-output-orthonorm-krylov} and additionally Propositions~\ref{prop:arnoldi-iter-l9-invariant} and~\ref{prop:arnoldi-k-n1-terminate-9}, we know that $q_1\tran, \dots q_j\tran$ form a basis of $\rowsp(\mat{A})$.
    With the same argument as in Lemma~\ref{lem:k0-arnoldi-aggr-is-minimal}, we have at least $j$ states in an exact aggregation.
\end{proof}
\begin{theorem}\label{thrm:arnoldi-aggr-is-smallest-k-1-exact}
If the Arnoldi iteration with integer input $m \in \N \setminus \set{0}$, used for computing the Arnoldi aggregation of a given Markov chain returns at line 11 in Algorithm~\ref{alg:arnoldi-iteration}, we get a $(m - 1)$-exact aggregation with the smallest possible state space size.
\end{theorem}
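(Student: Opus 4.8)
The plan is to obtain the theorem as a direct consequence of Lemmas~\ref{lem:arnoldi-aggr-k0-exact} and~\ref{lem:k0-arnoldi-aggr-is-minimal}, once the state space size of the aggregation produced in this situation has been pinned down. So first I would unpack what the hypothesis of returning at line~11 means for the run of Algorithm~\ref{alg:arnoldi-iteration} on inputs $m$, $p_0$ and $\mat{P}$. Reaching line~11 forces the outer for-loop to have completed all passes $j = 1, \dots, m$ without ever triggering the early return at line~9, so $h_{j, j+1} \neq 0$ for every $1 \le j \le m$, the vector $q_{m+1}$ is well-defined, and, by Lemma~\ref{lem:arnoldi-relation}, the matrices $\mat{H}_m$ and $\mat{Q}_m$ are exactly the ones appearing in the Arnoldi relation, with the genuinely nonzero correction row $h_{m, m+1} q_{m+1}\tran$. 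By Definition~\ref{def:arnoldi-aggr}, the Arnoldi aggregation is then the aggregation with aggregated step matrix $\mat{\Pi} = \mat{H}_m$, disaggregation matrix $\mat{A} = \mat{Q}_m$ and aggregated initial vector $\pi_0 = (\norm{p_0}_2, 0, \dots, 0)\tran \in \R^m$; in particular its state space size is $j = m$.

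With the dimension fixed, both halves of the claim are immediate. For exactness, Lemma~\ref{lem:arnoldi-aggr-k0-exact} applied with $j = m$ gives that an Arnoldi aggregation of state space size $m$ is $(m - 1)$-exact, which is the first assertion. For minimality, Lemma~\ref{lem:k0-arnoldi-aggr-is-minimal} applied with $j = m$ says that no $(m - 1)$-exact aggregation of $p_0$ and $\mat{P}$ has a state space strictly smaller than that of an Arnoldi aggregation of size $m$; since our aggregation is itself $(m - 1)$-exact and has size $m$, this size is the smallest attainable by any $(m - 1)$-exact aggregation. Combining the two statements proves the theorem.

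I do not expect a real obstacle: all the substantive work — the closed form for the aggregated transient distributions in Proposition~\ref{prop:arnoldi-aggr-pik-form}, the error recursion of Proposition~\ref{prop:formula-arnoldi-aggr-error-basic}, and the row-space versus Krylov-dimension counting behind Lemma~\ref{lem:k0-arnoldi-aggr-is-minimal} — has already been carried out. The one point that needs a little care is the identification in the first paragraph: that returning at line~11 forces the aggregation to have dimension exactly $m$, and not $m + 1$, even though $q_{m+1}$ is among the vectors the algorithm returns, because the aggregation is assembled from $\mat{H}_m$ and $\mat{Q}_m$, the last matrices the algorithm actually produces before the terminal return. Once this is made explicit, the theorem is just the conjunction of the two cited lemmas.
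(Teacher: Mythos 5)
Your proposal is correct and follows essentially the same route as the paper: identify the state space size of the resulting Arnoldi aggregation as $m$ (the paper does this via Corollary~\ref{cor:arnoldi-iter-line11-span}, you argue directly from the algorithm's structure), then invoke Lemma~\ref{lem:arnoldi-aggr-k0-exact} for $(m-1)$-exactness and Lemma~\ref{lem:k0-arnoldi-aggr-is-minimal} for minimality. The extra care you take in distinguishing $\mat{H}_m,\mat{Q}_m$ from the returned $q_{m+1}$ is a reasonable clarification but does not change the argument.
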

\begin{proof}
    If we return at line 11, we have a state space size of $m$ in the Arnoldi aggregation by Corollary~\ref{cor:arnoldi-iter-line11-span}.
    By Lemma~\ref{lem:arnoldi-aggr-k0-exact}, this aggregation is $(m - 1)$ exact.
    Lastly, Lemma~\ref{lem:k0-arnoldi-aggr-is-minimal} tells us that the Arnoldi aggregation also has minimal state space.
\end{proof}
\begin{theorem}\label{thrm:arnoldi-aggr-is-smallest-exact}
For $n \in \N \setminus \set{0}$, the Arnoldi iteration with the inputs $p_0 \in \R^n$, $\mat{P} \in \R^{n \times n}$, and $m \in \N$, $m \geq n$, used to compute the Arnoldi aggregation of the given Markov chain, yields an exact aggregation with the smallest possible state space size.
\end{theorem}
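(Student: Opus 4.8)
The plan is to obtain this theorem as an immediate consequence of the two lemmas already established for the exact case, exactly mirroring how Theorem~\ref{thrm:arnoldi-aggr-is-smallest-k-1-exact} follows from Lemmas~\ref{lem:arnoldi-aggr-k0-exact} and~\ref{lem:k0-arnoldi-aggr-is-minimal}. First I would observe that, since $m \geq n$, Proposition~\ref{prop:arnoldi-k-n1-terminate-9} guarantees that the Arnoldi iteration returns at line~9, so that the Arnoldi aggregation is well-defined with some state space size $j \in \N \setminus \set{0}$ (with $j \leq n$), and $\mat{H}_j$, $\mat{Q}_j$ together with $\pi_0 = (\norm{p_0}_2, 0, \dots, 0)\tran$ are the objects returned.

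Next, I would invoke Lemma~\ref{lem:arnoldi-aggr-k0-geq-n-exact}, which states precisely that any Arnoldi aggregation with integer input $m \geq n$ is exact; this settles the first half of the claim. For the minimality half, I would invoke Lemma~\ref{lem:k0-geq-n-arnoldi-aggr-is-minimal}, which asserts that no exact aggregation of $p_0$ and $\mat{P}$ has a state space strictly smaller than the size $j$ of this Arnoldi aggregation. Combining the two statements gives that the Arnoldi aggregation is exact and has minimal state space size, which is exactly what the theorem asserts.

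The main obstacle here is essentially nonexistent: all the substantive work has already been done in the referenced lemmas (and, underneath, in Proposition~\ref{prop:exact-aggr-implies-k-exact}, Proposition~\ref{prop:arnoldi-output-orthonorm-krylov}, and Proposition~\ref{prop:arnoldi-iter-l9-invariant}). The only points requiring a little care are bookkeeping ones: making sure the hypothesis $m \geq n$ is consistently assumed so that the $j$ appearing in the minimality lemma is the size the algorithm actually produces, and checking that the phrase \enquote{smallest possible state space size} in the theorem statement matches the contrapositive-style minimality conclusion of Lemma~\ref{lem:k0-geq-n-arnoldi-aggr-is-minimal}. With that, the proof is just the sentence \enquote{apply Lemmas~\ref{lem:arnoldi-aggr-k0-geq-n-exact} and~\ref{lem:k0-geq-n-arnoldi-aggr-is-minimal}.}
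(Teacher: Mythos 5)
Your proposal matches the paper's proof exactly: the paper also proves this theorem by combining Lemma~\ref{lem:arnoldi-aggr-k0-geq-n-exact} (exactness for $m \geq n$) with Lemma~\ref{lem:k0-geq-n-arnoldi-aggr-is-minimal} (minimality of the state space size). Your extra remark that Proposition~\ref{prop:arnoldi-k-n1-terminate-9} guarantees the return at line 9 is a harmless bookkeeping point already contained in those lemmas' proofs.
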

\begin{proof}
    Here, we combine Lemma~\ref{lem:arnoldi-aggr-k0-geq-n-exact} and~\ref{lem:k0-geq-n-arnoldi-aggr-is-minimal}.
\end{proof}

\section{Aggregations under error bounds}\label{sec:aggregations-under-error-bounds}
Now that we have introduced the Arnoldi aggregation in Section~\ref{sec:exact-aggregations}, we want to look at how the error bounds from Section~\ref{sec:error-bounds-on-approximated-transient-distributions} behave.
We are going to see that we cannot improve the bound shown in Proposition~\ref{prop:general-error-bound} in the general case, as well as in a specific case of Arnoldi aggregations.
A Markov chain from the class of nearly completely decomposable Markov chains will provide an example.
Intuitively, such a Markov chain is formed by considering multiple separate Markov chains linked together through transitions with very low probability.

\begin{definition}[\Ac{NCD} Markov chains]
    A \emph{\ac{NCD} Markov chain} is a \ac{DTMC} represented by the transition matrix $\mat{P} \in \R^{n \times n}$, which is of the form
    \[
        \mat{P} \coloneqq \pmat{
            \mat{P}_1 & & & & \\
            & \ddots & & \scalebox{1.33}{$\mathbf{0}$} & \\
            & & \mat{P}_I & & \\
            & \scalebox{1.33}{$\mathbf{0}$} & & \ddots &\\
            & & & & \mat{P}_N
        }
        + \varepsilon \mat{C}.
    \]
    Here, $\mat{P}_1, \dots, \mat{P}_N$ are again transition matrices representing the individual Markov chains.
    Further, let $\mat{C} \in \R^{n \times n}$ and $\varepsilon \in \R_+$, which represent the transitions between these individual Markov chains\cite[114--115]{simon1961ncdchains}.
    We further require all row sums of $\mat{C}$ to be zero with $\mat{C}$ containing $-1$, $0$ or $1$ and $\varepsilon < 1$.
    Additionally, we can have any $n \in \Z$ on the diagonal of $\mat{C}$.
\end{definition}
Usually, $\varepsilon$ is chosen relatively small to preserve the idea of a small transition probability between any two of the Markov chains making up $\mat{P}$.
Naturally; this definition lends itself to the intuitive understanding of an aggregation: Each smaller Markov chain $\mat{P}_I$ can be seen as an aggregate, and two such aggregates can have transition probability $\varepsilon$ between each other.
Still, the error bound presented in Proposition~\ref{prop:general-error-bound} cannot be improved.
\begin{proposition}\label{prop:ncd-bound-counterex}
Using the Arnoldi iteration to aggregate \ac{NCD} Markov chains, Proposition~\ref{prop:general-error-bound} can be fulfilled with equality.
\end{proposition}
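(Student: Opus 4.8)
The idea is to exhibit one explicit NCD Markov chain, together with an explicit Arnoldi aggregation of it, for which every inequality used to derive the bound of Proposition~\ref{prop:general-error-bound} becomes an equality simultaneously. I would take $N = 2$ blocks, each being the trivial $1 \times 1$ transition matrix $\mat{P}_1 = \mat{P}_2 = (1)$, the coupling matrix
\[
    \mat{C} \coloneqq \pmat{-1 & 1 \\ 1 & -1},
\]
and some real $0 < \varepsilon \leq \tfrac{1}{2}$. This $\mat{C}$ has zero row sums, all entries in $\set{-1, 1}$, and the integer $-1$ on its diagonal, so the NCD requirements are met; the resulting transition matrix is the symmetric doubly stochastic
\[
    \mat{P} = \pmat{1 - \varepsilon & \varepsilon \\ \varepsilon & 1 - \varepsilon},
\]
with eigenvalues $1$ and $1 - 2\varepsilon \geq 0$ and eigenvectors $(1, 1)\tran$, $(1, -1)\tran$.

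Next I would fix $p_0 \coloneqq (1, 0)\tran$ and run Algorithm~\ref{alg:arnoldi-iteration} with inputs $m = 1$, $p_0$, $\mat{P}$. Tracing the pseudocode gives $q_1 = (1, 0)\tran$, then $h_{1,1} = 1 - \varepsilon$, $r_2 = (0, \varepsilon)$, hence $h_{1,2} = \varepsilon \neq 0$, $q_2 = (0, 1)\tran$, and the iteration returns at line~11 with state space size $1$. So the Arnoldi aggregation has $\mat{\Pi} = \mat{H}_1 = (1 - \varepsilon)$, $\mat{A} = \mat{Q}_1 = \pmat{1 & 0}$ and $\pi_0 = (1)$. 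By Lemma~\ref{lem:arnoldi-aggr-k0-exact} it is $0$-exact, so $\norm{e_0}_1 = 0$; by Lemma~\ref{lem:arnoldi-relation} we get $\mat{\Pi}\mat{A} - \mat{A}\mat{P} = -h_{1,2} q_2\tran = (0, -\varepsilon)$, so $\norm{\mat{\Pi}\mat{A} - \mat{A}\mat{P}}_\infty = \varepsilon$; and $\norm{\mat{\Pi}}_\infty = 1 - \varepsilon \neq 1$ while $\norm{\pi_0}_1 = 1$. Substituting these into Proposition~\ref{prop:general-error-bound} collapses its right-hand side at step $k$ to
\[
    0 + 1 \cdot \varepsilon \cdot \frac{(1 - \varepsilon)^k - 1}{(1 - \varepsilon) - 1} = 1 - (1 - \varepsilon)^k.
\]

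It then remains to compute $\norm{e_k}_1$ exactly and check it equals $1 - (1 - \varepsilon)^k$. From Definition~\ref{def:error-k-vec} and $\pi_0\tran \mat{H}_1^k \mat{Q}_1 = (1 - \varepsilon)^k q_1\tran = (1 - \varepsilon)^k p_0\tran$ we obtain $e_k\tran = (1 - \varepsilon)^k p_0\tran - p_0\tran \mat{P}^k$; diagonalising $\mat{P}$ gives the closed form
\[
    \mat{P}^k = \frac{1}{2}\pmat{1 + (1 - 2\varepsilon)^k & 1 - (1 - 2\varepsilon)^k \\ 1 - (1 - 2\varepsilon)^k & 1 + (1 - 2\varepsilon)^k},
\]
hence $e_k\tran = \bigl((1 - \varepsilon)^k - \tfrac{1}{2}(1 + (1 - 2\varepsilon)^k),\, -\tfrac{1}{2}(1 - (1 - 2\varepsilon)^k)\bigr)$. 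The only subtle point is the sign of the first entry; this is where $\varepsilon \leq \tfrac{1}{2}$ is used, since then $0 \leq 1 - 2\varepsilon \leq 1$, so $(1 - 2\varepsilon)^k \leq 1$ and, by convexity of $t \mapsto t^k$ on $[0, 1]$, $(1 - \varepsilon)^k = \bigl(\tfrac{1 + (1 - 2\varepsilon)}{2}\bigr)^k \leq \tfrac{1}{2}(1 + (1 - 2\varepsilon)^k)$. Both entries of $e_k$ are then nonpositive, so
\[
    \norm{e_k}_1 = \Bigl(\tfrac{1}{2}(1 + (1 - 2\varepsilon)^k) - (1 - \varepsilon)^k\Bigr) + \tfrac{1}{2}(1 - (1 - 2\varepsilon)^k) = 1 - (1 - \varepsilon)^k,
\]
which is exactly the bound, so equality holds for every $k$. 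The main obstacle is precisely this last resolution of the absolute value — it is what forces $\varepsilon \leq \tfrac{1}{2}$; the rest is a routine trace through the Arnoldi pseudocode and an elementary $2 \times 2$ diagonalisation.
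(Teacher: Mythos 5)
Your proposal is correct, and at heart it follows the same strategy as the paper: exhibit one explicit NCD chain together with its one-dimensional Arnoldi aggregation, compute $\norm{e_k}_1$ exactly, and check it coincides with the right-hand side of Proposition~\ref{prop:general-error-bound}. The concrete example differs, though, in an instructive way. The paper takes a three-state chain with $p_0 = (0,0,1)\tran$ engineered so that $p_0\tran\mat{P}$ is orthogonal to $p_0$; this forces $\mat{H}_1 = (0)$, hence $\tilde{p}_k = 0$ for $k \geq 1$, and both the error and the bound are identically $1$ for all $k \geq 1$, so the verification is immediate and needs no sign analysis. Your symmetric two-state chain instead yields $\mat{H}_1 = (1-\varepsilon)$ with $\norm{\mat{H}_1}_\infty \in (0,1)$, and equality $\norm{e_k}_1 = 1-(1-\varepsilon)^k$ holds for every $k$ against a genuinely $k$-dependent bound; the price is the convexity step $(1-\varepsilon)^k \leq \frac{1}{2}\bigl(1 + (1-2\varepsilon)^k\bigr)$, which you justify correctly via convexity of $t \mapsto t^k$ on $[0,1]$ and which is exactly where your restriction $\varepsilon \leq \frac{1}{2}$ enters. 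Your trace of Algorithm~\ref{alg:arnoldi-iteration} ($q_1 = p_0$, $h_{1,1} = 1-\varepsilon$, $h_{1,2} = \varepsilon \neq 0$, normal return with $j = 1$), the identity $\mat{H}_1\mat{Q}_1 - \mat{Q}_1\mat{P} = (0,-\varepsilon)$ from Lemma~\ref{lem:arnoldi-relation}, and the evaluation of the geometric factor as $1-(1-\varepsilon)^k$ are all accurate, so this is a valid proof; if anything it demonstrates tightness in a slightly stronger sense than the paper's degenerate $\norm{\mat{H}_1}_\infty = 0$ case, at the cost of a somewhat longer computation.
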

\begin{proof}
    Let $\mat{P_1} \coloneqq (1) \in \R^{1 \times 1}$, $\mat{P}_2 \in \R^{2 \times 2}$, $p_0 \coloneqq (0, 0, 1)\tran \in \R^3$, $\mat{C} \in \R^{3 \times 3}$ and $\varepsilon \in (0, 1)$.
    We then define
    \[
        \mat{P}_2 = \pmat{1 & 0\\ 1&0},\ \mat{C} \coloneqq \pmat{-1 & 1 & 0\\ 0 & 0 & 0\\ 0 & 0 & 0},\ \text{resulting in}\ \mat{P} = \pmat{1 - \varepsilon & \varepsilon & 0\\ 0 & 1 & 0\\ 0 & 1 & 0}.
    \]
    We now show for $k \geq 1$ that $p_k = (0, 1, 0)\tran$.
    This is clear for $k = 1$ by simple matrix-vector multiplication.
    Then in the inductive step we see
    \[
        p_{k + 1}\tran = p_k\tran \pmat{1 - \varepsilon & \varepsilon & 0\\ 0 & 1 & 0\\ 0 & 1 & 0} = (0, 1, 0) \pmat{1 - \varepsilon & \varepsilon & 0\\ 0 & 1 & 0\\ 0 & 1 & 0} = (0, 1, 0).
    \]
    Now, when again using one as the aggregated state space size, we get, similar to the previous counterexample, that
    \begin{align*}
        &\mat{H}_1 = (0),\ \pi_0 = (1),\ \tilde{p}_k\tran =\begin{cases}
        (0, 0, 1), & \text{if $k = 0$,}\\
        (0, 0, 0), & \text{else,}
        \end{cases}\\
        &\text{and}\ \norm{e_k}_1 =
        \begin{cases}
            0, & \text{if $k = 0$,}\\
            1, & \text{else.}
        \end{cases}
    \end{align*}
    This finally enables us to see the tightness of the error bound:
    \begin{align*}
        \norm*{e_k}_1 &\txteq{Def.~\ref{def:error-k-vec}} \norm*{\tilde{p}_k - p_k}_1 =
        \begin{cases}
            0, & \text{if $k = 0$,}\\
            1, & \text{else.}
        \end{cases} = 0 + 1 \cdot 1 \cdot
        \begin{cases}
            0, & \text{if $k = 0$,}\\
            1, & \text{else.}
        \end{cases}\\
        &= \norm*{e_0}_1 + \norm*{\pi_0}_1 \cdot \norm*{\mat{H}_1 \mat{Q}_1 - \mat{Q}_1 \mat{P}}_\infty \cdot
        \begin{cases}
            \frac{\norm{\mat{H}_1}_\infty^k - 1}{\norm{\mat{H}_1}_\infty - 1}, & \text{if $\norm{\mat{H}_1}_\infty \neq 1$},\\
            k, & \text{else.}
        \end{cases}\qedhere
    \end{align*}
\end{proof}
So, we have seen now in Proposition~\ref{prop:ncd-bound-counterex} that the special case of aggregating NCD Markov chains cannot yield improved error bounds.
Thus, no improvement is possible in the general case either.
Of course, there might still be cases where the error bound can be improved, but we can see with the counterexample that doing so requires stricter conditions.
For example, we chose the counterexample so that $\mat{P}$ maps $p_0$ to an eigenvector orthogonal to $p_0$.
Preventing this is necessary to improve the error bound.
Still, there might be even more requirements.

\chapter{Numerical implementation}\label{ch:numerical-implementation}
Now that we have finished the theoretical framework for Arnoldi aggregations, we examine such aggregations with an implementation of the underlying Arnoldi iteration.
To understand how such an implementation might look, we look at sources of numerical instabilities, ways to combat them, and runtime and memory complexity in Section~\ref{sec:computing-arnoldi-aggregations}.
Then, in Section~\ref{sec:models-and-methodology}, we briefly introduce different Markov chain models, with which we will evaluate the implementation as devised in the previous sections.
Finishing the chapter with Section~\ref{sec:review}, we present the results of the previously defined
experiments and assess them when compared to theoretical results from Chapter~\ref{ch:finding-small-aggregations} or other aggregation methods.
\section{Computing Arnoldi aggregations}\label{sec:computing-arnoldi-aggregations}
To analyse potential improvements, problems and pitfalls when implementing the Arnoldi iteration, we start in Section~\ref{subsec:normalization-of-the-approximated-transient-distribution} by showing that normalizing $\tilde{p}_k$ with respect to the 1-norm can improve $\norm{e_k}_1$.
Then, in Section~\ref{subsec:floating-point-errors}, we examine problems arising from numerical errors in floating-point arithmetic.
Namely, this concerns the gradual loss of orthogonality in a basis built with \ac{CGS}.
We also show how to dampen this loss of orthogonality.
This results in the condition for an early return in Algorithm~\ref{alg:arnoldi-iteration} not to trigger.
Although we cannot prevent this, we present an alternative with some theoretical framework in Section~\ref{subsec:determining-convergence}.
Finally, in Section~\ref{subsec:runtime-and-memory-complexity} we analyse the runtime and memory complexity of computing an Arnoldi aggregation.
\subsection{Normalizing approximated transient distributions}\label{subsec:normalization-of-the-approximated-transient-distribution}
Here, we will introduce a method to improve the error $\norm{e_k}_1$ for arbitrary aggregations with very little computational overhead.
By definition, we want $\tilde{p}_k$ to approximate the probability distribution $p_k$.
So, intuitively speaking, $\norm{\tilde{p}_k}_1$ should be close to one to be a good approximation of $p_k$.
As such, we will look at when normalization of $\tilde{p}_k$ with respect to the 1-norm improves the error, or more formally, we will look at when
\begin{equation}
    \norm*{\frac{\tilde{p}_k}{\norm{\tilde{p}_k}_1} - p_k}_1 \leq \norm{\tilde{p}_k - p_k}_1\label{eq:normalization-helps}
\end{equation}
is fulfilled without knowing more about $p_k$ besides that it is a probability distribution.

Five different possible conditions for $\tilde{p}_k$ are presented, such that the inequality from~(\ref{eq:normalization-helps}) holds for all $p_k$.
Note that only two are proven here; the rest remain as mere conjectures.
\begin{proposition}
    Consider an aggregation of a Markov chain and $k \in \N$.
    If for the approximated transient distribution $\tilde{p}_k$ we have $\norm{\tilde{p}_k}_1 \geq 2$, then~(\ref{eq:normalization-helps}) holds true.
\end{proposition}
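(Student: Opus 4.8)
The plan is to compare the two quantities directly using the triangle inequality, exploiting the fact that $p_k$ is a probability distribution, so $\norm{p_k}_1 = 1$. Write $a \coloneqq \norm{\tilde{p}_k}_1$, so by hypothesis $a \geq 2$. First I would split the left-hand side of~\eqref{eq:normalization-helps} by inserting $\pm \tilde{p}_k$:
\[
    \norm*{\frac{\tilde{p}_k}{a} - p_k}_1 \leq \norm*{\frac{\tilde{p}_k}{a} - \tilde{p}_k}_1 + \norm{\tilde{p}_k - p_k}_1 = \abs*{\frac{1}{a} - 1} \cdot a + \norm{\tilde{p}_k - p_k}_1 = (a - 1) + \norm{\tilde{p}_k - p_k}_1,
\]
using $a \geq 2 > 1$ to drop the absolute value. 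This is not yet good enough, since it only gives an upper bound that is \emph{larger} than $\norm{\tilde{p}_k - p_k}_1$; I need a lower bound on $\norm{\tilde{p}_k - p_k}_1$ itself to close the gap.

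The key observation is a reverse triangle inequality: $\norm{\tilde{p}_k - p_k}_1 \geq \norm{\tilde{p}_k}_1 - \norm{p_k}_1 = a - 1$. Combining the two displays gives $\norm*{\tilde{p}_k/a - p_k}_1 \leq (a-1) + \norm{\tilde{p}_k - p_k}_1$, which is weaker than what we want, so instead I would run the first split in the other direction — estimate $\norm{\tilde{p}_k/a - p_k}_1$ from above by going \emph{through} $p_k$ scaled, or more cleanly, bound the difference of the two sides of~\eqref{eq:normalization-helps} directly. Concretely, by the reverse triangle inequality applied to the pair $\tilde{p}_k - p_k$ and $\tilde{p}_k/a - p_k$,
\[
    \norm{\tilde{p}_k - p_k}_1 - \norm*{\frac{\tilde{p}_k}{a} - p_k}_1 \geq -\norm*{\tilde{p}_k - \frac{\tilde{p}_k}{a}}_1 \cdot (-1) \;?
\]
so I would instead argue: $\norm{\tilde{p}_k - p_k}_1 \geq \norm{\tilde{p}_k/a - \tilde{p}_k}_1 - \norm{\tilde{p}_k/a - p_k}_1 = (a-1) - \norm{\tilde{p}_k/a - p_k}_1$, and separately $\norm{\tilde{p}_k/a - p_k}_1 \leq 1 + \norm{\tilde{p}_k/a}_1 = 1 + 1 = 2 \leq a$. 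Hence $\norm{\tilde{p}_k/a - p_k}_1 \leq a - \norm{\tilde{p}_k/a - p_k}_1 \leq (a - 1) - \bigl((a-1) - \norm{\tilde{p}_k - p_k}_1\bigr) = \norm{\tilde{p}_k - p_k}_1$ once the algebra is arranged correctly.

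The main obstacle is organizing the inequalities so the two slack terms cancel exactly rather than merely bounding loosely; the numerically clean route is to set $a = \norm{\tilde{p}_k}_1 \geq 2$, use $\norm{\tilde{p}_k - p_k}_1 \geq a - 1$ (reverse triangle inequality, $\norm{p_k}_1 = 1$) for a lower bound on the right-hand side, and $\norm*{\tilde{p}_k/a - p_k}_1 \leq \norm{\tilde{p}_k/a}_1 + \norm{p_k}_1 = 2 \leq a \leq a - 1 + (a-1) \leq \norm{\tilde{p}_k - p_k}_1 + (a - 1)$ — wait, this still needs care. I would ultimately prove it by the single chain
\[
    \norm*{\frac{\tilde{p}_k}{a} - p_k}_1 \leq \norm*{\frac{\tilde{p}_k}{a}}_1 + \norm{p_k}_1 = 2 \leq a - 1 + 1 = (a - 1) + \norm{p_k}_1 \leq \norm{\tilde{p}_k - p_k}_1 + \norm{\tilde{p}_k}_1 - \norm{\tilde{p}_k}_1 + 1,
\]
tidied up so the final right-hand side is exactly $\norm{\tilde{p}_k - p_k}_1$: since $a \geq 2$ we have $2 \leq a \leq \norm{\tilde{p}_k - p_k}_1 + 1$ is false in general, so the genuinely correct and minimal argument is $\norm{\tilde{p}_k/a - p_k}_1 \leq 2$ and $\norm{\tilde{p}_k - p_k}_1 \geq \norm{\tilde{p}_k}_1 - 1 \geq 1$, which alone does not suffice — so the real content, and the step I expect to be delicate, is showing $\norm{\tilde{p}_k - p_k}_1 \geq 2$ is \emph{not} needed; rather one shows $\norm{\tilde{p}_k/a - p_k}_1 \leq 2 - (\text{something}) \leq \norm{\tilde{p}_k - p_k}_1$ by splitting $p_k$ coordinatewise against the common direction $\tilde{p}_k/a$. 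I would finish by writing $\tilde{p}_k/a - p_k = (\tilde{p}_k - p_k) - (1 - 1/a)\tilde{p}_k$ and applying the triangle inequality in the form that shrinks, not grows, the norm, which works precisely because $(1-1/a) \in [1/2, 1)$ when $a \geq 2$.
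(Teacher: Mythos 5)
There is a genuine gap: the proposal never assembles a complete argument — it is a sequence of attempts that you yourself flag as inconclusive, and the steps that would close the gap are either false or unjustified. Concretely, with your $a = \norm{\tilde{p}_k}_1$: the claim $\norm{\tilde{p}_k/a - p_k}_1 \leq a - \norm{\tilde{p}_k/a - p_k}_1$ amounts to $\norm{\tilde{p}_k/a - p_k}_1 \leq a/2$, which fails in general (take $\tilde{p}_k = -2p_k$, so $a = 2$ and the left side is $2 > 1$); the next link, $a - \norm{\tilde{p}_k/a - p_k}_1 \leq \norm{\tilde{p}_k - p_k}_1$, would need $a \leq \norm{\tilde{p}_k - p_k}_1 + \norm{\tilde{p}_k/a - p_k}_1$, whereas the triangle inequality only gives this with $a - 1$ on the left. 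The proposed finish — writing $\tilde{p}_k/a - p_k = (\tilde{p}_k - p_k) - (1 - 1/a)\tilde{p}_k$ and invoking ``the triangle inequality in the form that shrinks, not grows, the norm'' — is not a valid step: applied to that decomposition the triangle inequality returns exactly the bound $\norm{\tilde{p}_k/a - p_k}_1 \leq \norm{\tilde{p}_k - p_k}_1 + (a-1)$ that you already discarded as too weak, and no form of it lets you discard the term $(1 - 1/a)\tilde{p}_k$ for free.

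What is missing is a single reorganizing move, and you had every ingredient on the page. The paper clears the denominator first and only then estimates: since $a \geq 2$, one has $2\norm{\tilde{p}_k/a - p_k}_1 \leq a\norm{\tilde{p}_k/a - p_k}_1 = \norm{\tilde{p}_k - a\,p_k}_1 \leq \norm{\tilde{p}_k - p_k}_1 + \abs{1 - a}\cdot\norm{p_k}_1 = \norm{\tilde{p}_k - p_k}_1 + \abs{\norm{p_k}_1 - \norm{\tilde{p}_k}_1} \leq 2\norm{\tilde{p}_k - p_k}_1$, the last step being precisely the reverse triangle inequality $\norm{\tilde{p}_k - p_k}_1 \geq a - 1$ that you wrote down; dividing by $2$ gives the claim. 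In other words, the slack $(a-1)$ is absorbed not by a ``shrinking'' triangle inequality but by the factor $a \geq 2$ on the left-hand side combined with the reverse triangle inequality on the right — assemble your observations in that order and the proof closes.
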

\begin{proof}
    Assume that $\norm{\tilde{p}_k}_1 \geq 2$.
    The result follows from
    \begin{align*}
        2 \norm*{\frac{\tilde{p}_k}{\norm*{\tilde{p}_k}_1} - p_k}_1 &\leq \norm*{\tilde{p}_k}_1 \cdot \norm*{\frac{\tilde{p}_k}{\norm*{\tilde{p}_k}_1} - p_k}_1 = \norm*{\tilde{p}_k - \norm{\tilde{p}_k}_1 p_k}_1 = \norm*{\tilde{p}_k - p_k + p_k - \norm{\tilde{p}_k}_1 p_k}_1\\
        &\leq \norm*{\tilde{p}_k - p_k}_1 + \norm*{p_k - \norm{\tilde{p}_k}_1 p_k}_1 = \norm*{\tilde{p}_k - p_k}_1 + \underbrace{\norm*{p_k}_1}_{= 1} \cdot \abs*{1 - \norm{\tilde{p}_k}_1}\\
        &= \norm*{\tilde{p}_k - p_k}_1 + \abs*{1 - \norm{\tilde{p}_k}_1} = \norm*{\tilde{p}_k - p_k}_1 + \abs*{\norm{p_k}_1 - \norm{\tilde{p}_k}_1}\\
        &\txtleq{Inv. triangle inequ.} \norm*{\tilde{p}_k - p_k}_1 + \norm*{\tilde{p}_k - p_k}_1 = 2 \norm*{\tilde{p}_k - p_k}_1.\qedhere
    \end{align*}
\end{proof}
\begin{proposition}
    Consider an aggregation of a Markov chain and $k \in \N$.
    If for the approximated transient distribution $\tilde{p}_k$ we have $\norm{\tilde{p}_k}_1 \geq 1$ and for all $i \in \N$ with $1 \leq i \leq n$ we have $\tilde{p}_k(i) \leq 0$, then~(\ref{eq:normalization-helps}) holds true.
\end{proposition}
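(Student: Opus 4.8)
The plan is to exploit the sign hypothesis to strip away all absolute values and then evaluate both $1$-norms appearing in~(\ref{eq:normalization-helps}) in closed form. Write $c \coloneqq \norm{\tilde{p}_k}_1$, so $c \geq 1$ by assumption. Since every entry of $\tilde{p}_k$ is non-positive, so is every entry of $\tilde{p}_k / c$, while $p_k$, being a probability distribution, has non-negative entries summing to $1$. Hence every entry of $\tilde{p}_k - p_k$ and of $\frac{1}{c}\tilde{p}_k - p_k$ is non-positive, which lets us drop the $\abs{\,\cdot\,}$ in each $1$-norm and replace it by the negated entry. I would also record the identity $\sum_{i=1}^n \tilde{p}_k(i) = -\sum_{i=1}^n \abs{\tilde{p}_k(i)} = -c$, which is the only place the non-positivity of the entries is genuinely used.

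First I would compute the right-hand side of~(\ref{eq:normalization-helps}):
\[
    \norm{\tilde{p}_k - p_k}_1 = \sum_{i=1}^n \bigl( p_k(i) - \tilde{p}_k(i) \bigr) = 1 - (-c) = 1 + c.
\]
Then I would compute the left-hand side in exactly the same way, now using $\sum_i \frac{\tilde{p}_k(i)}{c} = \frac{-c}{c} = -1$:
\[
    \norm*{\frac{\tilde{p}_k}{\norm{\tilde{p}_k}_1} - p_k}_1 = \sum_{i=1}^n \left( p_k(i) - \frac{\tilde{p}_k(i)}{c} \right) = 1 - (-1) = 2.
\]
The claim is then immediate, since $2 \leq 1 + c = \norm{\tilde{p}_k - p_k}_1$ follows from $c \geq 1$.

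I do not expect a genuine obstacle here: once the hypothesis is recognized to force $\tilde{p}_k - p_k \leq 0$ componentwise, the whole argument reduces to a short sign computation. The closest thing to a pitfall is keeping the normalization factor $1/\norm{\tilde{p}_k}_1$ straight in the second display and correctly using the identity $\sum_i \tilde{p}_k(i) = -\norm{\tilde{p}_k}_1$; everything else is routine.
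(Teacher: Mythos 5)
Your proof is correct, but it takes a somewhat different route from the paper's. The paper argues termwise: since $\tilde{p}_k(i) \leq 0$ and $\norm{\tilde{p}_k}_1 \geq 1$, dividing by the norm moves each entry towards zero, so $\tilde{p}_k(i) \leq \frac{\tilde{p}_k(i)}{\norm{\tilde{p}_k}_1} \leq 0 \leq p_k(i)$, hence $\abs*{\frac{\tilde{p}_k(i)}{\norm{\tilde{p}_k}_1} - p_k(i)} \leq \abs*{\tilde{p}_k(i) - p_k(i)}$ for every $i$, and summing gives the claim; note this uses only $p_k(i) \geq 0$. You instead evaluate both norms in closed form, additionally invoking $\norm{p_k}_1 = 1$: under the sign hypothesis the left-hand side equals $2$ and the right-hand side equals $1 + \norm{\tilde{p}_k}_1$, and $\norm{\tilde{p}_k}_1 \geq 1$ finishes. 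That is legitimate in this setting ($p_k$ is a genuine transient distribution, and the neighbouring proposition in the paper also uses $\norm{p_k}_1 = 1$), and it buys a little more: the exact values show the inequality is strict unless $\norm{\tilde{p}_k}_1 = 1$, and they quantify the gap as $\norm{\tilde{p}_k}_1 - 1$. The paper's argument is marginally more economical in hypotheses, since it never needs the entries of $p_k$ to sum to one, and it yields entrywise domination rather than only a comparison of the totals.
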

\begin{proof}
    This can be shown by breaking down the term into sums concerning the individual components, giving us
    \begin{align*}
        \norm*{\frac{\tilde{p}_k}{\norm*{\tilde{p}_k}_1} - p_k}_1 = \sum_{i = 1}^n \abs*{\frac{\tilde{p}_k(i)}{\norm*{\tilde{p}_k}_1} - p_k(i)}.
    \end{align*}
    Now as $\tilde{p}_k(i)$ is always negative and $\norm{\tilde{p}_k}_1 \geq 1$, we know that $\tilde{p}_k(i) \leq \frac{\tilde{p}_k(i)}{\norm*{\tilde{p}_k}_1}$.
    Combining this with the fact that $p_k(i) \geq 0$, results in
    \[
        \sum_{i = 1}^n \abs*{\frac{\tilde{p}_k(i)}{\norm*{\tilde{p}_k}_1} - p_k(i)} \leq \sum_{i = 1}^n \abs*{\tilde{p}_k(i) - p_k(i)} = \norm*{\tilde{p}_k - p_k}_1.\qedhere
    \]
\end{proof}
Now, the following three conjectures remain unproven as pure postulations.
\begin{conjecture}\label{conj:unproven1}
Consider an aggregation of a Markov chain and $k \in \N$.
If for the approximated transient distribution $\tilde{p}_k$ there is some $i \in \N$ with $1 \leq i \leq n$ and $\tilde{p}_k(i) \leq -1$, then~(\ref{eq:normalization-helps}) holds true.
\end{conjecture}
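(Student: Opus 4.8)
The plan is to reuse the component-wise strategy from the immediately preceding proposition, but to separate the index $i$ where $\tilde p_k(i) \le -1$ from the rest. First I would observe that since $\tilde p_k(i) \le -1 < 0$, we certainly have $\norm{\tilde p_k}_1 \ge \abs{\tilde p_k(i)} \ge 1$, so the normalization factor $\norm{\tilde p_k}_1$ is at least one and dividing by it does not increase absolute values. Writing
\[
    \norm*{\frac{\tilde p_k}{\norm{\tilde p_k}_1} - p_k}_1 = \sum_{\ell = 1}^n \abs*{\frac{\tilde p_k(\ell)}{\norm{\tilde p_k}_1} - p_k(\ell)},
\]
the goal is to bound the $\ell$-th summand by $\abs{\tilde p_k(\ell) - p_k(\ell)}$ for every $\ell$, which would finish the proof exactly as before. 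For those indices $\ell$ with $\tilde p_k(\ell) \le 0$, the argument from the previous proposition applies verbatim: $\tilde p_k(\ell) \le \tilde p_k(\ell)/\norm{\tilde p_k}_1 \le 0 \le p_k(\ell)$, so the distance to $p_k(\ell) \ge 0$ only shrinks under normalization.

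The difficulty is the indices $\ell$ where $\tilde p_k(\ell) > 0$: there, scaling down by $\norm{\tilde p_k}_1$ moves $\tilde p_k(\ell)$ toward $0$ and hence possibly \emph{away} from $p_k(\ell)$, so a purely index-by-index comparison fails. The idea to handle this is a global accounting argument. Let $N$ be the sum of the negative entries of $\tilde p_k$ (so $N \le -1$, using the index $i$) and $Q$ the sum of the positive entries, so that $\norm{\tilde p_k}_1 = Q - N = Q + \abs N$. The total "inward shift" experienced by the positive entries under normalization is $Q(1 - 1/\norm{\tilde p_k}_1) = Q\cdot\frac{Q+\abs N - 1}{\,Q+\abs N\,}$... actually $Q - Q/\norm{\tilde p_k}_1 = Q\,\frac{\norm{\tilde p_k}_1 - 1}{\norm{\tilde p_k}_1}$, whereas the total "extra room" gained on the negative side is $\abs N - \abs N/\norm{\tilde p_k}_1 = \abs N\,\frac{\norm{\tilde p_k}_1-1}{\norm{\tilde p_k}_1}$. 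Since $\abs N \ge 1$ while each positive entry satisfies $0 < \tilde p_k(\ell) < \norm{\tilde p_k}_1$, I expect one can charge each unit of harmful movement on the positive side against an equal amount of beneficial movement on the negative side, provided $\abs N \ge 1$; the condition $\tilde p_k(i) \le -1$ is exactly what guarantees enough "budget."

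Concretely, the key steps in order would be: (1) reduce to showing $\sum_\ell \abs{\tilde p_k(\ell)/\norm{\tilde p_k}_1 - p_k(\ell)} \le \sum_\ell \abs{\tilde p_k(\ell) - p_k(\ell)}$; (2) split the index set into $L^- = \set{\ell : \tilde p_k(\ell) \le 0}$ and $L^+ = \set{\ell : \tilde p_k(\ell) > 0}$; (3) on $L^-$, dispatch each term by the monotonicity estimate above with no loss; (4) on $L^+$, bound $\abs{\tilde p_k(\ell)/\norm{\tilde p_k}_1 - p_k(\ell)} \le \abs{\tilde p_k(\ell) - p_k(\ell)} + \tilde p_k(\ell)(1 - 1/\norm{\tilde p_k}_1)$ by the triangle inequality, so the total overshoot on $L^+$ is at most $Q(1 - 1/\norm{\tilde p_k}_1)$; (5) on $L^-$, show the slack is at least this much — here one uses $\abs{\tilde p_k(\ell) - p_k(\ell)} - \abs{\tilde p_k(\ell)/\norm{\tilde p_k}_1 - p_k(\ell)} = \abs{\tilde p_k(\ell)}(1 - 1/\norm{\tilde p_k}_1) = \tilde p_k(\ell)(1/\norm{\tilde p_k}_1 - 1) \ge 0$, summing to $\abs N(1 - 1/\norm{\tilde p_k}_1) \ge Q(1 - 1/\norm{\tilde p_k}_1)$? — and this last inequality is false in general since $Q$ may exceed $\abs N$. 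That is the genuine obstacle: the naive global bound does not close unless $Q \le \abs N$, so the real proof must exploit $\norm{p_k}_1 = 1$ more carefully, presumably by noting that on $L^+$ the overshoot term $\tilde p_k(\ell)(1 - 1/\norm{\tilde p_k}_1)$ is partially absorbed whenever $p_k(\ell)$ is small, and using $\sum_\ell p_k(\ell) = 1$ together with $\abs N \ge 1$ to balance the ledger. Pinning down that balancing — the interplay between the mass-one constraint on $p_k$ and the single guaranteed unit of negativity — is the step I expect to be hardest, which is presumably why the authors left it as Conjecture~\ref{conj:unproven1}.
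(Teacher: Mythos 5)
There is no proof in the paper to compare against: the statement you were given is exactly Conjecture~\ref{conj:unproven1}, which the authors explicitly leave unproven (it is supported only by numerical checks in two and three dimensions), so any complete argument you gave would automatically be ``a different route.''

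However, your proposal is not a complete argument, and you say so yourself: the decisive step is your item (5), where the total positive-side overshoot $Q\left(1 - \frac{1}{\norm{\tilde p_k}_1}\right)$ must be charged against the negative-side slack $\abs{N}\left(1 - \frac{1}{\norm{\tilde p_k}_1}\right)$, and this only closes when $\abs{N} \geq Q$. The hypothesis $\tilde p_k(i) \leq -1$ gives $\abs{N} \geq 1$, not $\abs{N} \geq Q$ (take $\tilde p_k = (-1, 10)\tran$, so $Q = 10 > 1 = \abs{N}$; the conjectured inequality still holds there, but your ledger does not balance). The per-index bound in step (4) is also strictly lossier than needed: for indices $\ell \in L^+$ with $p_k(\ell) \leq \tilde p_k(\ell)/\norm{\tilde p_k}_1$, normalization actually decreases the summand, and any proof must exploit this together with $\norm{p_k}_1 = 1$; you gesture at this (``the mass-one constraint \dots to balance the ledger'') but supply no argument, and it is precisely this interplay between the single guaranteed unit of negativity and the unit of probability mass in $p_k$ that constitutes the open content of the conjecture. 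So the proposal is an honest reduction to, and a clear identification of, the same unresolved difficulty — a useful framing, but a genuine gap remains and the statement stays unproven.
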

\begin{conjecture}\label{conj:unproven2}
Consider an aggregation of a Markov chain and $k \in \N$.
If for the approximated transient distribution $\tilde{p}_k$ there is some $i \in \N$ with $1 \leq i \leq n$ and $\tilde{p}_k(i) \geq \frac{9}{8}$, then~(\ref{eq:normalization-helps}) holds true.
\end{conjecture}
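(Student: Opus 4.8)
The plan is to write $q := \tilde{p}_k$, $p := p_k$ and $s := \norm{q}_1$, noting that the $i$-th coordinate already contributes $q(i)$ to $\norm{q}_1$, so the hypothesis forces $s \ge q(i) \ge \tfrac{9}{8} > 1$. I would then split the argument according to the size of $\norm{e_k}_1 = \norm{q-p}_1$ (Definition~\ref{def:error-k-vec}).

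If $\norm{q-p}_1 \ge 1$, the triangle inequality is enough: since $s>1$ and $\norm{p}_1 = 1$,
\[
    \norm*{\frac{q}{s}-p}_1 \le \norm*{\frac{q}{s}-\frac{p}{s}}_1 + \norm*{\frac{p}{s}-p}_1 = \frac{1}{s}\norm{q-p}_1 + \Bigl(1-\frac{1}{s}\Bigr),
\]
and the right-hand side is $\le \norm{q-p}_1$ exactly when $\norm{q-p}_1 \ge 1$. (This is really the general fact that normalizing helps whenever $\norm{\tilde p_k}_1 > 1$ and $\norm{e_k}_1 \ge 1$, and it would be clean to isolate it as a separate lemma.)

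If instead $\norm{q-p}_1 < 1$, the reverse triangle inequality gives $\abs{s-1} = \bigl|\norm{q}_1-\norm{p}_1\bigr| \le \norm{q-p}_1 < 1$, hence $\tfrac{9}{8} \le s < 2$. Here I would argue coordinatewise on $g_j := \abs{q(j)-p(j)} - \abs{q(j)/s-p(j)}$, whose sum over $j$ equals $\norm{q-p}_1 - \norm{q/s-p}_1$, and show this sum is $\ge 0$. For every $j$ one has $g_j \ge -\abs{q(j)}\bigl(1-\tfrac{1}{s}\bigr)$, and in fact $g_j = +\abs{q(j)}\bigl(1-\tfrac{1}{s}\bigr) \ge 0$ whenever $q(j) < 0$ or $p(j) \le \abs{q(j)}/s$; a coordinate can be ``bad'' ($g_j < 0$) only if $q(j) \ge 0$ and $q(j) < s\,p(j)$, so $\sum_{\text{bad }j \ne i} q(j) < s\sum_{j\ne i}p(j) = s(1-p(i))$. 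Since $q(i) \ge \tfrac{9}{8} > 1 \ge p(i)$, coordinate $i$ is never bad, and I would split on whether $p(i) \le q(i)/s$, giving $g_i = q(i)\bigl(1-\tfrac{1}{s}\bigr)$, or $q(i)/s < p(i)$, giving $g_i = q(i)\tfrac{s+1}{s} - 2p(i)$. Summing the coordinate estimates, with $\sum_{j\ne i}\abs{q(j)} = s - q(i)$ and the bound on the bad coordinates, the total is $\ge (2q(i)-s)\bigl(1-\tfrac{1}{s}\bigr) > 0$ in the first sub-case (as $2q(i) \ge \tfrac{9}{4} > s$), and $\ge \frac{9}{4s} + s - 3$ in the second (after inserting $q(i) \ge \tfrac{9}{8}$ and $p(i) \le 1$).

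The main obstacle is precisely this second sub-case, where $\tilde p_k(i)$ still undershoots $p_k(i)$ after renormalization; it is here that the constant $\tfrac{9}{8}$ is used in an essential way, since $s \mapsto \frac{9}{4s} + s - 3$ attains its minimum value $0$ at $s = \tfrac{3}{2} = \sqrt{2\cdot\tfrac{9}{8}}$ and is positive elsewhere on $[\tfrac{9}{8},2)$, so no smaller constant would close the estimate. I expect the only genuinely delicate bookkeeping to be (i) extracting that $p_k(i) > q(i)/s$ makes $p_k(i)$ large, so the remaining coordinates carry little target mass and their combined deficit is controlled by $s(1-p_k(i))$, and (ii) keeping the coordinates with $q(j) < 0$ and the strict-versus-nonstrict inequalities straight; everything else is the triangle inequality plus the one-variable estimate above.
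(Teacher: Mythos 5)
The paper does not prove this statement at all: it is explicitly left as an unproven conjecture, supported only by numerical checks in two and three dimensions, so your proposal is new work rather than a variant of an existing argument — and, having checked it, I believe it is essentially correct and would upgrade the conjecture to a proposition. Your first case is fine (and in fact subsumes the paper's proven proposition for $\norm{\tilde{p}_k}_1 \geq 2$, since then $\norm{e_k}_1 \geq \norm{\tilde{p}_k}_1 - 1 \geq 1$). In the case $\norm{e_k}_1 < 1$, hence $\tfrac{9}{8} \leq s < 2$, the coordinatewise bookkeeping closes exactly as you report, provided one keeps the positive contribution $(1-\tfrac{1}{s})\bigl(s - \tilde{p}_k(i) - Q_B\bigr)$ of the good coordinates and only then applies the bad-mass bound $Q_B \leq s\bigl(1 - p_k(i)\bigr)$, where $Q_B$ is the total $\tilde p_k$-mass of the bad coordinates $j \neq i$; summing gives a total of at least $\tfrac{2\tilde{p}_k(i)}{s} - (s-1) + 2p_k(i)(s-2)$, which is minimized at $p_k(i) = 1$ because $s < 2$, and then equals $\tfrac{2\tilde{p}_k(i)}{s} + s - 3 \geq \tfrac{9}{4s} + s - 3 \geq 0$ by AM--GM, with equality only at $s = \tfrac{3}{2}$ — precisely your stated expressions in both sub-cases, so your bookkeeping is right. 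One wording slip should be fixed in a write-up: coordinate $i$ \emph{can} be bad in your sense $g_i < 0$ (at the tight configuration $\tilde{p}_k = (\tfrac{9}{8}, \tfrac{3}{8})$, $p_k = (1,0)$ one has $s = \tfrac{3}{2}$ and $g_i = -\tfrac{1}{8}$, compensated exactly by the good coordinate); this does no harm, since you compute $g_i$ exactly in both sub-cases and only need to exclude $i$ from the bad-mass bound, but the sentence as stated is false. Your optimality remark is also correct: for any constant $1 < c < \tfrac{9}{8}$, taking $\tilde{p}_k = (c, \sqrt{2c} - c)$ and $p_k = (1, 0)$ makes normalization strictly worse by $3 - 2\sqrt{2c} > 0$, so $\tfrac{9}{8}$ cannot be lowered.
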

\begin{conjecture}\label{conj:unproven3}
Consider an aggregation of a Markov chain and $k \in \N$.
If for the approximated transient distribution $\tilde{p}_k$ we have pairwise different indices $i_1, \dots i_{n - 1}$ with $\sum_{j = 1}^{n - 1} \tilde{p}_k(i_j) \leq -1$, then~(\ref{eq:normalization-helps}) holds true.
\end{conjecture}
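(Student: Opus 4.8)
The plan is to reduce the statement to two scalar facts about $\tilde{p}_k$, namely
\[
    \norm{\tilde{p}_k}_1 \geq 1 \qquad\text{and}\qquad \norm{\tilde{p}_k - p_k}_1 \geq 1,
\]
and then finish with a short convexity-type estimate that uses nothing else. Both facts come out of the hypothesis by looking only at the $n-1$ controlled coordinates $i_1,\dots,i_{n-1}$ and applying the triangle inequality in the form $\sum_j\abs{a_j}\geq\abs*{\sum_j a_j}$. Since $i_1,\dots,i_{n-1}$ are pairwise distinct in $\set{1,\dots,n}$, exactly one index $\ell$ is left out; writing $S\coloneqq\sum_{j=1}^{n-1}\tilde{p}_k(i_j)\leq -1$ (so in particular $n\geq 2$), I would first get $\norm{\tilde{p}_k}_1\geq\sum_{j=1}^{n-1}\abs{\tilde{p}_k(i_j)}\geq\abs{S}\geq 1$, hence $\tilde{p}_k\neq\mathbf{0}$ and, with $s\coloneqq\norm{\tilde{p}_k}_1$, also $0<\tfrac1s\leq 1$.

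Next I would prove the second bound. Because $p_k$ is a probability distribution, $\sum_{j=1}^{n-1}p_k(i_j)=1-p_k(\ell)\in[0,1]$, so $S-\sum_{j=1}^{n-1}p_k(i_j)\leq S\leq -1$; restricting the $1$-norm to the coordinates $i_1,\dots,i_{n-1}$ and using the triangle inequality once more gives
\[
    \norm{\tilde{p}_k - p_k}_1 \geq \abs*{S-\sum_{j=1}^{n-1}p_k(i_j)}\geq \abs{S}\geq 1 .
\]
With both bounds in hand I would write the normalized error through the splitting $\tfrac{\tilde{p}_k}{s}-p_k=\tfrac1s(\tilde{p}_k-p_k)+(\tfrac1s-1)p_k$, apply the triangle inequality together with $\abs{\tfrac1s-1}=1-\tfrac1s$ (valid since $s\geq 1$) and $\norm{p_k}_1=1$ to obtain $\norm*{\tfrac{\tilde{p}_k}{s}-p_k}_1\leq\tfrac1s\norm{\tilde{p}_k-p_k}_1+(1-\tfrac1s)$, and then, since $1-\tfrac1s\geq 0$ and $\norm{\tilde{p}_k-p_k}_1\geq 1$, bound the last summand by $(1-\tfrac1s)\norm{\tilde{p}_k-p_k}_1$; the two terms add up to $\norm{\tilde{p}_k-p_k}_1$, which — as $s=\norm{\tilde{p}_k}_1$ — is exactly~(\ref{eq:normalization-helps}).

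The closing estimate is really just convexity of $t\mapsto\norm{t\tilde{p}_k-p_k}_1$ on $[0,1]$ evaluated at $t=\tfrac1s$, using $\norm{0\cdot\tilde{p}_k-p_k}_1=1$; so the only place the specific hypothesis enters is the bound $\norm{\tilde{p}_k-p_k}_1\geq 1$, and I expect that to be the main (if modest) obstacle — the temptation is to estimate the whole $1$-norm at once, whereas one must isolate the $n-1$ coordinates whose $p_k$-mass is at most $1$ and whose $\tilde{p}_k$-mass is at most $-1$. I would also note that the same ``$\norm{\tilde{p}_k}_1\geq 1$ and $\norm{\tilde{p}_k-p_k}_1\geq 1$'' template covers the two already-proven propositions and Conjecture~\ref{conj:unproven1} as well, so it is a natural unified argument for all of these cases.
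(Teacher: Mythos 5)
Your argument is correct, but there is no proof in the paper to compare it against: Conjecture~\ref{conj:unproven3} is deliberately left unproven there, supported only by numerical checks in two and three dimensions, so your proposal in fact settles it (under the paper's standing assumption, also used in the two proven propositions, that $p_k$ is a probability distribution). The two reductions hold up: with $S \coloneqq \sum_{j=1}^{n-1}\tilde{p}_k(i_j) \leq -1$, restricting the $1$-norm to the $n-1$ distinct coordinates gives $\norm{\tilde{p}_k}_1 \geq \abs{S} \geq 1$, and since $\sum_{j=1}^{n-1}p_k(i_j) = 1 - p_k(\ell) \in [0,1]$ is nonnegative, the same restriction gives $\norm{\tilde{p}_k - p_k}_1 \geq \abs*{S - \sum_{j=1}^{n-1}p_k(i_j)} \geq \abs{S} \geq 1$. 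The closing step, via $\frac{\tilde{p}_k}{s} - p_k = \frac{1}{s}(\tilde{p}_k - p_k) + \bigl(\frac{1}{s}-1\bigr)p_k$ with $s = \norm{\tilde{p}_k}_1 \geq 1$, $\norm{p_k}_1 = 1$ and $\norm{\tilde{p}_k - p_k}_1 \geq 1$, is exactly the convexity estimate you describe and yields~(\ref{eq:normalization-helps}). Your observation that the template \enquote{$\norm{\tilde{p}_k}_1 \geq 1$ and $\norm{\tilde{p}_k - p_k}_1 \geq 1$ imply~(\ref{eq:normalization-helps})} also recovers the two proven propositions and Conjecture~\ref{conj:unproven1} is accurate (reverse triangle inequality in the first case, sign considerations in the others); note, however, that it does not reach Conjecture~\ref{conj:unproven2}, where the hypothesis $\tilde{p}_k(i) \geq \frac{9}{8}$ only yields $\norm{\tilde{p}_k - p_k}_1 \geq \frac{1}{8}$, so that case still needs a genuinely different argument.
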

These last three conjectures originate from their numerical \enquote{confirmation} in two and three dimensions.
Further, note that even when assuming all propositions to be true, these bounds are still not tight.
For example, with $n = 3$, consider $\tilde{p}_k = \left( \frac{1}{3}, \frac{1}{3}, \frac{1}{3} \right)\tran$.
This vector is covered by none of the above propositions or conjectures, yet~({\ref{eq:normalization-helps}}) still holds with equality.

\subsection{Floating-point errors}\label{subsec:floating-point-errors}
As we have established in Chapter~\ref{ch:theoretical-preliminaries}, the Arnoldi iteration is equivalent to the Gram-Schmidt procedure, where we save some intermediary results.
Thus, we can understand the floating point errors arising in the Arnoldi iteration by studying those in the Gram-Schmidt procedure, which have already been studied widely.
This enables us to understand where different types of floating-point errors might arise, and how we can combat them.
As such, we will compare \ac{CGS} and \ac{MGS}, and offer further variants of both with greater numerical stability.
For this, we will refer to~\cite{björck2013100years}.
More precisely, we will use pages 500 to 504 to explain the numerical loss of orthogonality with \ac{CGS} juxtaposed against \ac{MGS}.
Furthermore, on pages 506 to 508, we will see that we can improve both \ac{CGS} and \ac{MGS} numerically by repeating the orthogonalization of the vectors.

We will just consider the case of applying the Gram-Schmidt procedure to a basis $v_1, \dots, v_n \in \R^m$.
We can assume that the normalization of vectors comes at a negligible impact on numerical stability.
As such, we have that $q_1$ is the same in floating-point arithmetic in \ac{CGS} and \ac{MGS}.
Even further, $q_2$ is too.
This is because the exact same arithmetic operations are executed in the same order in \ac{CGS} and \ac{MGS} in the case of $k = 2$.
Now, we have that $\inp{q_1, q_2} \eqqcolon \varepsilon > 0$ as in most cases, floating point arithmetic will cause $q_1$ and $q_2$ to be close to orthonormal, but not exactly so.
We denote this small error made in the first two steps by $\varepsilon$.
From this point onwards, the error made by \ac{CGS} and \ac{MGS} will differ, as this $\varepsilon$ will propagate differently.
Additionally, we assume that no further errors are made beyond this first error $\varepsilon$.
Of course, in reality this is not the case, but it can be shown that all further errors propagate similarly to the initial $\varepsilon$.

As a formal analysis of the numerical error would far exceed the scope of this thesis, we will just look at the intuitive reasoning behind the numerical superiority of \ac{MGS} over \ac{CGS}.
We firstly process a basis $v_1, \dots, v_n$ of $\R^n$ with \ac{CGS}.
There, we can use the closed-form formula from Proposition~\ref{prop:closed-form-cgs} to see that for $i \neq j$, we have that
\begin{equation}
    \begin{aligned}
        \inp*{q_i, q_j} &= \frac{\inp*{q_i, v_j - \sum_{\ell = 1}^{j - 1} \inp{q_\ell, v_j} q_\ell}}{\norm{v_j - \sum_{\ell = 1}^{j - 1} \inp{q_\ell, v_j} q_\ell}_2}\\
        &= \frac{\inp*{q_i, v_j} - \sum_{\ell = 1}^{j - 1} \inp{q_\ell, v_j}\inp*{q_i, q_\ell}}{\norm{v_j - \sum_{\ell = 1}^{j - 1} \inp{q_\ell, v_j}q_\ell}_2}\\
        &= \frac{-\sum_{\ell = 1, \ell \neq i}^{j - 1} \inp{q_\ell, v_j}\inp*{q_i, q_\ell}}{\norm{v_j - \sum_{\ell = 1}^{j - 1} \inp{q_\ell, v_j}q_\ell}_2}.
    \end{aligned}\label{eq:cgserr}
\end{equation}
We can then proceed by induction on $j \geq 2$ to show that $\inp*{q_1, q_j}$ will never be zero.
As such, by Equation~\eqref{eq:cgserr} it is clear that $\inp*{q_1, q_j}$ will always contain the errors made by $\inp*{q_1, q_2}, \dots, \inp*{q_1, q_{j - 1}}$ as long as these individual errors don't cancel and $\inp{q_\ell, v_j} \neq 0$ for some $j$.
In general, these problems will persist.
But cases where they do are quite atypical and as such ignored.

Moving on, we process the same basis as above, but with \ac{MGS}.
We will further use the notation of $v_j^{(i)}$ denoting the modified $v_j$ after $i$ steps in \ac{MGS}, as introduced in the proof of Proposition~\ref{prop:cgs-equiv-mgs}.
Firstly, we will see by induction on $j$ that here $\inp{q_i, q_j} = 0$, while $i \neq j$, $i < j$ and $i \neq 1$.
Starting with the base case $j = i + 1$, we get
\begin{align*}
    \inp{q_i, q_{i + 1}} &= \frac{\inp{q_i, v_{i + 1}^{(i)}}}{\norm{v_{i + 1}^{(i)}}_2} = \frac{\inp{q_i, v_{i + 1}^{(i - 1)} - \inp{q_i, v_{i + 1}^{(i - 1)}}q_i}}{\norm{v_{i + 1}^{(i)}}_2}\\
    &= \frac{\inp{q_i, v_{i + 1}^{(i - 1)}} - \inp{q_i, v_{i + 1}^{(i - 1)}}\inp{q_i, q_i}}{\norm{v_{i + 1}^{(i)}}_2} = \frac{0}{\norm{v_{i + 1}^{(i)}}_2} = 0,
\end{align*}
finishing this base case.
We then proceed to the inductive step of the first statement with
\begin{align*}
    \inp{q_i, q_{j + 1}} &= \frac{\inp{q_i, v_{j + 1}^{(j)}}}{\norm{v_{j + 1}^{(j)}}_2} = \frac{\inp{q_i, v_{j + 1}^{(j - 1)} - \inp{q_j, v_{j + 1}^{(j - 1)}}q_j}}{\norm{v_{j + 1}^{(j)}}_2}\\
    &= \frac{\inp{q_i, v_{j + 1}^{(j - 1)}} - \inp{q_j, v_{j + 1}^{(j - 1)}}\inp{q_i, q_j}}{\norm{v_{j + 1}^{(j)}}_2} \txteq{\ac{IH}} \frac{\inp{q_i, v_{j + 1}^{(j - 1)}}}{\norm{v_{j + 1}^{(j)}}_2}.
\end{align*}
Here, we can continue to apply the recursive formula $v_j^{(i)} = v_j^{(i - 1)} - \inp{q_i, v_j^{(i - 1)}}q_i$ to the numerator, until this entire term becomes zero too, as it will eventually be of the form
\[
    \frac{\inp{q_i, v_j^{(1)}} - \inp{q_i, v_j^{(1)}}}{\norm{v_{j + 1}^{(j)}}_2} = 0,
\]
because $\inp{q_i, q_i} = 1$, finishing the inductive step.
This technique no longer works for $\inp{q_1, q_j}$, where we will see for $j \geq 2$ that $\inp{q_1, q_j} \neq 0$.
Again, proceeding by induction on $j$, the base case $j = 2$ is clear.
By using the recursive formula above, we get
\begin{align*}
    \inp{q_1, q_{j + 1}} &= \frac{\inp{q_1, v_{j + 1}^{(j)}}}{\norm{v_{j + 1}^{(j)}}_2} = \frac{\inp{q_1, v_{j + 1}^{(j - 1)} - \inp{q_j, v_{j + 1}^{(j - 1)}}q_j}}{\norm{v_{j + 1}^{(j)}}_2}\\
    &= \frac{\inp{q_1, v_{j + 1}^{(j - 1)}} - \inp{q_j, v_{j + 1}^{(j - 1)}}\inp{q_1, q_j}}{\norm{v_{j + 1}^{(j)}}_2} \txtneq{\ac{IH}} 0
\end{align*}
in the inductive step, as long as $\inp{q_j, v_{j + 1}^{(j - 1)}} \neq 0$.
Again, this cannot be assumed in general but in most cases it will hold.
For a general numerical analysis of \ac{CGS} and \ac{MGS}, see~\cite{björck1994numericana}.
There, one can find references to other, more in-depth sources.

Now we have seen how the floating-point error made by $\inp{q_1, q_2}$ propagates in both \ac{CGS} and \ac{MGS}.
In \ac{MGS}, $\inp{q_i, q_j}$ is unaffected for $i \neq 1$, contrasting \ac{CGS} where this error appears in all terms of the form $\inp{q_i, q_j}$.
This gives an intuition as to why \ac{MGS} is numerically more stable than \ac{CGS}.

Now, lastly, we will consider a simple option on how to combat the error as described above.
This can be done through so-called \emph{reorthogonalization}.
We will only describe this method very superficially.
As such, the \ac{CGS2} is the same as Algorithm~\ref{alg:classical-gs} with lines $r_{j,k} \coloneqq \inp{q_j, q_k}$ and $q_k = q_k - r_{j,k} q_j$ added in the innermost loop after line 6.
Analogously, the \ac{MGS2} is just as Algorithm~\ref{alg:modified-gs} with lines $r_{j,k} \coloneqq \inp{q_k, v_j}$ and $v_j = v_j - r_{j,k} q_k$ added in the innermost loop after line 7.

Of course, this severely impacts the runtime negatively.
As such, there are different conditions which can be checked to see whether these additional lines should be executed.
These variants of \ac{CGS} and \ac{MGS} are called the \ac{CGSIR} and \ac{MGSIR}.
See~\cite[506--508]{björck2013100years} for a more detailed overview on reorthogonalization in the Gram-Schmidt procedure.
Usually, \ac{CGSIR} offers the best compromise between speed and accuracy, as the loss of orthogonality is much smaller than in \ac{CGS} while being comparable to the slower alternatives of \ac{CGS2}, \ac{MGSIR} or \ac{MGS2}.
We will later confirm this in our use-case.

\subsection{Determining convergence}\label{subsec:determining-convergence}
In the context of using the Arnoldi iteration as an aggregation method, determining convergence refers to determining at what aggregated state space size the aggregation reaches a sufficient level of exactness.

The first idea is, by following Algorithm~\ref{alg:arnoldi-iteration}, to check for which $j$, we would have $h_{j,j+1} \approx 0$.
This would allow us to detect when an exact aggregation is found.
Unfortunately, we will later see in experiments that this does not work due to numerical errors.

Thus, we look at the error bound
\[
    \norm{e_k}_1 \leq \norm{e_0}_1 + \sum_{j = 0}^{k - 1} \inp*{\abs*{\pi_j}, \abs*{\mat{\Pi} \mat{A} - \mat{A} \mat{P}} \cdot \mathbf{1}_n}
\]
from Proposition~\ref{prop:specific-error-bound}.
Now, we can check the quality of an aggregation by determining the above bound for some fixed $k$.
This is done in every successive aggregated state space size.
Once the error bound has fallen below a given size for some $k$, we can stop expanding the aggregated state space.
While this idea allows us to properly assess how \enquote{good} an aggregation is, it is only useful for large $k$, which takes too long to compute in practice.
Thus, under the assumption that $\pi_j$ converges towards an eigenvector or stationary distribution of $\mat{\Pi}$ for sufficiently large $j$, we look at the value of $\inp*{\abs*{\pi}, \abs*{\mat{\Pi} \mat{A} - \mat{A} \mat{P}} \cdot \mathbf{1}_n}$, where $\pi$ is an eigenvector with associated eigenvalue one.
We will now look at how such a $\pi$ may be determined.

Determining an eigenvector of the aggregated step matrix is already done in the Krylov-Schur variant of the Arnoldi iteration presented in~\cite{stewart2002krylovschur}.
This variation in itself is not directly suited to our application, and as such, we will not explore it further here.
Still, computing the so-called Schur decomposition of the aggregated step matrix to get its eigenvalues and -vectors is applicable to our situation too.
At first, we need to understand how this works:
\begin{definition}[QR decomposition]\label{def:qrdecomp}
Following~\cite[305]{beilina2017numlinalg}, let $\mat{M} \in \C^{n \times m}$ with $n \geq m$ and full column rank.
Then we call $\mat{Q} \in \C^{n \times m}$ with $\mat{Q} \mat{Q}\tranconj = \mat{I}_n$ and $\mat{R} \in \C^{m \times m}$ upper triangular, a \emph{QR decomposition} of $\mat{M}$, if, and only, if $\mat{M} = \mat{Q} \mat{R}$ holds.
\end{definition}
\begin{proposition}\label{prop:qrdecompworks}
For any given matrix $\mat{M} \in \C^{n \times m}$ with $n \geq m$ and full column rank, there exists a QR decomposition.
\end{proposition}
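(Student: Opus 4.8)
The plan is to read off $\mat{Q}$ and $\mat{R}$ directly from the classical Gram--Schmidt procedure of Algorithm~\ref{alg:classical-gs} applied to the columns of $\mat{M}$. Since $\mat{M}$ has full column rank, its columns $v_1, \dots, v_m \in \C^n$ are linearly independent, hence an admissible input. The procedure and its analysis in Propositions~\ref{prop:closed-form-cgs} and~\ref{prop:CGS-orthonorm-basis} are written over $\R^m$, but — as Definition~\ref{def:orthonormal} already permits an arbitrary inner product space — they carry over verbatim to $\C^n$ with the standard Hermitian inner product, provided one fixes the convention (implicit in Algorithm~\ref{alg:classical-gs}) that $\inp{\cdot,\cdot}$ is linear in its second argument; no step uses conjugate-symmetry in an essential way. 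This produces orthonormal vectors $q_1, \dots, q_m \in \C^n$ with $\spn\set{q_1, \dots, q_i} = \spn\set{v_1, \dots, v_i}$ for every $i \leq m$.

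Next I would assemble $\mat{Q} \in \C^{n \times m}$ from $q_1, \dots, q_m$ as its columns; orthonormality of the $q_i$ is precisely the orthogonality requirement on $\mat{Q}$ in Definition~\ref{def:qrdecomp}. For $\mat{R}$, observe that each $v_k$ lies in $\spn\set{q_1, \dots, q_k}$, so expanding in the orthonormal system yields $v_k = \sum_{i=1}^k \inp{q_i, v_k}\, q_i$, and these coefficients are exactly the scalars $r_{i,k}$ already computed in Algorithm~\ref{alg:classical-gs}. Setting $\mat{R}(i,k) \coloneqq \inp{q_i, v_k}$ for $i \leq k$ and $\mat{R}(i,k) \coloneqq 0$ for $i > k$ makes $\mat{R} \in \C^{m\times m}$ upper triangular, and the identities $v_k = \sum_{i=1}^k \mat{R}(i,k)\, q_i$ read column by column say exactly $\mat{M} = \mat{Q}\mat{R}$.

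Finally I would record that $\mat{R}(k,k) = \inp{q_k, v_k} = \norm{v_k - \sum_{i<k}\inp{q_i,v_k} q_i}_2 > 0$ by linear independence of the $v_i$, which is not needed for the bare existence statement but shows the resulting $\mat{R}$ is invertible. I do not anticipate a real obstacle: the whole argument is a repackaging of Gram--Schmidt, and the only point demanding a word of care is the transition from the real setting of Algorithm~\ref{alg:classical-gs} to a complex (Hermitian) inner product, which is routine.
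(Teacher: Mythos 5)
Your proposal is correct and follows essentially the same route as the paper: apply the Gram--Schmidt procedure (transferred to the complex Hermitian setting) to the columns of $\mat{M}$, take $\mat{Q}$ with columns $q_1,\dots,q_m$ and $\mat{R}$ upper triangular with entries $\inp{q_i, c_k}$, and verify $\mat{M} = \mat{Q}\mat{R}$ column by column via Propositions~\ref{prop:closed-form-cgs} and~\ref{prop:CGS-orthonorm-basis}. Your direct expansion of each column in the orthonormal system is just a slightly streamlined phrasing of the paper's computation, so no substantive difference remains.
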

\begin{proof}
    Apply the Gram-Schmidt procedure from Algorithm~\ref{alg:classical-gs} or~\ref{alg:modified-gs} to the columns of $\mat{M}$, starting at the first column $c_1$.
    Note that we only defined the Gram-Schmidt procedure for real inputs, nonetheless, all proofs are easily transferred by substituting the transpose with the conjugate transpose and using the complex inner product $\inp{x, y} = x\tran \overline{y}$.
    By Proposition~\ref{prop:CGS-orthonorm-basis} and Corollary~\ref{cor:MGS-orthonorm-basis} respectively, we get an orthonormal basis $q_1, \dots, q_m$ of the space $\colsp(\mat{M}) \eqqcolon \spn\set{c_1, \dots, c_m}$.
    Then define
    \[
        \mat{Q} \coloneqq
        \pmat{
            | & & |\\
            q_1 & \cdots & q_m \\
            | & & |
        }\ \text{and}\ \mat{R} \coloneqq
        \pmat{
            \inp*{q_1, c_1} & \inp*{q_1, c_2} & \dots & \inp*{q_1, c_m}\\
            0 & \inp*{q_2, c_2} & \dots & \inp*{q_2, c_m}\\
            \vdots & \vdots & \ddots & \vdots\\
            0 & 0 & \dots & \inp*{q_m, c_m}
        }.
    \]
    Thus, we can see that each column $\mat{Q} \mat{R}(i)$ of $\mat{Q} \mat{R}$ evaluates to
    \begin{align*}
        \mat{Q} \mat{R}(i)& =
        \pmat{
            | & & |\\
            q_1 & \cdots & q_m \\
            | & & |
        }
        \pmat{
            \inp*{q_1, c_1} & \inp*{q_1, c_2} & \dots & \inp*{q_1, c_m}\\
            0 & \inp*{q_2, c_2} & \dots & \inp*{q_2, c_m}\\
            \vdots & \vdots & \ddots & \vdots\\
            0 & 0 & \dots & \inp*{q_n, c_m}
        }\!\!(i)\\
        &= \sum_{j = 1}^i \inp*{q_j, c_i} q_j = \sum_{j = 1}^{i - 1} \inp*{q_j, c_i} q_j + \inp*{q_i, c_i} q_i.
    \end{align*}
    Then, we additionally see that
    \begin{align*}
        1 &\txteq{Prop.~\ref{prop:CGS-orthonorm-basis}} \inp*{q_i, q_i} \txteq{Prop.~\ref{prop:closed-form-cgs}} \inp*{\frac{c_i - \sum_{j = 1}^{i - 1} \inp{q_j, c_i}q_j}{\norm{c_i - \sum_{j = 1}^{i - 1} \inp{q_j, c_i}q_j}_2}, q_i}\\
        &= \frac{1}{\norm{c_i - \sum_{j = 1}^{i - 1} \inp{q_j, c_i}q_j}_2} \inp*{c_i - \sum_{j = 1}^{i - 1} \inp{q_j, c_i}q_j, q_i}\\
        &= \frac{1}{\norm{c_i - \sum_{j = 1}^{i - 1} \inp{q_j, c_i}q_j}_2} \left( \inp*{c_i, q_i} - \sum_{j = 1}^{i - 1} \inp*{c_i, q_j}\inp*{q_j, q_i} \right)\\
        &\txteq{Prop.~\ref{prop:CGS-orthonorm-basis}} \frac{1}{\norm{c_i - \sum_{j = 1}^{i - 1} \inp{q_j, c_i}q_j}_2} \inp*{c_i, q_i}\\
        \iff & \inp*{q_i, c_i} = \norm*{c_i - \sum_{j = 1}^{i - 1} \inp{q_j, c_i}q_j}_2.
    \end{align*}
    Combining these two relations, gives us
    \[
        \mat{Q} \mat{R}(i) = \sum_{j = 1}^i \inp*{q_j, c_i} q_j = \sum_{j = 1}^{i - 1} \inp*{q_j, c_i} q_j + \norm*{c_i - \sum_{j = 1}^{i - 1} \inp{q_j, c_i}q_j}_2 q_i.
    \]
    Note that this is just the relation from Proposition~\ref{prop:closed-form-cgs} rearranged to equal $c_i$, showing that $\mat{M} = \mat{Q} \mat{R}$.
    Further, we also get that $\mat{Q} \mat{Q}\tranconj = \mat{I}_n$, using their construction through the Gram-Schmidt procedure.
\end{proof}
We will later need this QR decomposition to compute the following as well:
\begin{definition}[Schur decomposition]\label{def:schurdecomp}
Following~\cite[127]{beilina2017numlinalg}, let $\mat{M} \in \C^{n \times n}$, $\mat{U} \in \C^{n \times n}$ and $\mat{T} \in \C^{n \times n}$.
Denote the eigenvalues of $\mat{M}$ as $\lambda_1, \dots, \lambda_n$.
Further, $\mat{U}\tranconj \mat{U} = \mat{I}_n$ and $\mat{T}$ is an upper triangle matrix with $\lambda_1, \dots, \lambda_n$ on its diagonal along with
\[
    \mat{T} = \mat{U} \mat{M} \mat{U}\tranconj,
\]
we call this collectively a \emph{Schur decomposition} of $\mat{M}$.
\end{definition}
Now we can go back to tackling our original problem of computing eigenvalues of the aggregated step matrix $\mat{H}_j$.
This is done by computing a Schur decomposition of $\mat{H}_j$.
In describing this computation, we follow~\cite[63--64]{arbenz2016lecturenotes}.
\begin{algorithm}[H]
    \caption{QR Algorithm}
    \begin{algorithmic}[1]
        \State Let $\mat{M} \in \C^{n \times n}$
        \State $\mat{M}_0 \coloneqq \mat{M}$
        \State $\mat{U}_0 \coloneqq \mat{I}$
        \For{$k = 1, 2 \dots$}
            \State $\mat{M}_{k - 1} \eqqcolon \mat{Q}_k \mat{R}_k$\Comment{Compute QR decomposition of $\mat{M}_{k - 1}$}
            \State $\mat{M}_k \coloneqq \mat{R}_k\mat{Q}_k$
            \State $\mat{U}_k \coloneqq \mat{U}_{k - 1}\mat{Q}_k$
        \EndFor
        \State $\mat{T} \coloneqq \mat{M}_\infty$
        \State $\mat{U} \coloneqq \mat{U}_\infty$
        \State \Return $\mat{T}, \mat{U}$
    \end{algorithmic}
    \label{alg:qralgorithm}
\end{algorithm}
\begin{theorem}
    Considering the QR algorithm from Algorithm~\ref{alg:qralgorithm} with input $\mat{M} \in \C^{n \times n}$, the matrices $\mat{T}$ and $\mat{U}$ form a Schur decomposition of $\mat{M}$, if $\mat{M}$ has $n$ distinct eigenvalues.
\end{theorem}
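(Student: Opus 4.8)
The plan is to run the classical convergence analysis of the unshifted QR iteration. First I would record the invariant that every iterate is unitarily similar to $\mat{M}$: with $\mat{U}_k = \mat{Q}_1\mat{Q}_2\cdots\mat{Q}_k$ (exactly the matrix built by line 7), the relation $\mat{M}_{k-1} = \mat{Q}_k\mat{R}_k$ gives $\mat{M}_k = \mat{R}_k\mat{Q}_k = \mat{Q}_k\tranconj\mat{M}_{k-1}\mat{Q}_k$, and a one-line induction yields $\mat{M}_k = \mat{U}_k\tranconj\mat{M}\mat{U}_k$ with $\mat{U}_k\tranconj\mat{U}_k = \mat{I}_n$. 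Hence whatever $\mat{M}_k$ and $\mat{U}_k$ converge to automatically satisfies the unitary-similarity requirement of Definition~\ref{def:schurdecomp} (up to the harmless convention that $\mat{M}_k = \mat{U}_k\tranconj\mat{M}\mat{U}_k$ reads $\mat{T} = \mat{U}\tranconj\mat{M}\mat{U}$, i.e.\ one takes $\mat{U}\tranconj$ in the role of the $\mat{U}$ of the definition), so it only remains to show that $\mat{M}_k$ tends to an upper-triangular matrix carrying $\lambda_1,\dots,\lambda_n$ on its diagonal.

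Second I would establish the ``QR of the powers'' identity $\mat{M}^k = \mat{U}_k\,(\mat{R}_k\mat{R}_{k-1}\cdots\mat{R}_1)$ by induction, using $\mat{M}\mat{U}_{k-1} = \mat{U}_{k-1}\mat{M}_{k-1} = \mat{U}_{k-1}\mat{Q}_k\mat{R}_k = \mat{U}_k\mat{R}_k$; this exhibits a QR factorization of $\mat{M}^k$ whose unitary factor is $\mat{U}_k$. The analytic heart is then to produce a second, asymptotically explicit QR factorization of $\mat{M}^k$. Diagonalize $\mat{M} = \mat{X}\mat{\Lambda}\mat{X}^{-1}$ with $\mat{\Lambda} = \mathrm{diag}(\lambda_1,\dots,\lambda_n)$ --- here I would actually invoke the slightly stronger hypotheses $\abs{\lambda_1} > \cdots > \abs{\lambda_n}$ and that $\mat{X}^{-1}$ has an LU factorization $\mat{X}^{-1} = \mat{L}\mat{U}$ with $\mat{L}$ unit lower triangular --- and factor $\mat{X} = \mat{Q}_X\mat{R}_X$. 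Inserting $\mat{\Lambda}^{-k}\mat{\Lambda}^k$ gives $\mat{M}^k = \mat{X}(\mat{\Lambda}^k\mat{L}\mat{\Lambda}^{-k})\mat{\Lambda}^k\mat{U}$, and since the $(i,j)$ entry of $\mat{\Lambda}^k\mat{L}\mat{\Lambda}^{-k}$ for $i>j$ is $(\lambda_i/\lambda_j)^k\mat{L}(i,j)\to 0$, we get $\mat{\Lambda}^k\mat{L}\mat{\Lambda}^{-k} = \mat{I}_n + \mat{E}_k$ with $\mat{E}_k\to\mat{0}$. Therefore $\mat{M}^k = \mat{Q}_X(\mat{I}_n + \mat{R}_X\mat{E}_k\mat{R}_X^{-1})\mat{R}_X\mat{\Lambda}^k\mat{U}$, and writing $\mat{I}_n + \mat{R}_X\mat{E}_k\mat{R}_X^{-1} = \tilde{\mat{Q}}_k\tilde{\mat{R}}_k$ with $\tilde{\mat{Q}}_k\to\mat{I}_n$, $\tilde{\mat{R}}_k\to\mat{I}_n$ (QR is continuous near the identity) yields $\mat{M}^k = (\mat{Q}_X\tilde{\mat{Q}}_k)(\tilde{\mat{R}}_k\mat{R}_X\mat{\Lambda}^k\mat{U})$, again a QR factorization of $\mat{M}^k$.

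Third, I would compare the two factorizations via the essential uniqueness of QR (unique up to right-multiplying the unitary factor by a diagonal unitary $\mat{D}_k$), obtaining $\mat{U}_k = \mat{Q}_X\tilde{\mat{Q}}_k\mat{D}_k$. Then $\mat{M}_k = \mat{U}_k\tranconj\mat{M}\mat{U}_k = \mat{D}_k\tranconj\tilde{\mat{Q}}_k\tranconj(\mat{Q}_X\tranconj\mat{M}\mat{Q}_X)\tilde{\mat{Q}}_k\mat{D}_k$, and $\mat{Q}_X\tranconj\mat{M}\mat{Q}_X = \mat{R}_X\mat{\Lambda}\mat{R}_X^{-1}$ is upper triangular with diagonal $(\lambda_1,\dots,\lambda_n)$. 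As $k\to\infty$, $\tilde{\mat{Q}}_k\to\mat{I}_n$, so $\mat{M}_k$ approaches the diagonal-unitary conjugate $\mat{D}_k\tranconj(\mat{R}_X\mat{\Lambda}\mat{R}_X^{-1})\mat{D}_k$, which is still upper triangular with the same diagonal; hence the diagonal of $\mat{M}_k$ converges to $(\lambda_1,\dots,\lambda_n)$ and its strictly-lower part to zero. So $\mat{T} = \mat{M}_\infty$ is upper triangular with the eigenvalues on the diagonal, $\mat{U}_k$ stays unitary (and, after fixing phases, converges to $\mat{Q}_X$), and together with the first step this gives a Schur decomposition of $\mat{M}$.

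The hard part will be the final passage to the limit: the diagonal unitary factors $\mat{D}_k$, i.e.\ the non-uniqueness of QR, need not converge, so neither $\mat{U}_k$ itself nor the strictly-upper-triangular part of $\mat{M}_k$ is guaranteed to converge --- only the data invariant under $\mat{D}_k$-conjugation, namely the diagonal of $\mat{M}_k$ and the moduli of its entries. A careful argument therefore either pins the phases down by suitably normalizing $\mat{Q}_X$, $\tilde{\mat{R}}_k$ and the $\mat{R}_k$, or states the conclusion as essential convergence, with ``$\mat{M}_\infty$'' read as $\lim_{k\to\infty}\mat{M}_k$ in the above sense. A secondary, purely bookkeeping gap is that the theorem is stated for ``$n$ distinct eigenvalues'', whereas the proof genuinely consumes distinct moduli and an eigenvector matrix with LU-factorizable inverse, so I would either strengthen the hypothesis or add a remark to that effect.
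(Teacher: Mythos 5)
The paper does not actually prove this theorem: its ``proof'' is a pointer to~\cite[Ch.~4 \& 8]{arbenz2016lecturenotes}, so your attempt is the only argument on the table, and it is indeed the standard convergence analysis of the unshifted QR iteration that the cited notes carry out (unitary-similarity invariant $\mat{M}_k=\mat{U}_k\tranconj\mat{M}\mat{U}_k$, the power identity $\mat{M}^k=\mat{U}_k(\mat{R}_k\cdots\mat{R}_1)$, comparison with the explicit QR factorization obtained from $\mat{M}^k=\mat{X}\mat{\Lambda}^k\mat{X}^{-1}$ and $\mat{\Lambda}^k\mat{L}\mat{\Lambda}^{-k}\to\mat{I}_n$, essential uniqueness of QR). Your two caveats are well placed, and the second one is more than bookkeeping: with only ``$n$ distinct eigenvalues'' the statement is not merely harder to prove by this route, it is false for the unshifted iteration as written. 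For instance $\mat{M}=\pmat{0&1\\1&0}$ has distinct eigenvalues $\pm 1$ of equal modulus; since $\mat{M}$ is unitary, line 5 gives $\mat{Q}_k=\mat{M}$, $\mat{R}_k=\mat{I}_2$, hence $\mat{M}_k=\mat{M}$ for all $k$ and the iterates never approach triangular form. So the distinct-moduli hypothesis $\abs{\lambda_1}>\cdots>\abs{\lambda_n}$ (plus the LU condition on $\mat{X}^{-1}$, or a shifted/deflated variant of the algorithm) is genuinely needed, and the theorem as stated in the paper should be read with that correction. Your first caveat (the diagonal unitary factors $\mat{D}_k$, hence only ``essential'' convergence of $\mat{M}_k$ and $\mat{U}_k$) likewise matches how the literature states the result and must be addressed exactly as you say, by fixing phases or weakening the conclusion; and your remark that the algorithm's output satisfies $\mat{T}=\mat{U}\tranconj\mat{M}\mat{U}$ rather than $\mat{T}=\mat{U}\mat{M}\mat{U}\tranconj$ of Definition~\ref{def:schurdecomp} is a real (if harmless) convention mismatch worth flagging in the text. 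With those hypotheses strengthened and the limit interpreted essentially, your outline is a correct and complete plan of the proof the paper delegates to its reference.
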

\begin{proof}
    See~\cite[Ch. 4 \& 8]{arbenz2016lecturenotes}.
\end{proof}
Of course, Algorithm~\ref{alg:qralgorithm} is not directly applicable to real-word computations due to the results being defined through limits.
However, it can be shown that convergence is achieved in $\mathcal{O}(n^4)$ in the general case.
Even better, if the input is of the same form as $\mat{H}_j$, this can be reduced to $\mathcal{O}(n^3)$.
See~\cite[63--88, 145--161]{arbenz2016lecturenotes} for a more detailed read-up of convergence speed and improvements in the case of $\mat{H}_j$.
Additionally, the distinctness of all eigenvalues is solely necessary for convergence in finite time.
Still, this does not severely limit the applicability of Algorithm~\ref{alg:qralgorithm} due to numerous numerical \enquote{tricks} which can be employed to exploit properties of floating-point arithmetic to circumvent such problems in most cases.

Now that we can compute the Schur decomposition of our aggregated step matrix, we still do not know how we can get $\pi$ with eigenvalue approximately one.
Let $\mat{M}, \mat{U}, \mat{T} \in \C^{n \times n}$ be a Schur decomposition with $\mat{T} = \mat{U} \mat{M} \mat{U}\tranconj$.
Then, we can solve the system of linear equations given by
\[
    \mat{T}v = \lambda v \iff (\mat{T} - \lambda \mat{I}_n)v = 0
\]
to compute $v \in \R^n$ as an eigenvector of $\mat{T}$ with eigenvalue $\lambda \in \R$.
Such a solution must always exist, as every eigenvalue must have an associated eigenvector.
This only yields an eigenvector of $\mat{T}$.
But as $\mat{T}$ and $\mat{M}$ are similar, $\mat{U}\tranconj v$ is an eigenvector of $\mat{M}$ with eigenvalue $\lambda$.

In our specific use-case of finding an aggregated stationary distribution $\pi$ of $\mat{\Pi}$, we choose $\pi$ such that its corresponding eigenvalue $\lambda \in \C$ has real part closest to one with its imaginary part closest to zero.
In practice, we always find such an eigenvalue where the imaginary part is exactly zero and the real part is less than the machine epsilon away from one.

\subsection{Runtime and memory complexity}\label{subsec:runtime-and-memory-complexity}
Knowing an algorithm's runtime and memory complexity is important to see where its limits may lie.
This is no different in this case, as we want to know when using an aggregation instead of the original Markov chain is sensible.

To determine the runtime, we look at standard optimized implementations.
Such a standard is given by~\cite{blackford2002updated} for general matrices and vectors, while~\cite{duff2002sparse} provides the same for sparse matrices.
This special case is important as the transition matrices of large Markov chains often appear as sparse matrices.
Of course, there are many implementations of these de facto standards, all with minor differences in performance.
However, here, we are concerned only with asymptotic performance.
Thus, we can neglect performance gains made by improved data layouts or using dedicated hardware, explicitly made to compute such operations.

As a reference for runtime and memory complexity, we will use the quick reference guide for the individual operations~\cite{blas2024reference}.
Now, determining both the runtime and memory complexity becomes a simple task of looking what operations are done in each line of Algorithm~\ref{alg:arnoldi-iteration} and how often this line is repeated.
Taking $n$ to be the state space size of the original Markov chain and $j$ the size of the aggregated step matrix, this is shown in
\begin{table}[H]
    \centering
    \begin{tabular}{ccc||c|c|c|c}
        \hline
        \multicolumn{3}{l||}{Repetitions}                                                                                 & \multicolumn{1}{l|}{Line} & Operation           & \multicolumn{1}{l|}{Floating point operations} & \multicolumn{1}{l}{Allocations} \\ \hline\hline
        \multicolumn{3}{c||}{\multirow{3}{*}{1}}                                                                          & \multirow{3}{*}{2}        & 2-norm              & $2n$                                           & $n$                             \\
        \multicolumn{3}{c||}{}                                                                                            &                           & scale vector        & $n$                                            & $n$                             \\
        \multicolumn{3}{c||}{}                                                                                            &                           & copy vector         & 0                                              & $2n$                            \\ \hline\hline
        \multicolumn{1}{c|}{\multirow{2}{*}{}} & \multicolumn{2}{c||}{\multirow{2}{*}{$j$}}                               & \multirow{2}{*}{4}        & matrix-vector mult. & $2n^2$                                         & $n^2$                           \\
        \multicolumn{1}{c|}{}                  & \multicolumn{2}{c||}{}                                                   &                           & copy vector         & 0                                              & $2n$                            \\ \hline\hline
        \multirow{5}{*}{}                      & \multicolumn{1}{c|}{\multirow{5}{*}{}} & \multirow{5}{*}{$\frac{j}{2}$} & \multirow{2}{*}{6}        & dot product         & $2n$                                           & $2n$                            \\
        & \multicolumn{1}{c|}{}                  &                                &                           & copy vector         & 0                                              & $2n$                            \\ \cline{4-7}
        & \multicolumn{1}{c|}{}                  &                                & \multirow{3}{*}{7}        & scale vector        & $n$                                            & $n$                             \\
        & \multicolumn{1}{c|}{}                  &                                &                           & vector subtraction  & $n$                                            & $n$                             \\
        & \multicolumn{1}{c|}{}                  &                                &                           & copy vector         & 0                                              & $2n$                            \\ \hline\hline
        \multicolumn{1}{c|}{\multirow{4}{*}{}} & \multicolumn{2}{c||}{\multirow{4}{*}{$j$}}                               & \multirow{2}{*}{8}        & 2-norm              & $2n$                                           & $n$                             \\
        \multicolumn{1}{c|}{}                  & \multicolumn{2}{c||}{}                                                   &                           & copy scalar         & 0                                              & 1                               \\ \cline{4-7}
        \multicolumn{1}{c|}{}                  & \multicolumn{2}{c||}{}                                                   & \multirow{2}{*}{10}       & scale vector        & $n$                                            & $n$                             \\
        \multicolumn{1}{c|}{}                  & \multicolumn{2}{c||}{}                                                   &                           & copy vector         & $n$                                            & $2n$                            \\ \hline
    \end{tabular}
    \caption{Line-by-line breakdown of memory and runtime complexity of the Arnoldi iteration from Algorithm~\ref{alg:arnoldi-iteration} with number of repetitions for each line}
    \label{tab:complexity-arnoldi}
\end{table}
Now, we can add up the number of floating point operations and the number of allocations, while taking the number of repetitions as a multiplicative factor in mind.
This yields a runtime complexity of
\begin{align*}
    &\mathcal{O}\left( 1 \cdot (2n + n + 0) + j \cdot \left( 2n^2 + 0 + 2n + 0 + n + n + \frac{j}{2} \cdot (2n + 0 + n + n + 0) \right) \right)\\
    &= \mathcal{O}\left( 3n + j \cdot \left( 2n^2 + 4n + 2jn \right) \right) = \mathcal{O}\left( 2n^2 j + 4nj + 2j^2 n \right) = \mathcal{O}\left( n^2 j + j^2 n \right).
\end{align*}
Similarly, the memory complexity ends up being the same asymptotically:
\begin{align*}
    &\mathcal{O}\left( 1 \cdot (n + n + 2n) + j \cdot \left( n^2 + 2n + n + 1 + n + 2n + \frac{j}{2} \cdot (2n + 2n + n + n+ 2n) \right) \right)\\
    &= \mathcal{O}\left( 4n + j \cdot \left( n^2 + 6n + 1 + 4jn \right) \right) = \mathcal{O}\left( n^2 j + 6nj + j + 4j^2 n \right) = \mathcal{O}\left( n^2 j + j^2 n \right).
\end{align*}
Intuitively, this is no surprise: we need to orthogonalize each new vector $q_i$ against all previous vectors $q_1, \dots, q_{i - 1}$, resulting in the number of operations increasing quadratically.
The other summand $n^2 j$ is, as we have seen above, a result of the complexity of the matrix-vector product $q_\ell\tran \mat{P}$ being computed $i$ times.

However, as already mentioned, in the context of $\mat{P}$ being a transition matrix of a Markov chain, it often ends up being sparse, which enables us to decrease the runtime and memory complexity of computing the matrix-vector product to $\mathcal{O}(n)$.
If we then repeat the above calculations, we can see that both the runtime and memory complexity of the Arnoldi iteration are reduced to
\[\mathcal{O}\left( nj + j^2 n \right) = \mathcal{O}\left( j^2 n \right).\]
Thus, if the transition matrix is sparse, increasing the aggregation size is the main concern with regard to complexity.
As such, there will be some aggregation size $j$ with respect to $n$, where computing the aggregation and approximating the transient distribution will be slower than directly computing the transient distribution.
This problem is exacerbated by $\mat{H}_j$ not being sparse, so computing each approximated transient distribution is in $\mathcal{O}(j^2)$, while computing each transient distribution is in $\mathcal{O}(n)$ due to the sparseness of $\mat{P}$.

This analysis just determined the complexity of computing an Arnoldi aggregation of a given size, so it is not applicable to real-world applications.
There, we would usually not know the best size $j$ of the Arnoldi aggregation to minimize $\norm{e_k}_1$ while keeping $j$ as small as possible.
However, we have explored in Section~\ref{subsec:determining-convergence} how measuring the \enquote{quality} of an Arnoldi aggregation might be possible by using Algorithm~\ref{alg:qralgorithm} to compute $\inp{\abs{\pi}, \abs{\mat{H}_j\mat{Q}_j - \mat{Q}_j\mat{P}} \cdot \mathbf{1}_n}$.
Naively, this is done in each step of Algorithm~\ref{alg:arnoldi-iteration} after line ten.
So with the runtime and memory complexity of Algorithm~\ref{alg:qralgorithm} being in $\mathcal{O}(j^3)$ for our $\mat{H}_j$, we end up in $\mathcal{O}(n^2 j + j^2 n + j^4)$ for non-sparse $\mat{P}$ and in $\mathcal{O}(j^2 n + j^4)$ for sparse ones.

\section{Models and methodology}\label{sec:models-and-methodology}
All models we use for evaluating Arnoldi aggregations are \acp{CTMC}.
For \acp{CTMC}, the analogue to the transition matrix $\mat{P}$ in \acp{DTMC} is the \emph{generator matrix} $\mat{Q}$.
As we have defined the Arnoldi aggregation for \acp{DTMC} only, we transform a \ac{CTMC} into a \ac{DTMC} by following~\cite[361]{stewart2009markovchain}., where we set
$
\mat{P} \coloneqq \mat{I}_n + \frac{1}{\gamma} \mat{Q},
$
for a given generator matrix $\mat{Q} \in \R^{n \times n}$ and some $\gamma \geq \max_{1 \leq i \leq n}\abs{\mat{Q}(i, i)}$.
See~\cite[Sec. 9.10]{stewart2009markovchain} for a more in-depth coverage of \acp{CTMC}.

The first model we use for evaluation is a process algebra model of the \ac{RSVP} model from~\cite{wang2008rsvp}.
There, a wireless or mobile network is modelled through a lower network channel with capacity $N$, an upper network channel with capacity $M$ and several mobile nodesf which request resources from both channels.
Specifically, we use the model with $M = 7$, $N = 5$ and three mobile nodes because of its small size of 842 states and because of an exact aggregation with 234 states found in~\cite[42]{michel2025errbndmarkovaggr}.

Further, we use a model describing the dependability of interconnected workstation clusters from~\cite{haverkort2000cluster}.
Each workstation within a cluster can randomly break down and be repaired again by a repair unit.
For brevity, we will refer to this model as a cluster model from now on.
We use a Markov chain modelling two clusters and 20 workstations per cluster for evaluation.
This results in 15,540 states with an exact aggregation of 5523 states by~\cite[41]{michel2025errbndmarkovaggr}.
Again, this specific model is used for its exact aggregation and the larger state space size when compared to the \ac{RSVP} model above.

Another model is the Lotka-Volterra model pictured in~\cite{gillespie1977lotka}, which can be used to model chemical reactions or a population with prey and predators.
In contrast to the last two models, there is no known exact aggregation with a smaller state space size in this case~\cite[39]{michel2025errbndmarkovaggr}.
In our specific application, we set the maximum population size to 100, resulting in 10,021 states.
This model is interesting, as the lack of an exact aggregation enables us to compare how Arnoldi aggregations perform on Markov chains with exact aggregations and those without.

Lastly,~\cite[39]{michel2025errbndmarkovaggr} states the same reason for choosing a prokaryotic gene expression mode from~\cite{kierzek2001gene} as for the Lotka-Volterra model.
We will refer to this as a gene model for brevity.
Specifically, we chose a model with a population size of five, which results in our largest Markov chain with 43,957 states.
Thus, we can see the behaviour of Arnoldi aggregations for larger Markov chains while the computation time is still acceptable.

To examine and compare the properties of Arnoldi aggregations in practice to the theoretical properties, we primarily use the \ac{RSVP} model because of its exact aggregation and small size, which enable shorter computation times.
Of course, we will compare Arnoldi aggregation to another aggregation method across all models.

\section{Review}\label{sec:review}
Now that we have developed the theory of Arnoldi aggregations in Chapter~\ref{ch:finding-small-aggregations}, minimized numerical instabilities in Section~\ref{subsec:floating-point-errors}, proposed a measure in Section~\ref{subsec:determining-convergence} to determine when we can terminate the aggregation process and compiled models in Section~\ref{sec:models-and-methodology}, we can realize the experiments.
Firstly, in Section~\ref{subsec:validating-the-theory}, we will look to confirm Theorems~\ref{thrm:arnoldi-aggr-is-smallest-k-1-exact} and~\ref{thrm:arnoldi-aggr-is-smallest-exact}.
Additionally, we will look at how normalizing $\tilde{p}_k$ from Section~\ref{subsec:normalization-of-the-approximated-transient-distribution} performs in practice.
Section~\ref{subsec:comparison-to-exlump-aggregations} will compare the Arnoldi aggregation to another aggregation method regarding the error $\norm{e_k}_1$ and runtime.
\subsection{Validating the theory}\label{subsec:validating-the-theory}
To assess all further results properly, we need to check whether the floating-point errors predicted in Section~\ref{subsec:floating-point-errors} manifest as described there.
As our primary concern lies with neither the error term $\norm{\mat{H}_j \mat{Q}_j - \mat{Q}_j \mat{P}}_\infty$ nor the residual $h_{j, j + 1}$ converging to zero for an exact aggregation, we will use the RSVP model from~\cite{wang2008rsvp} with $M = 7$, $N = 5$, three mobile nodes and 842 states in total.
For proper $p_0$, this model oﬀers an exact aggregation at 234 states.
For each data point, we take a sample of 100 such $p_0$.
This results in
\begin{figure}[H]
    \centering
    \begin{tikzpicture}
        \begin{axis}[
        xlabel = {Aggregated state space size $j$},
        ylabel = {$\norm{\mat{H}_j \mat{Q}_j - \mat{Q}_j \mat{P}}_\infty$},
        xmin = 1, ymin = 0, xmax = 842, ymax = 5, plot,
        width=0.9\linewidth, height=6.5cm
        ]
        \addplot coordinates {
            (1,12.683130822619011) (41,3.773238111695366) (81,3.762928392392431) (121,4.296073226935646) (161,2.2086166263646607) (201,3.6898259210675524) (241,2.6871003363724686) (281,2.776307993487043) (321,3.427858057903539) (361,2.6487240229917774) (401,2.430279964366944) (441,1.8618755195217715) (481,2.6070753609152386) (521,2.8649811553414497) (561,2.6956933683381066) (601,1.802134791161413) (641,0.6525369917002242) (681,1.8064493564394717) (721,0.9953016601520975) (761,0.891840768968981) (801,0.19453626494281007) (841,0.00011986822927490978)
        };
        \addlegendentry{CGS2}
        \addplot coordinates {
            (1,12.667310183439318) (41,3.9110813248348713) (81,3.5563180536697288) (121,4.763492417111604) (161,2.525479771792804) (201,3.889021509192359) (241,2.6106358887820162) (281,2.659682140791283) (321,3.4146887661614143) (361,2.6333898618205462) (401,2.654535527912069) (441,1.9190759190989743) (481,2.6609659004080517) (521,2.6285567265867886) (561,2.7163445886155957) (601,1.6089661277095044) (641,0.7464013120225909) (681,1.6364188057861622) (721,1.0681824708471095) (761,0.86621469759044) (801,0.16142991219134573) (841,4.8997238016761255e-6)
        };
        \addlegendentry{MGS2}
        \addplot coordinates {
            (1,16.315534111541165) (9,4.8412653788963675) (17,4.383470586954201) (25,4.5346167420827745) (33,5.23198451401072) (41,3.9785207302835657) (49,4.30182887736379) (57,6.019495169620646) (65,4.444916971186841) (73,4.858479720508327) (81,12.534639838080098)
        };
        \addlegendentry{CGS}
        \end{axis}
    \end{tikzpicture}
    \caption{$\norm{\mat{H}_j\mat{Q}_j - \mat{Q}_j\mat{P}}_\infty$ for Arnoldi aggregations of the RSVP model}
    \label{fig:staticerrosrsvp}
\end{figure}
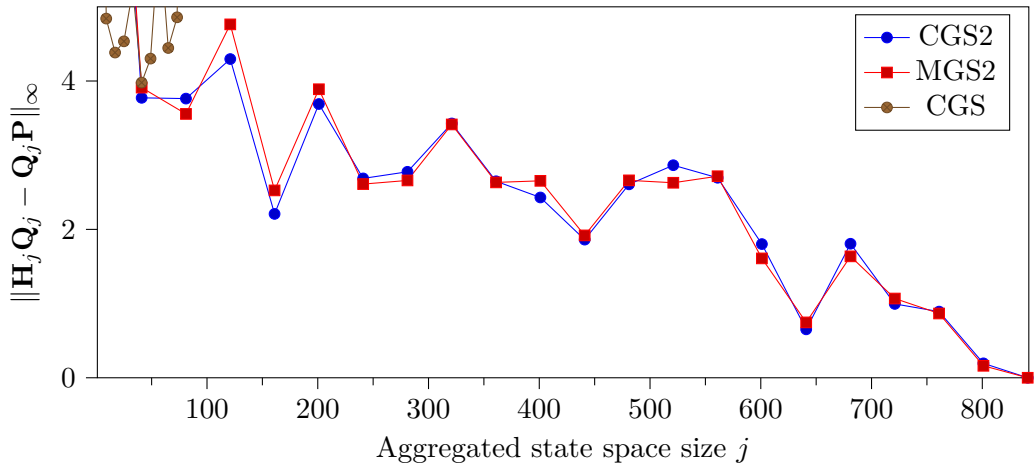
We have omitted \ac{CGSIR}, \ac{MGS} and \ac{MGSIR} as methods of orthonormalization as they barely differ from their variant \ac{CGS} and \ac{MGS2}.
Firstly, it is clear that \ac{CGS} is too unstable for aggregations exceeding 100 in size.
Further, the performance between \ac{CGS2} and \ac{MGS2} barely differs, with an average difference of $8.09\%$ from the larger value.
Thus, we will solely use \ac{CGSIR} for future experiments as it enables the best performance, comparatively.
It is also apparent from Figure~\ref{fig:staticerrosrsvp} that the Arnoldi aggregation, as predicted, fails the criterion of $\norm{\mat{H}_j \mat{Q}_j - \mat{Q}_j \mat{P}}_\infty = 0$ for an exact aggregation, which it must have found at $j \leq 234$ according to Theorem~\ref{thrm:arnoldi-aggr-is-smallest-exact}.
These numerical problems match our prediction from Section~\ref{subsec:floating-point-errors}.
Still, we will later see that an aggregation exists at with a state space size around $j = 234$ which is exact for all practical purposes.

In general, we want to see whether the attributes described in Lemma~\ref{lem:arnoldi-aggr-k0-exact} and~\ref{lem:arnoldi-aggr-k0-geq-n-exact} also hold in practice.
We continue to use the same RSVP model as above.
To first check $(j-1)$-exactness for an Arnoldi aggregation of size $j$ from Lemma~\ref{lem:arnoldi-aggr-k0-exact}, we will choose random $p_0$ at 1000 samples for each data point.
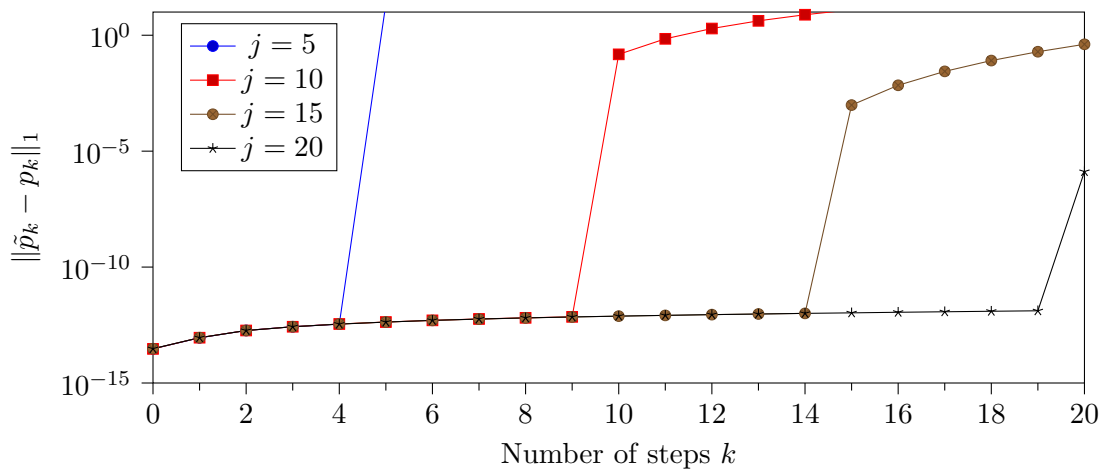
\begin{figure}[H]
    \centering
    \begin{tikzpicture}
        \begin{axis}[
        xlabel = {Number of steps $k$},
        ylabel = {$\norm{\tilde{p}_k - p_k}_1$},
        ymode=log, xmin = 0, ymin = 1e-15,
        xmax = 20, ymax = 10, plot, legend pos = north west,
        width=0.9\linewidth, height=6.5cm
        ]
        \addplot coordinates {
            (0,2.987466036213274e-14) (1,8.88744439607135e-14) (2,1.8467645406330135e-13) (3,2.673773887193835e-13) (4,3.476796704532174e-13) (5,22.718117231269844)
        };
        \addlegendentry{$j = 5$}
        \addplot coordinates {
            (0,2.961090466858519e-14) (1,8.993236880874735e-14) (2,1.842049130351002e-13) (3,2.6694891393987625e-13) (4,3.50963648152747e-13) (5,4.263810198261915e-13) (6,5.04259444548117e-13) (7,5.788226367282135e-13) (8,6.48084115314882e-13) (9,7.13287985385035e-13) (10,0.14999718776969456) (11,0.701175870155596) (12,1.9480741119205474) (13,4.182526999286293) (14,7.660702487880517) (15,12.587048804589584) (16,19.10808511459387) (17,27.318273013551583) (18,37.26382750973341) (19,48.94654342029476) (20,62.3353629372427)
        };
        \addlegendentry{$j = 10$}
        \addplot coordinates {
            (0,3.0213651685959904e-14) (1,9.039243042623956e-14) (2,1.8387090522465513e-13) (3,2.678500761702828e-13) (4,3.50304072449207e-13) (5,4.267340466981265e-13) (6,5.010857620934127e-13) (7,5.696452924542806e-13) (8,6.403265224005308e-13) (9,7.03952535157262e-13) (10,7.659470667650049e-13) (11,8.240957910349088e-13) (12,8.892292878755097e-13) (13,9.480772733279333e-13) (14,1.0065685541080732e-12) (15,0.0009709092928635403) (16,0.006936863756046161) (17,0.027612255561662174) (18,0.0808566773332774) (19,0.19437820059874678) (20,0.4064030101914703 20)
        };
        \addlegendentry{$j = 15$}
        \addplot coordinates {
            (0,3.010965916180315e-14) (1,8.994517946739052e-14) (2,1.8321746854460158e-13) (3,2.6627689944004616e-13) (4,3.4605518863875335e-13) (5,4.238016925780614e-13) (6,5.025471249516302e-13) (7,5.743064729902449e-13) (8,6.434094966450371e-13) (9,7.064668984610088e-13) (10,7.664307579676287e-13) (11,8.278055277025854e-13) (12,8.891937785528305e-13) (13,9.434192808876925e-13) (14,1.0079269993580846e-12) (15,1.060631188681854e-12) (16,1.1242686249547543e-12) (17,1.1817561648597244e-12) (18,1.243682607572495e-12) (19,1.302104218848234e-12) (20,1.3175133899176895e-6)
        };
        \addlegendentry{$j = 20$}
        \end{axis}
    \end{tikzpicture}
    \caption{$\norm{e_k}_1$ for differing $j$ and $0 \leq k \leq 20$ in Arnoldi aggregations of a RSVP model}
    \label{fig:dynamicerrorrsvp-exact-k-exact}
\end{figure}
There is a clear loss of accuracy for $k > j - 1$ steps, indeed indicating $(j-1)$-exactness.
Note that for larger $j$, the errors $\norm{e_k}_1$ for $k \leq j - 1$ are also of the same magnitude as here, but with a lesser loss of accuracy after those initial steps.

To see whether the exactness from Lemma~\ref{lem:arnoldi-aggr-k0-geq-n-exact} holds as well, we will look at larger aggregation sizes up to around $j = 234$, as we know there is an exact aggregation of this size from~\cite[40]{michel2025errbndmarkovaggr}.
Thus, we look at (approximated) transient distributions up to $k = 10^6$.
Note that for any number of steps $k$, we cannot expect $\norm{e_k}_1$ to be less than $k \cdot \macheps$, where $\macheps$ is the rounding machine epsilon.
This is because of the errors accumulating with each further step.
For IEEE-754 64-bit floating-point numbers, it is $\macheps \approx 1.11 \cdot 10^{-16}$.
At the same time, we will examine how the criterion for predicting exact aggregations from Section~\ref{subsec:determining-convergence} performs in reality.
We will use random $p_0$, but we also choose $p_0$ in such a way that it is ensured that there is an exact aggregation with 234 states.
Due to the considerable computational effort required to compute $e_{10^6}$, we take ten samples per data point.
\begin{figure}[H]
    \centering
    \begin{tikzpicture}
        \begin{axis}[
            xlabel = {Aggregated state space size},
            ylabel = {Error},
            ymode=log, xmin = 1, ymin = 1e-14, domain=0:300, legend style={cells={align=left}},
            xmax = 300, ymax = 1, plot, legend pos = north east,
            width=0.9\linewidth, height=13cm
        ]
            \addplot coordinates {
                (1,0.9999999999996783) (14,1.0590014209731492e93) (27,1.093927771271882e9) (40,12.935893732569744) (53,1.601882559284899) (66,1.0263999790864988) (79,4.440360703614884e9) (92,0.10823023233173526) (105,0.006656381653855471) (118,0.0011022447661217093) (131,0.00012873844936988325) (144,2.868205016207439e-6) (157,1.3213192209434936e-10) (170,1.3114389255293993e-12) (183,1.3070378506236906e-12) (196,1.303811571017354e-12) (209,1.305917127743144e-12) (222,1.3076426473347334e-12) (235,1.3091230464697159e-12) (248,1.30885011763483e-12) (261,1.308850117634904e-12) (274,1.308850117634904e-12) (287,1.308850117634904e-12) (300,1.308850117634904e-12)
            };
            \addlegendentry{exact $p_0$, $\norm{e_{10^4}}_1$}
            \addplot coordinates {
                (1,0.9999999999996796) (14,8.682901937131364e17) (27,2465.614499136977) (40,1.499933270633242) (53,1.0019176676563952) (66,0.641075662678966) (79,0.4098992883744737) (92,3.9380877553089775) (105,0.008280336890477236) (118,0.0037067048536584325) (131,0.0004946966456049212) (144,1.6735135945241204e-5) (157,1.8784141269801397e-7) (170,1.6130074700486288e-9) (183,1.276130098916059e-10) (196,1.3477150728000954e-12) (209,1.3494092989391435e-12) (222,1.3454557357461826e-12) (235,1.3445997750046378e-12) (248,1.3382083761360866e-12) (261,1.347495385878242e-12) (274,1.34417131793331e-12) (287,1.34417131793331e-12) (300,1.34417131793331e-12)
            };
            \addlegendentry{non-exact $p_0$, $\norm{e_{10^4}}_1$}
            \addplot coordinates {
                (1,0.9999999999971305) (14,NaN) (27,NaN) (40,2.6210701366959337e127) (53,7.382732221673995e65) (66,3.47238182454086e242) (79,4.1361829948267124e52) (92,NaN) (105,1049.964897942284) (118,0.6778038001260972) (131,0.5982002267321362) (144,0.14791531930119853) (157,2.1103031129639635e-6) (170,3.0179119464193626e-10) (183,2.0514197359653264e-10) (196,3.499224377307924e-10) (209,2.6997817469993446e-10) (222,1.4807839176906262e-10) (235,9.612420811688927e-11) (248,8.233896140165956e-11) (261,7.276852916613362e-11) (274,8.873557096644478e-11) (287,6.941767147831942e-11) (300,1.0133929029448865e-10)
            };
            \addlegendentry{exact $p_0$, $\norm{e_{10^6}}_1$}
            \addplot coordinates {
                (1,0.9999999999971305) (14,NaN) (27,NaN) (40,8.982260194890708e107) (53,1.504391780018984e73) (66,2.773861182232495e21) (79,9.616735326195112e14) (92,NaN) (105,154368.8085179561) (118,NaN) (131,31.16255372603714) (144,0.1160158113476204) (157,0.00195668393705069) (170,1.734235506987551e-5) (183,5.459926185828008e-6) (196,2.4494770614446536e-8) (209,1.709799700882245e-10) (222,1.4506467556195953e-10) (235,1.6746438345988565e-10) (248,2.2211330033513035e-10) (261,2.321736448038248e-10) (274,2.368279248713506e-10) (287,2.4114263538354774e-10) (300,2.3142362550828243e-10)
            };
            \addlegendentry{non-exact $p_0$, $\norm{e_{10^6}}_1$}
            \addplot coordinates {
                (1,0.7041328695662974) (14,0.12342073968442469) (27,0.02532684971642423) (40,0.007408188199569539) (53,0.002666716652444382) (66,0.0014533531651663705) (79,0.002564964200176851) (92,0.0005946063531233087) (105,0.00015526454138921808) (118,0.00023040410830026) (131,5.091677994027011e-5) (144,3.164163069191886e-6) (157,9.429737820802962e-11) (170,1.7513942952862348e-14) (183,1.6594556887428333e-14) (196,1.6161132854087552e-14) (209,1.57220080134689e-14) (222,1.562629761697312e-14) (235,1.5613699267473614e-14) (248,1.5612424595228052e-14) (261,1.5612177757245902e-14) (274,1.5612177760215623e-14) (287,1.5612177758025978e-14) (300,1.5612177758066687e-14)
            };
            \addlegendentry{exact $p_0$,\\ $\inp*{\abs{\pi}, \abs{\mat{H}_j \mat{Q}_j - \mat{Q}_j \mat{P}} \cdot \mathbf{1}_n}$}
            \addplot coordinates {
                (1,0.61271435355893) (14,0.13008844719682303) (27,0.019734373894162917) (40,0.006051677593685434) (53,0.002347030137527128) (66,0.0007684477820357159) (79,0.001361972498503312) (92,0.003775724460595941) (105,0.00021692615194793293) (118,0.0009337011778195679) (131,0.00018333597018766514) (144,4.3384036873767105e-6) (157,9.65952509301048e-8) (170,9.021946924875666e-10) (183,7.38542836959882e-10) (196,1.161515835346837e-12) (209,1.5337653260138767e-14) (222,1.5254605832760032e-14) (235,1.5238685743953203e-14) (248,1.522610691894044e-14) (261,1.522603265192457e-14) (274,1.52259987841452e-14) (287,1.5225998772688912e-14) (300,1.5225998773445174e-14)
            };
            \addlegendentry{non-exact $p_0$,\\ $\inp*{\abs{\pi}, \abs{\mat{H}_j \mat{Q}_j - \mat{Q}_j \mat{P}} \cdot \mathbf{1}_n}$}
            \addplot+[no marks, gray]
                {1.11 * 1e-12}
            node[pos=0.25, above] {$10^4 \cdot \macheps$};
            \addplot+[no marks, gray]
                {1.11 * 1e-10}
            node[pos=0.25, above] {$10^6 \cdot \macheps$};
        \end{axis}
    \end{tikzpicture}
    \caption{Different error terms in relation to $j$ of Arnoldi aggregations of a RSVP model}
    \label{fig:findingexactconvcriterion}
\end{figure}
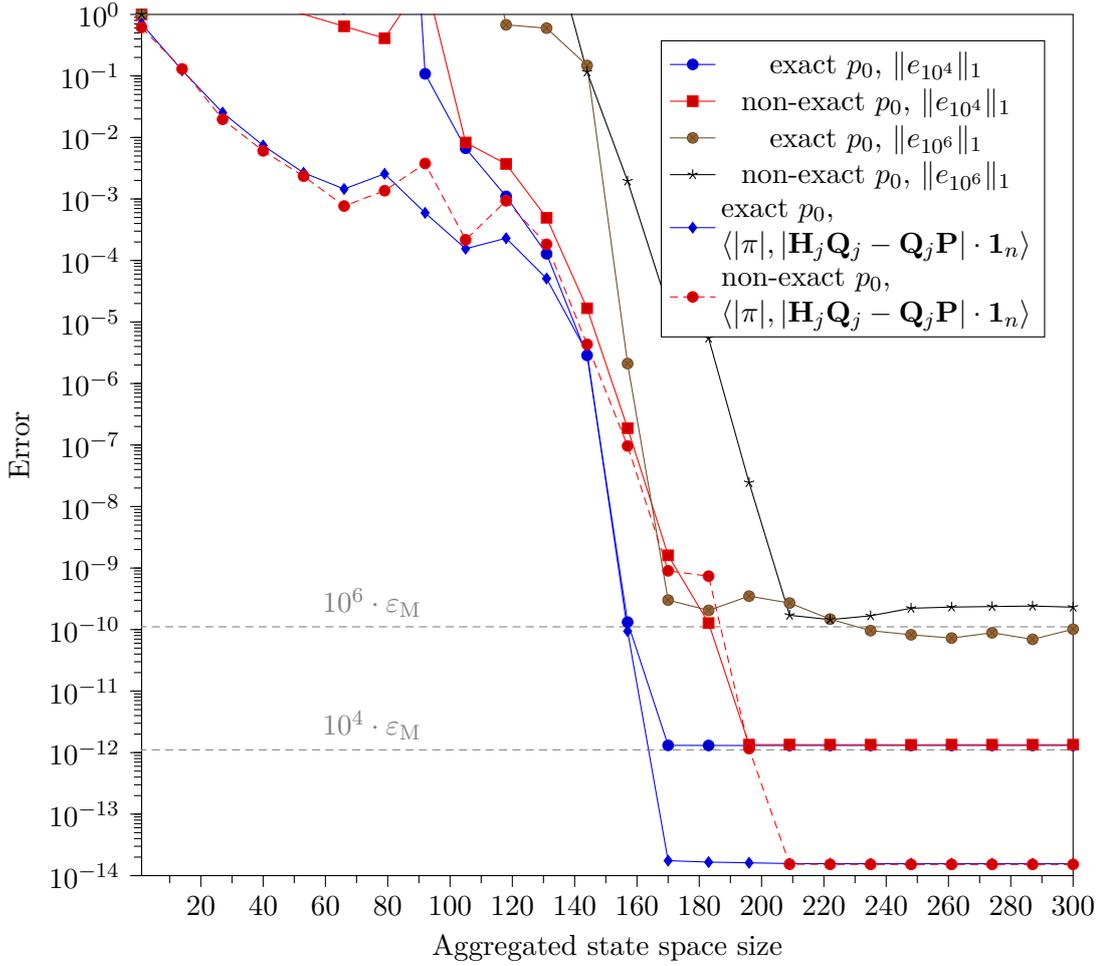
For $j \geq 175$, no further improvement in $e_k$ is made for $p_0$, which enable an exact aggregation, as we have reached machine precision.
Note that this does not necessarily mean we have found an exact aggregation because we would need to consider all possible $k$ to confirm this.
We reach an equally low error for random $p_0$, but only after $j \geq 215$.
Furthermore, we can see that $\inp{\abs{\pi}, \abs{\mat{H}_j \mat{Q}_j - \mat{Q}_j \mat{P}} \cdot \mathbf{1}_n}$ does indeed plateau at the same time as $\norm{e_k}_1$ for $k = 10^6$, hinting that we can at least predict whether the aggregation is exact in the first $10^6$ steps.
We now take a closer look at this heuristic.

Firstly, we need to confirm that the behaviour from Figure~\ref{fig:findingexactconvcriterion} is not exclusive to the RSVP model with its exact aggregations.
We will use the workstation cluster model, which models 20 workstations per cluster, resulting in a \ac{DTMC} with 15,540 states and an exact aggregation for fitting $p_0$ with 5523 states~\cite[40]{michel2025errbndmarkovaggr}.
Additionally, we will use the Lotka-Volterra model with 10,201 states.
For the Lotka-Volterra model, there is no known exact aggregation.

Although not plotted here, we achieve very similar results as in Figure~\ref{fig:findingexactconvcriterion}.
For the workstation cluster model, $\norm{e_{10^6}}_1$ and $\inp{\abs{\pi}, \abs{\mat{H}_j\mat{Q}_j - \mat{Q}_j\mat{P}} \cdot \mathbf{1}_n}$ cease to change significantly at $j \approx 400$, reducing the state space size 38-fold for random $p_0$ and for $p_0$ allowing an exact aggregation in~\cite[40]{michel2025errbndmarkovaggr}.
The result is similar for the Lotka-Volterra model, where no improvement to $\norm{e_{10^6}}_1$ and $\inp{\abs{\pi}, \abs{\mat{H}_j\mat{Q}_j - \mat{Q}_j\mat{P}} \cdot \mathbf{1}_n}$ is made after $j \approx 750$, giving us a 13-fold reduction in the state space size.
Here, we only chose random $p_0$.
For all three models, $\norm{e_{10^4}}_1 \approx 10^{-12}$, $\norm{e_{10^6}}_1 \approx 10^{-9}$ and $\inp{\abs{\pi}, \abs{\mat{H}_j\mat{Q}_j - \mat{Q}_j\mat{P}} \cdot \mathbf{1}_n} \approx 10^{-14}$ all roughly hold true once they stop improving with increased state space size.

Firstly, this shows us that we can find aggregations that perform like exact ones for up to $10^6$ steps, even if the formal condition for exactness is not fulfilled due to numerical errors.
Importantly, we have only considered $\norm{e_k}_1$ for up to $k = 10^6$, so larger inaccuracies might still arise for larger $k$.
Thus, the heuristic of $\inp{\abs{\pi}, \abs{\mat{H}_j\mat{Q}_j - \mat{Q}_j\mat{P}} \cdot \mathbf{1}_n}$ seemingly indicates that an aggregation will no longer improve when $j$ is increased.
So, while we cannot confirm Lemma~\ref{lem:arnoldi-aggr-k0-geq-n-exact}, we have found a useful heuristic for predicting when an Arnoldi aggregation will behave similarly to an exact one.
This heuristic is given through the rate of change of $\inp{\abs{\pi}, \abs{\mat{H}_j\mat{Q}_j - \mat{Q}_j\mat{P}} \cdot \mathbf{1}_n}$, with respect to $j$, nearing zero.
Additionally, we get $\tilde{p}$ through this heuristic.
Although we have again not shown it here, $\norm{\tilde{p}\tran - \tilde{p}\tran \mat{P}}_1$ ceases to change along with $\inp{\abs{\pi}, \abs{\mat{H}_j\mat{Q}_j - \mat{Q}_j\mat{P}} \cdot \mathbf{1}_n}$ at $\norm{\tilde{p}\tran - \tilde{p}\tran \mat{P}}_1 \approx 10^{-14}$.
Thus, $\tilde{p}$ provides a good approximation of a stationary distribution.

Further, we know from Figure~\ref{fig:staticerrosrsvp} that the error bound from Proposition~\ref{prop:general-error-bound} will yield inadequate bounds for the error.
As such, we now want to check how the bound from Proposition~\ref{prop:specific-error-bound} performs in practice, by looking at the difference between the error bound and the actual error.
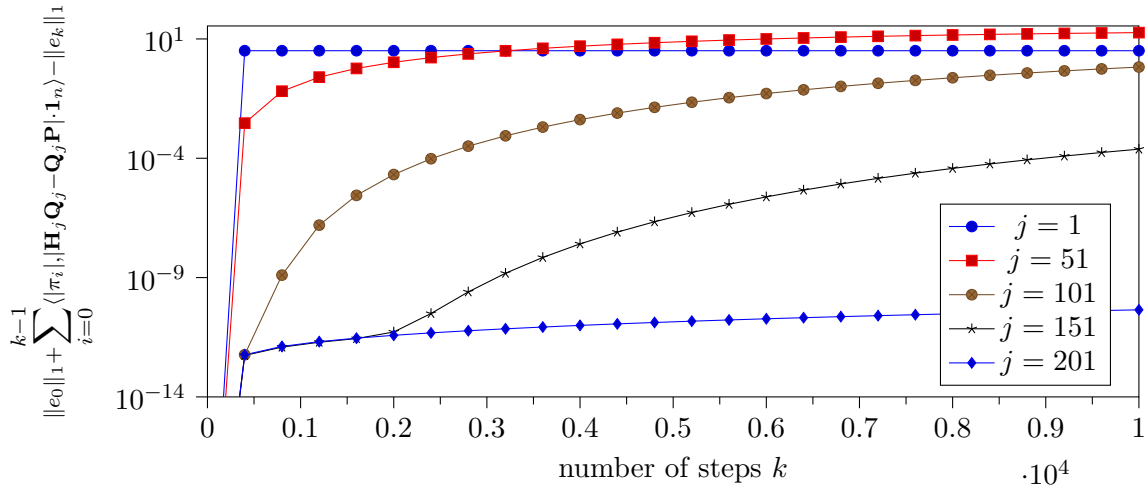
\begin{figure}[H]
    \centering
    \begin{tikzpicture}
        \begin{axis}[
        xlabel = {number of steps $k$},
        ylabel = {${\scriptstyle \norm{e_0}_1 + {\displaystyle\sum_{i = 0}^{k - 1}} \inp{\abs{\pi_i}, \abs{\mat{H}_j \mat{Q}_j - \mat{Q}_j \mat{P}} \cdot \mathbf{1}_n} - \norm{e_k}_1}$},
        ymode=log, xmin = 0, ymin = 1e-14,
        xmax = 10000, ymax = 35, plot, legend pos = south east,
        width=0.9\linewidth, height=6.5cm
        ]
        \addplot coordinates {
            (0.0,1e-25) (400.0,3.209570959644669) (800.0,3.209570959644692) (1200.0,3.209570959644715) (1600.0,3.2095709596447364) (2000.0,3.209570959644758) (2400.0,3.2095709596447786) (2800.0,3.2095709596447968) (3200.0,3.2095709596448145) (3600.0,3.2095709596448305) (4000.0,3.209570959644846) (4400.0,3.20957095964486) (4800.0,3.2095709596448723) (5200.0,3.2095709596448847) (5600.0,3.2095709596448962) (6000.0,3.209570959644905) (6400.0,3.2095709596449136) (6800.0,3.209570959644922) (7200.0,3.209570959644929) (7600.0,3.209570959644936) (8000.0,3.2095709596449424) (8400.0,3.2095709596449487) (8800.0,3.209570959644954) (9200.0,3.2095709596449593) (9600.0,3.2095709596449638) (10000.0,3.209570959644968)
        };
        \addlegendentry{$j = 1$}
        \addplot coordinates {
            (0.0,1e-25) (400.0,0.002956773795413428) (800.0,0.0644970416092984) (1200.0,0.2505674039780908) (1600.0,0.5811865822684588) (2000.0,1.0529858713765394) (2400.0,1.6523648382042457) (2800.0,2.36173564165444) (3200.0,3.1625157983145664) (3600.0,4.036558852827591) (4000.0,4.96682837797443) (4400.0,5.937704913112175) (4800.0,6.93509864116552) (5200.0,7.94646629955426) (5600.0,8.960775418720148) (6000.0,9.96843272822489) (6400.0,10.961200522956757) (6800.0,11.932101961617168) (7200.0,12.875322171532597) (7600.0,13.786107779000197) (8000.0,14.660667088238224) (8400.0,15.496075447430728) (8800.0,16.2901829034211) (9200.0,17.041524625738184) (9600.0,17.749239416672076) (10000.0,18.412992730938583)
        };
        \addlegendentry{$j = 51$}
        \addplot coordinates {
            (0.0,1e-25) (400.0,5.85482792827059e-13) (800.0,1.2671731562957077e-9) (1200.0,1.573962653869359e-7) (1600.0,2.7894162141880283e-6) (2000.0,2.08111887832645e-5) (2400.0,9.549122187461983e-5) (2800.0,0.0003212747063261207) (3200.0,0.000872156391883284) (3600.0,0.002023982976237758) (4000.0,0.004168686533432121) (4400.0,0.007818926295075367) (4800.0,0.01360319252293028) (5200.0,0.022253141188527496) (5600.0,0.03458454954937572) (6000.0,0.05147483912952013) (6400.0,0.0738389376539302) (6800.0,0.10260533305225238) (7200.0,0.13869361199423724) (7600.0,0.18299441667819022) (8000.0,0.23635213538605332) (8400.0,0.2995517681030016) (8800.0,0.3733076443515429) (9200.0,0.45825608508155574) (9600.0,0.5549506725270272) (10000.0,0.6638599989955146)
        };
        \addlegendentry{$j = 101$}
        \addplot coordinates {
            (0.0,1e-25) (400.0,5.449578536565926e-13) (800.0,1.2199885668596097e-12) (1200.0,1.949762767136002e-12) (1600.0,2.78289068163491e-12) (2000.0,5.239164634587024e-12) (2400.0,3.127888356385234e-11) (2800.0,2.510916577974461e-10) (3200.0,1.519940257058623e-9) (3600.0,6.979077705256555e-9) (4000.0,2.5790441780776303e-8) (4400.0,8.035615171372783e-8) (4800.0,2.183860364023512e-7) (5200.0,5.308083096341535e-7) (5600.0,1.1758815186676715e-6) (6000.0,2.4090335035793576e-6) (6400.0,4.617055950008738e-6) (6800.0,8.354912983439538e-6) (7200.0,1.4382534129408111e-5) (7600.0,2.3699092618401133e-5) (8000.0,3.7572494233649685e-5) (8400.0,5.756197033160351e-5) (8800.0,8.553231133185647e-5) (9200.0,0.0001236588482636703) (9600.0,0.0001744229058559644) (10000.0,0.0002405980102135506)
        };
        \addlegendentry{$j = 151$}
        \addplot coordinates {
            (0.0,1e-25) (400.0,5.822089914417355e-13) (800.0,1.3031673085106596e-12) (1200.0,2.0710842867026534e-12) (1600.0,2.893872269876558e-12) (2000.0,3.810489985916907e-12) (2400.0,4.815254314601078e-12) (2800.0,5.923887692737385e-12) (3200.0,7.143244247726137e-12) (3600.0,8.468399720887725e-12) (4000.0,9.920650525085925e-12) (4400.0,1.1471891769512638e-11) (4800.0,1.3135124365725011e-11) (5200.0,1.4903985985680947e-11) (5600.0,1.6781167897320624e-11) (6000.0,1.877881860770462e-11) (6400.0,2.087157814928798e-11) (6800.0,2.307738916996836e-11) (7200.0,2.5386301121092934e-11) (7600.0,2.7800025732967822e-11) (8000.0,3.0332348231810974e-11) (8400.0,3.2969619820755736e-11) (8800.0,3.571001643189153e-11) (9200.0,3.8548970349921005e-11) (9600.0,4.149600570121275e-11) (10000.0,4.454242392815773e-11)
        };
        \addlegendentry{$j = 201$}
        \end{axis}
    \end{tikzpicture}
    \caption{Error bounds in the RSVP model for different $j$ and $k$}
    \label{fig:rsvpbndcmp}
\end{figure}
For larger $j$, the approximations get much better.
Additionally, as can be seen through the aggregations with $j = 151$ and $j = 201$ for $1 \leq k \leq 0.2 \cdot 10^4$, the approximation from Proposition~\ref{prop:specific-error-bound} stops changing up to some $k$ for large enough $j$.
This invariance is also observed for smaller and larger $k$ and $j$ on all different models from Section~\ref{sec:models-and-methodology}.

As a last point, we will look at how normalizing $\tilde{p}_k$ can help to increase its accuracy.
Once again, we will use the same RSVP model as before, with random $p_0$ and $j = 150$ and a sample size of 1000 per data point.
\begin{figure}[H]
    \centering
    \begin{tikzpicture}
        \begin{axis}[
        xlabel = {number of steps $k$},
        ylabel = {$\norm{\tilde{p}_k - p_k}_1$},
        ymode=log, xmin = 0, ymin = 1e-3,
        xmax = 200000, ymax = 0.1, plot, legend pos = south east,
        width=0.9\linewidth, height=6.5cm
        ]
        \addplot coordinates {
            (0,0.0) (10000,2.537279813962671e-5) (20000,0.0013320738429553863) (30000,0.005575035868948396) (40000,0.011572673416499573) (50000,0.018251607635968913) (60000,0.025188869014703328) (70000,0.032242467201582234) (80000,0.039366691864927306) (90000,0.04654713986699468) (100000,0.053852021566231686) (110000,0.061062441261347244) (120000,0.06839600582490436) (130000,0.07578033116130624) (140000,0.08321571909369196) (150000,0.09070251826089082) (160000,0.09824106772513716) (170000,0.10583172357249224) (180000,0.1134748454506605) (190000,0.12117079583785485) (200000,0.1289199398275935)
        };
        \addlegendentry{Never normalize $\tilde{p}_k$}
        \addplot coordinates {
            (0,0.0) (10000,4.298194306911224e-5) (20000,0.002695751446101786) (30000,0.01049857234732662) (40000,0.01756877090274203) (50000,0.020570403878014013) (60000,0.020199532297681923) (70000,0.01795297078985981) (80000,0.021333775779624083) (90000,0.025259632401285904) (100000,0.02920134298877995) (110000,0.03315840029971157) (120000,0.037130719493092976) (130000,0.04111832355149234) (140000,0.045121262532450856) (150000,0.04913959330275832) (160000,0.05317337454570795) (170000,0.0572226713721627) (180000,0.06128753808020577) (190000,0.06536803403983552) (200000,0.06946421934342437)
        };
        \addlegendentry{Normalize $\tilde{p}_k$ as in Section~\ref{subsec:normalization-of-the-approximated-transient-distribution}}
        \addplot coordinates {
            (0,0.0) (10000,7.367624040086022e-5) (20000,0.005085910908381393) (30000,0.020204482438770013) (40000,0.03482554969193824) (50000,0.043566977258760554) (60000,0.04769999205821289) (70000,0.04941580187626979) (80000,0.05007424844312767) (90000,0.05031452654713874) (100000,0.05039932690074875) (110000,0.05042858347644811) (120000,0.05043852050414913) (130000,0.050441859143748144) (140000,0.0504429723830407) (150000,0.050443341619568934) (160000,0.050443463633786455) (170000,0.05044350384894726) (180000,0.05044351707985791) (190000,0.05044352142728976) (200000,0.05044352285454105)
        };
        \addlegendentry{Always normalize $\tilde{p}_k$}
        \end{axis}
    \end{tikzpicture}
    \caption{Normalization of $\tilde{p}_k$ under different conditions.}
    \label{fig:normtest}
\end{figure}
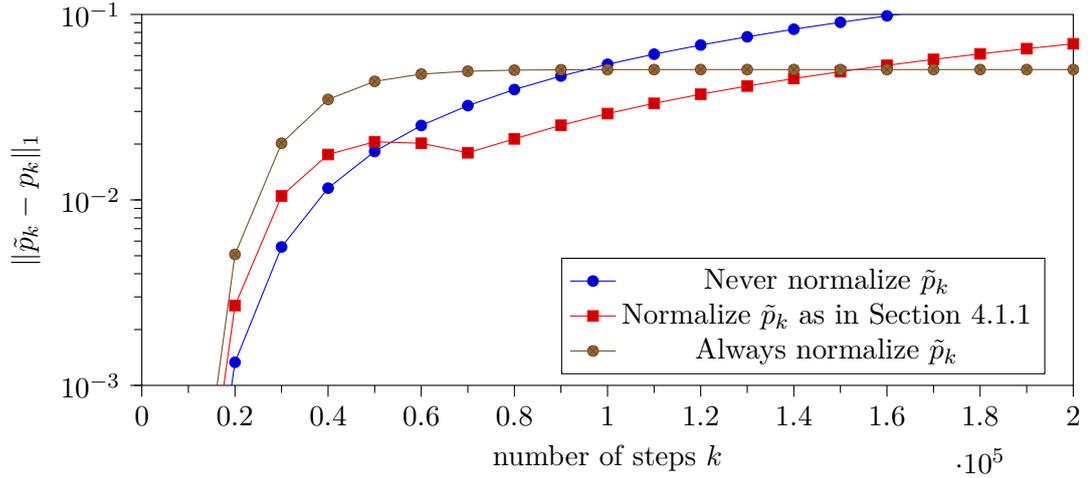
We can already see that the propositions in Section~\ref{subsec:normalization-of-the-approximated-transient-distribution} improve $\tilde{p}_k$, especially for large $k$.
On the other hand, for smaller $k$, we have a worsening effect, either caused by floating-point errors or by some of the postulated conjectures being false.
Additionally, the three conjectures are essential, as the two proven theorems can be applied only very rarely in practice.
The same behaviour is observed again to a more extreme degree for $\tilde{p}_k$ if we choose to normalize it in all cases.
Although not shown here, the improvements made by always normalizing are much more pronounced when looking at $k > 2 \cdot 10^5$, as the error stays at around $0.05$, while the error term slowly diverges in the other two cases.
Further, $\tilde{p}_k$ does not gain or lose meaningful accuracy after normalization for larger state space sizes because $\tilde{p}_k$ is a good approximation of $p_k$; thus, $\norm{\tilde{p}_k}_1$ is close to one.
Thus, normalizing $\tilde{p}_k$ is mainly useful if it is less accurate due to using a small aggregation.
Nonetheless, continuing from here, we will never normalize $\tilde{p}_k$ because sometimes the error worsens, and we want to compare Arnoldi aggregations to other aggregations where normalization does not occur either.

\subsection{Comparison to Exlump aggregations}\label{subsec:comparison-to-exlump-aggregations}
In Section~\ref{subsec:validating-the-theory}, we confirm that our implementation of the Arnoldi aggregation behaves as expected.
Thus, we now compare it to the Exlump Algorithm from~\cite[Alg. 3]{michel2025errbndmarkovaggr}.
There, a stricter definition of aggregation is used, where we partition the original state space into the aggregated one.
Intuitively, we can view the disaggregation matrix $\mat{A}$ as a matrix where $\mat{A}(i, j)$ is the probability that we are in the $j$-th state under the assumption that we are in the $i$-th state of the aggregated state space.
Different ways of choosing such an $\mat{A}$ are shown in~\cite[4, 39]{michel2025errbndmarkovaggr}.
We choose $\mat{A}(i, j)$ as the reciprocal of the size of the $i$-th aggregate and will call Exlump aggregations with such $\mat{A}$ \emph{uniform}.
Furthermore, Exlump aggregations are computed independent of a starting vector $p_0$, making it perform differently for different $p_0$.
Specifically, if $p_0$ is uniform among the states within every aggregate, the initial error $\norm{e_0}_1$ is especially small.
We call such $p_0$ \emph{exact}.
We start by comparing the error made by both aggregation methods for the RSVP, cluster, and gene model.
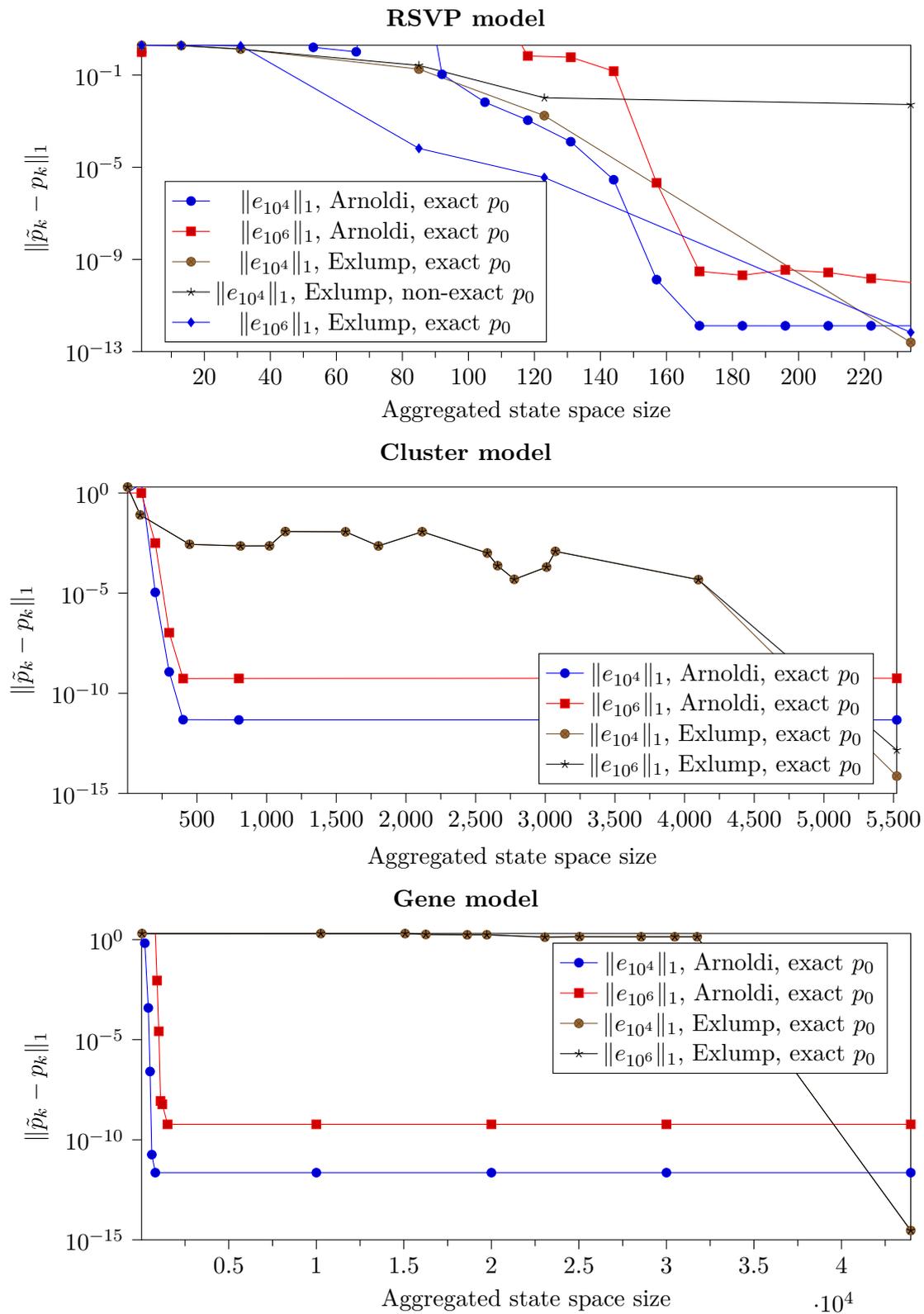
\begin{figure}[H]
    \centering
    \textbf{RSVP model}\par\medskip
    \begin{tikzpicture}
        \begin{axis}[
        xlabel = {Aggregated state space size},
        ylabel = {$\norm{\tilde{p}_k - p_k}_1$},
        ymode=log, xmin = 1, ymin = 1e-13, domain=0:300, legend style={cells={align=left}},
        xmax = 234, ymax = 2, plot, legend pos = south west,
        width=0.9\linewidth, height=6.5cm
        ]
        \addplot coordinates {
            (1,0.9999999999996783) (14,1.0590014209731492e93) (27,1.093927771271882e9) (40,12.935893732569744) (53,1.601882559284899) (66,1.0263999790864988) (79,4.440360703614884e9) (92,0.10823023233173526) (105,0.006656381653855471) (118,0.0011022447661217093) (131,0.00012873844936988325) (144,2.868205016207439e-6) (157,1.3213192209434936e-10) (170,1.3114389255293993e-12) (183,1.3070378506236906e-12) (196,1.303811571017354e-12) (209,1.305917127743144e-12) (222,1.3076426473347334e-12) (235,1.3091230464697159e-12) (248,1.30885011763483e-12) (261,1.308850117634904e-12) (274,1.308850117634904e-12) (287,1.308850117634904e-12) (300,1.308850117634904e-12)
        };
        \addlegendentry{$\norm{e_{10^4}}_1$, Arnoldi, exact $p_0$}
        \addplot coordinates {
            (1,0.9999999999971305) (14,NaN) (27,NaN) (40,2.6210701366959337e127) (53,7.382732221673995e65) (66,3.47238182454086e242) (79,4.1361829948267124e52) (92,NaN) (105,1049.964897942284) (118,0.6778038001260972) (131,0.5982002267321362) (144,0.14791531930119853) (157,2.1103031129639635e-6) (170,3.0179119464193626e-10) (183,2.0514197359653264e-10) (196,3.499224377307924e-10) (209,2.6997817469993446e-10) (222,1.4807839176906262e-10) (235,9.612420811688927e-11) (248,8.233896140165956e-11) (261,7.276852916613362e-11) (274,8.873557096644478e-11) (287,6.941767147831942e-11) (300,1.0133929029448865e-10)
        };
        \addlegendentry{$\norm{e_{10^6}}_1$, Arnoldi, exact $p_0$}
        \addplot coordinates {
            (1,1.949928382850986) (13,1.9050753657561208) (31,1.3513770222377024) (85,0.18207581740237608) (123,0.001733212465603741) (234,2.483729145080202e-13)
        };
        \addlegendentry{ $\norm{e_{10^4}}_1$, Exlump, exact $p_0$}
        \addplot coordinates {
            (1,1.9498808980075593) (13,1.9098973279096165) (31,1.3409567392481854) (85,0.26378462546805925) (123,0.010335033836082565) (234,0.005262650500545417)
        };
        \addlegendentry{$\norm{e_{10^4}}_1$, Exlump, non-exact $p_0$}
        \addplot coordinates {
            (1,1.9776035540749521) (13,1.9724002080337637) (31,1.8633400696449895) (85,6.583851230470933e-5) (123,3.5792770755141195e-6) (234,6.804311490889649e-13)
        };
        \addlegendentry{$\norm{e_{10^6}}_1$, Exlump, exact $p_0$}
        \end{axis}
    \end{tikzpicture}
    \textbf{Cluster model}\par\medskip
    \begin{tikzpicture}
        \begin{axis}[
        xlabel = {Aggregated state space size},
        ylabel = {$\norm{\tilde{p}_k - p_k}_1$},
        ymode=log, xmin = 3, ymin = 1e-15, legend style={cells={align=left}},
        xmax = 5523, ymax = 2, plot, legend pos = south east,
        width=0.9\linewidth, height=6.5cm
        ]
        \addplot coordinates {
            (1,0.9999999999997736) (101,2.9960676944945974) (201,1.0974347226627747e-5) (301,1.1665471161805482e-9) (401,4.7764182672623035e-12) (801,4.655221907811145e-12) (3000,4.655221907811145e-12) (5523,4.655221907811145e-12)
        };
        \addlegendentry{$\norm{e_{10^4}}_1$, Arnoldi, exact $p_0$}
        \addplot coordinates {
            (1,0.9999999999997214) (101,0.9999999999997214) (201,0.003193343448207156) (301,1.0636349656083822e-7) (401,5.426755841233736e-10) (801,5.474716192697255e-10) (3000,5.580831224640294e-10) (5523,5.601280670949976e-10)
        };
        \addlegendentry{$\norm{e_{10^6}}_1$, Arnoldi, exact $p_0$}
        \addplot coordinates {
            (3,1.996267487746774) (93,0.08103739715454) (447,0.002751336696423843) (813,0.002283314385808007) (1021,0.0022998681863000453) (1135,0.011948128429298623) (1567,0.011638325911909523) (1803,0.00228743294468368) (2117,0.011701582406766284) (2584,0.0010107578319757582) (2658,0.00023701249356537527) (2778,4.897748750512534e-5) (3010,0.0002005197398484969) (3073,0.0012253542411099266) (4099,4.723290446820298e-5) (5523,7.276165492753359e-15)
        };
        \addlegendentry{$\norm{e_{10^4}}_1$, Exlump, exact $p_0$}
        \addplot coordinates {
            (3,1.9962674417491288) (93,0.08103626034232529) (447,0.002751336684639088) (813,0.0022833143002223754) (1021,0.0022998675979318825) (1135,0.011948128555891401) (1567,0.011638325850456214) (1803,0.0022874326423915296) (2117,0.011701582159996388) (2584,0.0010107575552173362) (2658,0.0002370122029990647) (2778,4.897761123273923e-5) (3010,0.00020051943143081547) (3073,0.001225354205629224) (4099,4.723293679567371e-5) (5523,1.4543713377705557e-13)
        };
        \addlegendentry{$\norm{e_{10^6}}_1$, Exlump, exact $p_0$}
        \end{axis}
    \end{tikzpicture}
    \textbf{Gene model}\par\medskip
    \begin{tikzpicture}
        \begin{axis}[
        xlabel = {Aggregated state space size},
        ylabel = {$\norm{\tilde{p}_k - p_k}_1$},
        ymode=log, xmin = 20, ymin = 1e-15, legend style={cells={align=left}},
        xmax = 43957, ymax = 2, plot, legend pos = north east,
        width=0.9\linewidth, height=6.5cm
        ]
        \addplot coordinates {
            (1,0.9999999999999444) (201,0.6587687932467388) (401,0.0003855514649099467) (501,2.573163693666238e-7) (601,1.8229725718925325e-11) (801,2.290723929352115e-12) (10000,2.29145345565224e-12) (20000,2.29136852853893e-12) (30000,2.29136437438633e-12) (43957,2.29136839814596e-12)
        };
        \addlegendentry{$\norm{e_{10^4}}_1$, Arnoldi, exact $p_0$}
        \addplot coordinates {
            (1,0.99999999999884) (401,130.99441088094216) (801,2.3713831499619022) (901,0.009014472373013217) (1001,2.6196155723189905e-5) (1101,8.694450146679285e-9) (1201,5.926427299167131e-9) (1501,5.926424503754961e-10) (10000,5.926424573529503e-10) (20000,5.926424575234568e-10) (30000,5.926424564017603e-10) (43957,5.9264245769198454e-10)
        };
        \addlegendentry{$\norm{e_{10^6}}_1$, Arnoldi, exact $p_0$}
        \addplot coordinates {
            (9,1.9961241470164677) (42,1.996182001490757) (10257,1.9939769913780414) (15072,1.9931514373096355) (16255,1.803599246729235) (18620,1.7311750713546123) (19738,1.7402383080063302) (23058,1.3188373676417149) (25035,1.3855001493612977) (28557,1.3728751636567667) (30476,1.3761130324038422) (31755,1.3727051844015754) (43957,2.950509031464231e-15)
        };
        \addlegendentry{$\norm{e_{10^4}}_1$, Exlump, exact $p_0$}
        \addplot coordinates {
            (9,1.9961241470164677) (42,1.996182001490757) (10257,1.9939769913780414) (15072,1.9931514373096355) (16255,1.803599246729235) (18620,1.7311750713546123) (19738,1.7402383080063302) (23058,1.3188373676417149) (25035,1.3855001493612977) (28557,1.3728751636567667) (30476,1.3761130324038422) (31755,1.3727051844015754) (43957,2.950509031464231e-15)
        };
        \addlegendentry{$\norm{e_{10^6}}_1$, Exlump, exact $p_0$}
        \end{axis}
    \end{tikzpicture}
    \caption{$\norm{e_k}_1$ in different models for Arnoldi and Exlump aggregations}
    \label{fig:errorcomp}
\end{figure}
In the RSVP model, both aggregation methods perform similarly for an aggregated state space size $j \geq 150$.
Before this point, the error made by the Arnoldi aggregation is larger or equal in all cases.
For Exlump, the error is bounded above by two.
This bound is explained by $\mat{\Pi}$ acting like a Markov chain and by $\pi_k$ and $\tilde{p}_k$ still being probability distributions~\cite[3]{michel2025errbndmarkovaggr}, as the 1-norm of the difference of two probability distributions can be at most two.
Although not shown here, $\norm{e_{10^6}}_1$ in Exlump aggregations with non-exact $p_0$ is close to $\norm{e_{10^6}}_1$ for exact $p_0$.
This closeness contrasts the exact and non-exact $p_0$ at $k = 10^4$.
This effect is caused by $\tilde{p}_{10^6}$ and $p_{10^6}$ having approximated the same eigenvector or stationary distribution of $\mat{P}$ up to an error of $10^{-8}$.
For $k = 10^4$, this convergence has not yet happened.

If we look at the larger cluster model, the behaviour is different.
The Arnoldi aggregation consistently outperforms the Exlump aggregation, except for very small or large $j$.
In the case of small $j$, it is for the same reason mentioned beforehand.
Still, both methods do not yield usable aggregations for $1 \leq j \leq 100$.
However, the exact aggregation found by Exlump at $j = 5523$ performs better than the Arnoldi aggregation, which has already stopped improving at $j \approx 400$.
Even further, $\norm{e_k}_1$ is the same for both $k = 10^4$ and $k = 10^6$ in the Exlump aggregation.
This can also be observed for more $k$.
In contrast, $\norm{e_k}_1$ in the Arnoldi aggregation is generally bounded through $k \cdot \macheps \leq \norm{e_k}_1$, as already seen in Figure~\ref{fig:findingexactconvcriterion}.
This difference between aggregation methods is again explained by $\pi_0$ converging more rapidly towards an eigenvector in Exlump aggregations than in Arnoldi aggregations.
For example, $\norm{\pi_{10^4}\tran - \pi_{10^4}\tran \mat{\Pi}}_1$ is roughly $10^{-28}$ in Exlump aggregations and $10^{-12}$ in Arnoldi aggregations for $j = 2778$.
So, computing $\pi_{k + 1}\tran = \pi_k\tran \mat{\Pi}$ is like applying $\mat{I}$ to $\pi_k\tran$, where practically no error is made.
Thus, computing $\tilde{p}_k$ in Exlump aggregations is numerically more stable, giving us $\abs{\norm{e_{10^4}}_1 - \norm{e_{10^6}}_1} \approx 10^{-10}$.
We will continue to observe this effect.
Lastly, using non-exact $p_0$ yields the same result.
For Exlump aggregations, this is explained by the stationary distribution of the cluster model being close to uniform, which random $p_0$ are as well.

Finally, we look at the gene model, for which the Exlump method finds no exact aggregation.
Hence, Exlump finds no sensible aggregation except for $j = n = 43,957$.
In contrast, Arnoldi aggregations quickly improve with larger $j$ until $j \approx 1100$.
From this aggregated state space size onward, we again reach the typical error observed for the other models with Arnoldi aggregations.
The same behaviour was observed in the Lotka-Volterra model, where Exlump fails to yield usable aggregations until $j = n = 10,201$, while the Arnoldi aggregations gave the same small errors as above for $j \geq 600$.

Although not pictured here, the behaviour of the Lotka-Volterra model concerning Exlump and Arnoldi aggregations mirrors that of the gene model, although extended to its larger state space size.
This similarity hints at this behaviour being expected for Markov chains where Exlump can not find an exact aggregation.

In summary, Exlump outperforms Arnoldi at the same aggregated state space size for small Markov chains with exact aggregations.
It is the other way around for larger Markov chains with exact aggregations, except for when Exlump finds this exact aggregation.
Exlump's superiority over Arnoldi at exact aggregations is due to Exlump's numerically more stable computation of $\tilde{p}_k$.
This instability of Arnoldi implies that if $k \geq 10^{11}$, then Arnoldi aggregations will always yield $\norm{e_k}_1 \geq 1$, making it useless for such $k$.
This problem does not arise in Exlump aggregations.
Lastly, for Markov chains where Exlump finds no exact aggregation, it fails to find any usable aggregations, except if $j = n$, while Arnoldi eventually converges much earlier than usual.

Now that we have compared the errors made by both aggregation methods, we must also compare their runtime to assess their usability.
We will compute $\tilde{p}_{10^5}$ and $p_{10^5}$ respectively for differing aggregated state space sizes.
We will also further measure the runtime piecewise for Arnoldi aggregations to determine the expensive parts.
Thus, we measure once without computing the convergence criterion from Section~\ref{subsec:determining-convergence} with Algorithm~\ref{alg:qralgorithm} and then include it in another run.
Similarly, we exclude the computation time for $\tilde{p}_{10^5}$ in one run and include it in another one.
As Exlump is implemented in Python and Arnoldi in Julia, we compare their runtimes to the runtime needed to compute $p_{10^5}$ in their respective languages, as not to compare the speed of Python and Julia.
Again, we use the same three models as before.
\begin{figure}[H]
    \centering
    \begin{tikzpicture}
        \begin{axis}[
            xlabel = {Aggregated state space size},
            ylabel = {Runtime in nanseconds}, domain=0:300,
            ymode=log, xmin =1, ymin = 1e4, legend style={cells={align=left}},
            xmax = 235, ymax = 1e10, plot, legend pos = south east,
            width=0.9\linewidth, height=6.5cm
        ]
            \addplot coordinates {
                (1.0,16167.0) (14.0,244874.99999999997) (27.0,683708.0) (40.0,1.327875e6) (53.0,2.132958e6) (66.0,3.118167e6) (79.0,4.288208e6) (92.0,5.648042e6) (105.0,7.1655e6) (118.0,8.897708e6) (131.0,1.0921125e7) (144.0,1.2932e7) (157.0,1.5126333e7) (170.0,1.7543666e7) (183.0,2.0087167e7) (196.0,2.2875292e7) (209.0,2.5942334e7) (222.0,2.9546458e7) (235.0,3.2515125e7) (248.0,4.1434625e7) (261.0,3.9724833e7) (274.0,4.3316375e7) (287.0,4.774925e7) (300.0,5.2196708e7)
            };
            \addlegendentry{Alg.~\ref{alg:arnoldi-iteration}}
            \addplot coordinates {
                (1.0,28167.0) (14.0,1.417333e6) (27.0,6.905583e6) (40.0,1.8450959e7) (53.0,3.9315125e7) (66.0,7.1557625e7) (79.0,1.2844470899999999e8) (92.0,2.06478291e8) (105.0,3.114835e8) (118.0,4.4077325e8) (131.0,6.1829525e8) (144.0,8.84453e8) (157.0,1.11042825e9) (170.0,1.361418583e9) (183.0,1.690980875e9) (196.0,2.211813958e9) (209.0,3.411024208e9) (222.0,3.279998667e9) (235.0,3.818750042e9) (248.0,4.413651417e9) (261.0,6.387936083e9) (274.0,6.506113958e9) (287.0,8.91052725e9) (300.0,8.947452458e9)
            };
            \addlegendentry{Alg.~\ref{alg:arnoldi-iteration} + Alg.~\ref{alg:qralgorithm}}
            \addplot coordinates {
                (1.0,3.186125e6) (14.0,2.3111291e7) (27.0,4.3066084e7) (40.0,4.466575e7) (53.0,7.4442042e7) (66.0,1.17590542e8) (79.0,1.66460541e8) (92.0,2.07320792e8) (105.0,2.76531333e8) (118.0,3.46874208e8) (131.0,4.83037583e8) (144.0,6.13090917e8) (157.0,7.38502584e8) (170.0,8.64359417e8) (183.0,1.022389583e9) (196.0,1.1495105e9) (209.0,1.511606833e9) (222.0,1.464653667e9) (235.0,1.658823291e9) (248.0,1.836984792e9) (261.0,2.159743083e9) (274.0,2.384947e9) (287.0,2.452161958e9) (300.0,2.636279583e9)
            };
            \addlegendentry{$\tilde{p}_{10^5}$ with Alg.~\ref{alg:arnoldi-iteration}}
            \addplot coordinates {
                (1.0,3.198708e6) (14.0,2.4334583e7) (27.0,4.93595e7) (40.0,6.2896458e7) (53.0,1.14603333e8) (66.0,1.90650959e8) (79.0,2.97788209e8) (92.0,4.08959792e8) (105.0,5.97906917e8) (118.0,8.02907083e8) (131.0,1.26577625e9) (144.0,1.486441625e9) (157.0,1.7811855e9) (170.0,2.210081917e9) (183.0,3.0760695e9) (196.0,4.616175416e9) (209.0,5.213340125e9) (222.0,6.080823666e9) (235.0,7.721852333e9) (248.0,6.158582292e9) (261.0,7.244801417e9) (274.0,1.1122885e10) (287.0,9.587029875e9) (300.0,1.1162920875e10)
            };
            \addlegendentry{$\tilde{p}_{10^5}$ with Alg.~\ref{alg:arnoldi-iteration} + Alg.~\ref{alg:qralgorithm}}
            \addplot+[no marks, gray, dashed]
                {4.64735667e8}
            node[pos=0.15, above] {Computing $p_{10^5}$};
        \end{axis}
    \end{tikzpicture}
    \begin{tikzpicture}
        \begin{axis}[
            xlabel = {Aggregated state space size},
            ylabel = {Runtime in nanseconds}, domain=0:234,
            ymode=log, xmin = 1, ymin = 1e7, legend style={cells={align=left}},
            xmax = 235, ymax = 1e11, plot, legend pos = south east,
            width=0.9\linewidth, height=6.5cm
        ]
            \addplot coordinates {
                (234,170905113.22021484) (233,302190303.80249023) (182,230839967.72766113) (123,205775260.92529297) (121,224488973.6175537) (110,195050954.8187256) (87,163579940.79589844) (83,165996074.67651367) (31,47124862.67089844) (26,50933122.634887695) (20,89674949.6459961) (16,48655033.111572266) (1,48215779.35736731)
            };
            \addlegendentry{Exlump}
            \addplot coordinates {
                (234,570801973.3428955) (233,688514709.4726562) (182,608697891.2353516) (123,541729927.0629883) (121,552939891.8151855) (110,524688005.4473877) (87,1029531002.0446777) (83,486077308.65478516) (31,342112064.36157227) (26,342994213.10424805) (20,323736906.05163574) (1,303632351.47033436)
            };
            \addlegendentry{$\tilde{p}_{10^5}$ with Exlump}
            \addplot+[no marks, gray, dashed]
                {6337805747}
            node[pos=0.5, above] {Computing $p_{10^5}$};
        \end{axis}
    \end{tikzpicture}
    \caption{Runtime of computing Arnoldi and Exlump aggregations of the RSVP model}
    \label{fig:runtimersvpcomp}
\end{figure}
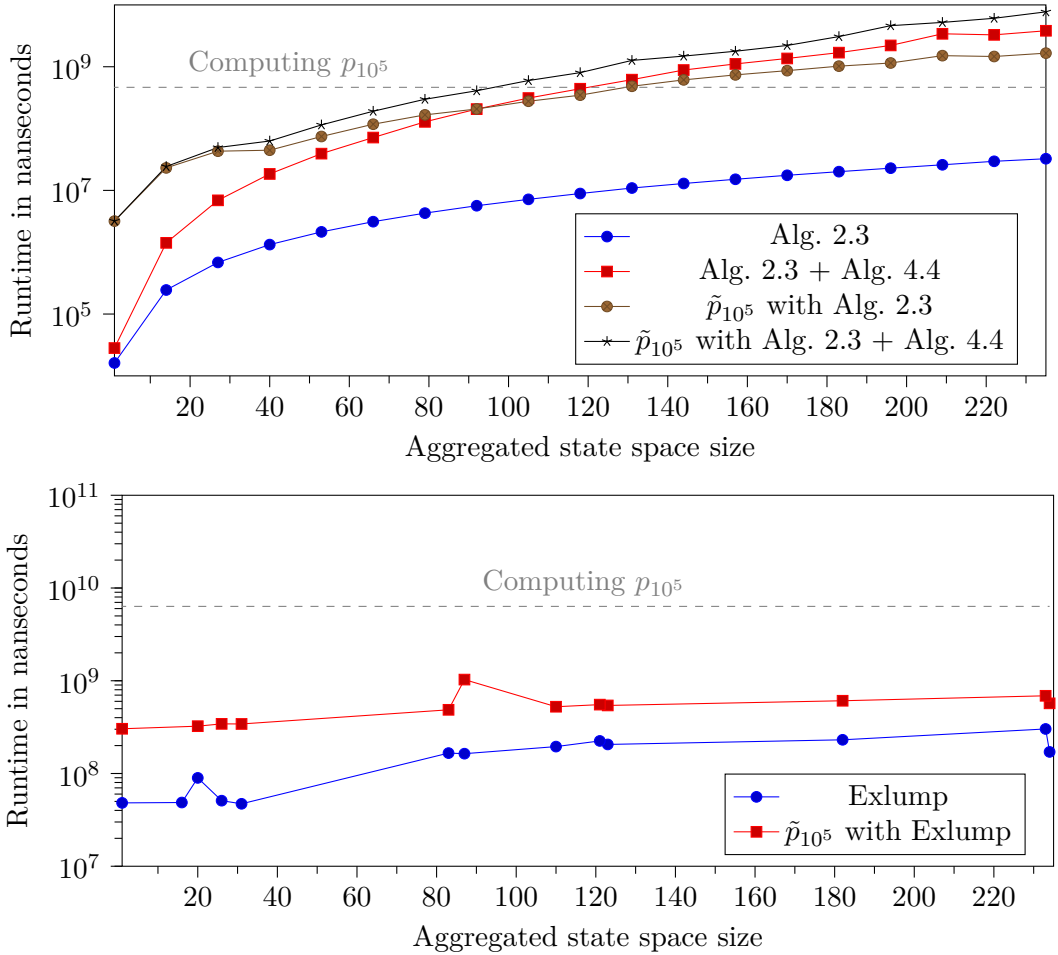
Firstly, the convergence criterion via Algorithm~\ref{alg:qralgorithm} causes the most significant hit to the runtime, accounting for up to 80\% of the total runtime.
Then, 19\% is taken to compute $\tilde{p}_{10^5}$.
If we include the convergence criterion, we can see in Figure~\ref{fig:runtimersvpcomp} that Exlump consistently outperforms Arnoldi for the same $j$, considering Figure~\ref{fig:errorcomp}, with Exlump being consistently faster than just computing $p_{10^5}$.
Even if we exclude Algorithm~\ref{alg:qralgorithm}, making the Arnoldi method faster than naively computing $p_{10^5}$ for $j \leq 130$, Exlump is still faster at a similar, sometimes even better, error $\norm{e_k}_1$.
Even though Exlump's $\mat{\Pi}$ is sparse, and $\mat{H}_j$ is not, again, about 19\% of the total runtime is needed for Exlump to compute $\tilde{p}_{10^5}$.
This may be due to the small aggregated state space size, where the runtime difference given by sparse vector-matrix multiplication is not yet fully exploited or because of the aggregation being computed being relatively fast.
\begin{figure}[H]
    \centering
    \begin{tikzpicture}
        \begin{axis}[
            xlabel = {Aggregated state space size},
            ylabel = {Runtime in nanseconds}, domain=0:505,
            ymode=log, xmin = 1, ymin = 1e5, legend style={cells={align=left}},
            xmax = 505, ymax = 1e11, plot, legend pos = south east,
            width=0.9\linewidth, height=6.5cm
        ]
            \addplot coordinates {
                (1.0,283167.0) (22.0,7.914249999999999e6) (43.0,1.8899417e7) (64.0,3.5338583e7) (85.0,6.3126959e7) (106.0,8.48735e7) (127.0,1.15376875e8) (148.0,1.67903042e8) (169.0,2.13173625e8) (190.0,2.56115541e8) (211.0,3.74552667e8) (232.0,4.31593833e8) (253.0,5.0098699999999994e8) (274.0,6.903775e8) (295.0,7.24549041e8) (316.0,8.7795125e8) (337.0,9.57630625e8) (358.0,1.299554625e9) (379.0,1.236357083e9) (400.0,1.37194575e9) (421.0,1.528603083e9) (442.0,1.782771083e9) (463.0,1.911957709e9) (484.0,2.163060625e9) (505.0,2.470805834e9)
            };
            \addlegendentry{Alg.~\ref{alg:arnoldi-iteration}}
            \addplot coordinates {
                (1.0,531334.0) (22.0,1.09449833e8) (43.0,4.49423375e8) (64.0,1.381803e9) (85.0,2.839345e9) (106.0,6.428480625e9) (127.0,8.351942624999999e9) (148.0,1.3520093125e10) (169.0,2.0127933959e10) (190.0,2.6863216667e10) (211.0,3.6253416583e10) (232.0,4.9224772667e10) (253.0,5.6920589375e10) (274.0,7.6336695292e10) (295.0,9.1691585e10) (316.0,1.02351404917e11)
            };
            \addlegendentry{Alg.~\ref{alg:arnoldi-iteration} + Alg.~\ref{alg:qralgorithm}}
            \addplot coordinates {
                (1.0,3.928208e6) (22.0,4.1529458e7) (43.0,8.4052625e7) (64.0,1.54313458e8) (85.0,2.42091541e8) (106.0,3.78725792e8) (127.0,5.1614574999999994e8) (148.0,8.20015e8) (169.0,1.030604833e9) (190.0,1.346032167e9) (211.0,1.992986e9) (232.0,2.018570167e9) (253.0,2.607595792e9) (274.0,2.927998708e9) (295.0,3.393051125e9) (316.0,3.951233542e9) (337.0,4.396471584e9) (358.0,4.832830709e9) (379.0,5.717839958e9) (400.0,6.444834792e9) (421.0,7.101320125e9) (442.0,7.215499667e9) (463.0,8.323566916000001e9) (484.0,9.102346625e9) (505.0,9.866025708e9)
            };
            \addlegendentry{$\tilde{p}_{10^5}$ with Alg.~\ref{alg:arnoldi-iteration}}
            \addplot coordinates {
                (1.0,3.658167e6) (22.0,1.23741875e8) (43.0,5.1856995800000006e8) (64.0,1.431290625e9) (85.0,3.026162125e9) (106.0,5.388370458e9) (127.0,8.796733375e9) (148.0,1.4170757125e10) (169.0,2.1365119334e10) (190.0,2.9054022666e10) (211.0,3.4930075e10) (232.0,5.0213824792e10) (253.0,5.8556894875e10) (274.0,7.182273575e10) (295.0,8.6034242541e10) (316.0,1.02940723333e11)
            };
            \addlegendentry{$\tilde{p}_{10^5}$ with Alg.~\ref{alg:arnoldi-iteration} + Alg.~\ref{alg:qralgorithm}}
            \addplot+[no marks, gray, dashed]
                {1.000360925e10}
            node[pos=0.125, above] {Computing $p_{10^5}$};
        \end{axis}
    \end{tikzpicture}
    \begin{tikzpicture}
        \begin{axis}[
            xlabel = {Aggregated state space size},
            ylabel = {Runtime in nanseconds}, domain=0:5523,
            ymode=log, xmin = 1, ymin = 1e8, legend style={cells={align=left}},
            xmax = 5523, ymax = 1e12, plot, legend pos = south east,
            width=0.9\linewidth, height=6.5cm
        ]
            \addplot coordinates {
                (5523,83314050912.85706) (4389,120294332981.10962) (3339,89052643060.6842) (3011,108575690984.72595) (2824,98519396305.08423) (2658,62364818096.16089) (2539,58140552997.58911) (2258,70762601852.41699) (1803,89563687801.36108) (1531,102784945964.81323) (1205,96981308937.07275) (919,35332195997.23816) (493,22250364065.170288) (427,20193339109.420776) (209,8075666904.449463) (50,5679363012.313843) (27,2677618741.9891357) (17,1083217144.0124512) (9,738503217.6971436) (3,537408828.7353516)
            };
            \addlegendentry{Exlump + I/O}
            \addplot coordinates {
                (5523,88728263854.98047) (4389,124613713026.04675) (3339,91810101985.9314) (3011,111375453948.97461) (2824,101483062982.5592) (2658,64589128971.09985) (2539,60579787969.58923) (2258,73266474962.2345) (1803,91525496006.01196) (1531,104852325201.03455) (1205,96969364881.51557) (919,34730596780.77698) (493,20454180002.212524) (427,20143305063.24768) (209,7190715074.539185) (50,4480700969.696045) (27,2837113142.01355) (17,1324820995.3308105) (9,1009343862.5335693) (3,774098873.1384277)
            };
            \addlegendentry{$\tilde{p}_{10^5}$ with Exlump + I/O}
            \addplot+[no marks, gray, dashed]
                {81227878808}
            node[pos=0.115, above] {Computing $p_{10^5}$};
        \end{axis}
    \end{tikzpicture}
    \caption{Runtime of computing Arnoldi and Exlump aggregations of the cluster model}
    \label{fig:runtimeclustercomp}
\end{figure}
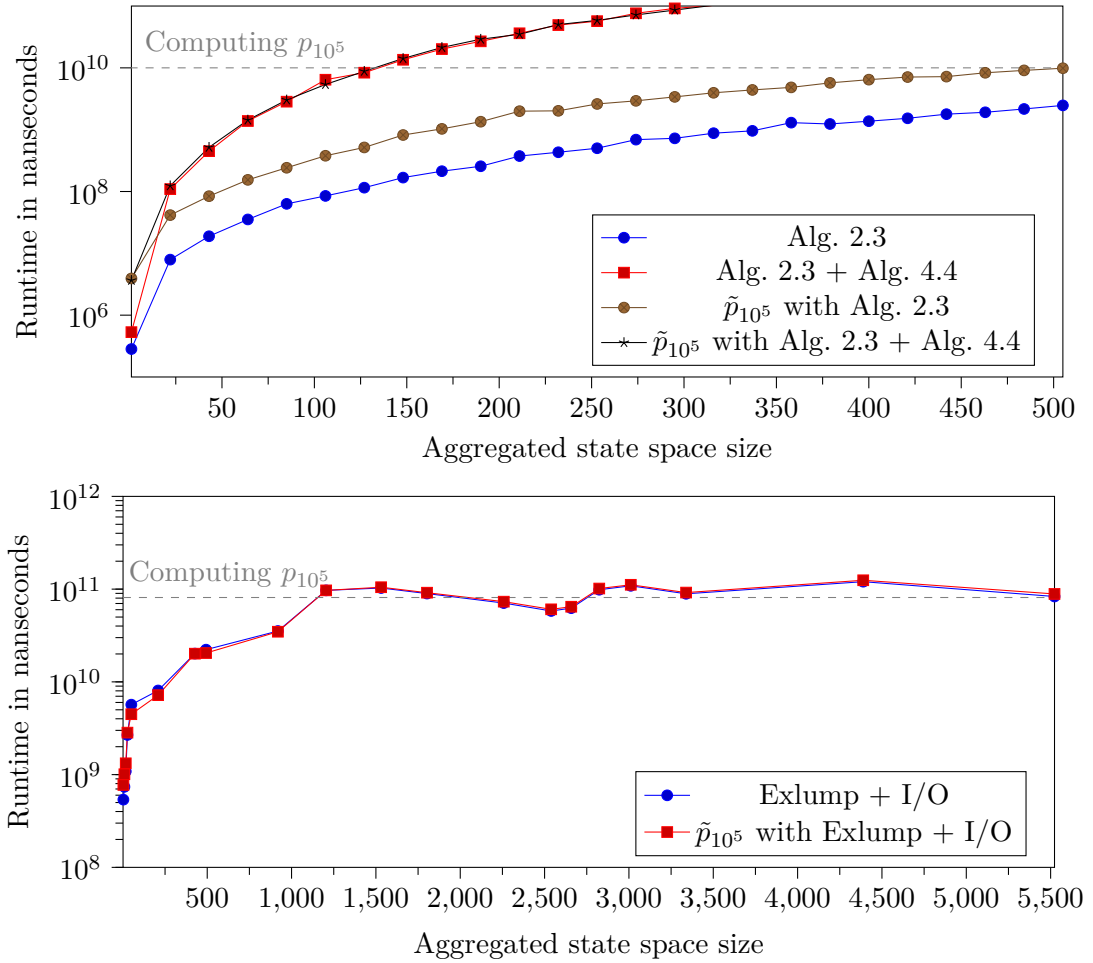
A similar picture emerges in the runtime analysis of the cluster model.
In the Arnoldi method, we cease to achieve usable runtimes after $j \approx 130$.
Due to the larger state space sizes, we have that just computing the Arnoldi aggregation with the convergence criterion takes about 97\% of the total time, while computing $\tilde{p}_{10^5}$ takes 1\%.
On the other hand, for the Exlump method, computing $\tilde{p}_{10^5}$ takes 7\% of the total time.

As the state space size is much larger here than in the RSVP model, the sparse vector-matrix multiplication is faster than the normal one.
Thus, the effect of $\tilde{p}_{10^5}$ needing, relatively, more time in Exlump than in Arnoldi is explained through the comparatively faster computation of an Exlump aggregation.
The convergence criterion will generally dominate the runtime, as it is in $\mathcal{O}(j^4)$, while the Arnoldi iteration and computation of $\tilde{p}_k$ are in $\mathcal{O}(j^2)$.

Lastly, we look at the gene model.
From Figure~\ref{fig:errorcomp}, we already know that the only usable Exlump aggregation is the same size as the original Markov chain, which will always result in a runtime worse than just using $\mat{P}$ outright.
As such, we will just look at Arnoldi aggregations, to see whether we can achieve a usable speedup in this case.

Although the Lotka-Volterra model also lacks an exact aggregation, we chose the gene model for this purpose.
This is because, as we know from Figure~\ref{fig:errorcomp}, a good Arnoldi aggregation is found earlier in the gene model than in the full state space size.
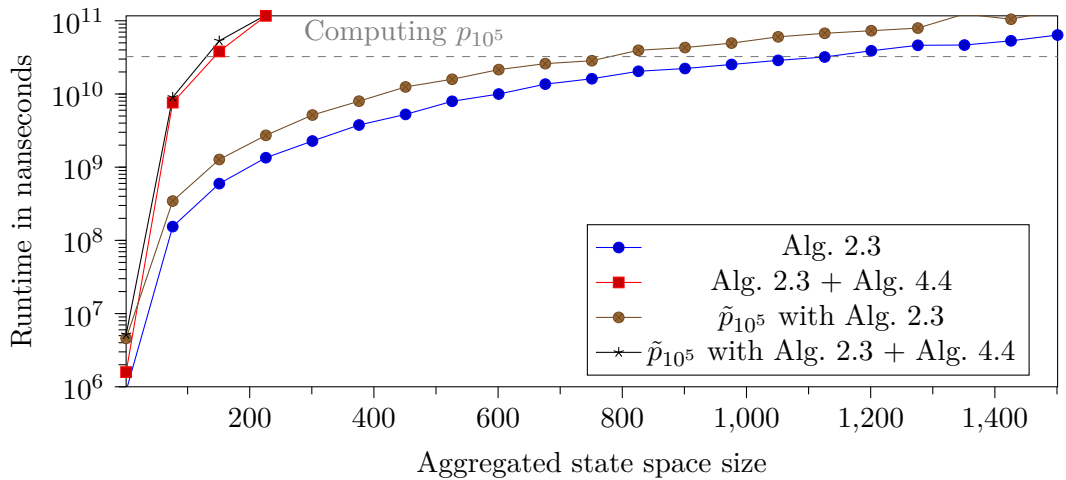
\begin{figure}[H]
    \centering
    \begin{tikzpicture}
        \begin{axis}[
            xlabel = {Aggregated state space size},
            ylabel = {Runtime in nanseconds}, domain=0:1501,
            ymode=log, xmin = 1, ymin = 1e6, legend style={cells={align=left}},
            xmax = 1501, ymax = 1.16893025292e11, plot, legend pos = south east,
            width=0.9\linewidth, height=6.5cm
        ]
            \addplot coordinates {
                (1,877708.0) (76,1.54160958e8) (151,5.9600975e8) (226,1.347271916e9) (301,2.276919e9) (376,3.76796225e9) (451,5.26298625e9) (526,7.948147792e9) (601,9.984156584e9) (676,1.3627496542e10) (751,1.6156159959e10) (826,2.0428208458e10) (901,2.2274822833e10) (976,2.5303289875e10) (1051,2.8800052708e10) (1126,3.2028526417000004e10) (1201,3.8946550542e10) (1276,4.6280826e10) (1351,4.660551475e10) (1426,5.325985075e10) (1501,6.3921290125e10)
            };
            \addlegendentry{Alg.~\ref{alg:arnoldi-iteration}}
            \addplot coordinates {
                (1,1.588208e6) (76,7.620807416e9) (151,3.8087862709e10) (226,1.16893025292e11)
            };
            \addlegendentry{Alg.~\ref{alg:arnoldi-iteration} + Alg.~\ref{alg:qralgorithm}}
            \addplot coordinates {
                (1,4.580625e6) (76,3.44494833e8) (151,1.273896833e9) (226,2.723861375e9) (301,5.155652917e9) (376,7.977081875e9) (451,1.2486718792e10) (526,1.5898223208e10) (601,2.1526933875e10) (676,2.5981260125e10) (751,2.8473287625e10) (826,3.9546000125e10) (901,4.3083307875e10) (976,4.9539523084e10) (1051,6.036477925e10) (1126,6.74903875e10) (1201,7.3185328125e10) (1276,7.9510071167e10) (1351,1.23776516042e11) (1426,1.04997321625e11) (1501,1.40035275041e11)
            };
            \addlegendentry{$\tilde{p}_{10^5}$ with Alg.~\ref{alg:arnoldi-iteration}}
            \addplot coordinates {
                (1.0,5.166417e6) (76.0,9.02639025e9) (151.0,5.32050945e10) (226.0,1.1977875925e11)
            };
            \addlegendentry{$\tilde{p}_{10^5}$ with Alg.~\ref{alg:arnoldi-iteration} + Alg.~\ref{alg:qralgorithm}}
            \addplot+[no marks, gray, dashed]
                {3.2470448625000004e10}
            node[pos=0.3, above] {Computing $p_{10^5}$};
        \end{axis}
    \end{tikzpicture}
    \caption{Runtime of computing Arnoldi aggregations of the gene model}
    \label{fig:runtimegene}
\end{figure}
Again, if we include the convergence criterion, we fail to compute a usable aggregation quickly enough.
After $j \approx 150$, computing $\tilde{p}_{10^5}$ takes longer than computing $p_{10^5}$.
When ignoring the convergence criterion, aggregations up to $j \approx 800$ are computed fast enough to be useful.
There, we get the following errors and speedups, when compared to the time needed to compute $p_k$:
\begin{table}[H]
    \centering
    \begin{tabular}{|c||c|c|}
        \hline
        $j$ & $\approx\frac{\text{Runtime for}\ \tilde{p}_{10^5}}{\text{Runtime for}\ p_{10^5}}$ & $\approx\norm{e_{10^5}}$ \\ \hline\hline
        500 & $0.10$                                                                           & $2 \cdot 10^{-1}$                 \\ \hline
        600 & $0.66$                                                                           & $8 \cdot 10^{-2}$                 \\ \hline
        700 & $0.82$                                                                           & $6 \cdot 10^{-3}$                 \\ \hline
        800 & $1.01$                                                                           & $1 \cdot 10^{-3}$                \\ \hline
    \end{tabular}
    \caption{Speedup and $\norm{e_{10^5}}_1$ in Algorithm~\ref{alg:arnoldi-iteration} with the gene model}
    \label{tab:runtimeerrorgene}
\end{table}
The Arnoldi method with the convergence criterion is too slow to achieve significant or even any speedups.
On the other hand, Exlump can give useful aggregations in some cases, such as in Figure~\ref{fig:runtimersvpcomp} or, to a more limited degree, in Figure~\ref{fig:runtimeclustercomp}.
Thus, Exlump is ahead of Arnoldi if an exact aggregation can be found.
When removing the costly convergence criterion from Arnoldi, we can sometimes find a useful aggregation of a Markov chain where Exlump failed; see Figures~\ref{fig:errorcomp} and~\ref{fig:runtimegene}.
Even then, the needed runtime limits the usability.

\chapter{Conclusion}\label{ch:conclusion}
To conclude this thesis, we summarize all important results in Section~\ref{sec:results}.
This summary includes important theoretical properties in exact arithmetic of Arnoldi aggregations, the behaviour of Arnoldi aggregations in floating-point arithmetic, its runtime, and an evaluation of computed Arnoldi aggregations.
Then, in Section~\ref{sec:outlook}, we highlight important gaps and shortcomings of our current work while offering some possible approaches to fixing them.
These mainly concern the runtime of our implementation.
\section{Results}\label{sec:results}
Theorem~\ref{thrm:arnoldi-aggr-is-smallest-k-1-exact} shows that an Arnoldi aggregation of size $j$ is $(j - 1)$-exact with minimal state space size.
In Theorem~\ref{thrm:arnoldi-aggr-is-smallest-exact}, we have also seen that if an Arnoldi aggregation is exact, it is again of minimal state space size.
Both results are shown in exact arithmetic only.
Then, in Section~\ref{sec:error-bounds-on-approximated-transient-distributions} an example is provided to show that improving the error bounds presented in~\cite{michel2025errbndmarkovaggr} is not possible for Arnoldi aggregations in general and specifically for \ac{NCD} Markov chains.

Moving on to practical results, we have shown in Section~\ref{subsec:normalization-of-the-approximated-transient-distribution} and Figure~\ref{fig:normtest} that normalizing $\tilde{p}_k$ with respect to the 1-norm helps increase its accuracy, especially if the error is substantial.
Further, we have proven two conditions under which we know normalization helps.
However, these apply very rarely, so we have provided another three postulations, which remain unproven but can be applied more commonly.
Then, in Section~\ref{subsec:validating-the-theory}, we see that Theorem~\ref{thrm:arnoldi-aggr-is-smallest-k-1-exact} holds in floating-point arithmetic, whereas Theorem~\ref{thrm:arnoldi-aggr-is-smallest-exact} does not.
To determine when an Arnoldi aggregation has reached a sufficient level of accuracy, we introduce and experimentally validate a new convergence criterion via $\inp{\abs{\pi}, \abs{\mat{H}_j \mat{Q}_j - \mat{Q}_j \mat{P}} \cdot \mathbf{1}_1}$.

Lastly, we compare Arnoldi aggregations to Exlump aggregations from~\cite{michel2025errbndmarkovaggr} for four Markov chain models.
Exlump outperforms Arnoldi for smaller models where Exlump can find an exact aggregation smaller than the original Markov chain, as shown in Figures~\ref{fig:errorcomp} and~\ref{fig:runtimersvpcomp}.
If the model is larger but Exlump can still find such an exact aggregation, Arnoldi yields a smaller error but at a substantially larger runtime, making Exlump the more viable method.
However, if Exlump fails to find an exact aggregation substantially smaller than the original Markov chain, it is outperformed by Arnoldi, which can find aggregation with a much smaller error, at the price of a long runtime.
Figures~\ref{fig:runtimersvpcomp},~\ref{fig:runtimeclustercomp} and~\ref{fig:runtimegene} show that the computation of $\pi$ for our convergence criterion mainly contributes to this runtime.
This runtime makes it slower to compute an Arnoldi aggregation and the approximated transient distribution $\tilde{p}_k$, especially for larger $k$.
Although, if $k$ is smaller, there can be cases where using the Arnoldi aggregation with a small error is faster than naively computing $p_k$.

Another area where Arnoldi can outperform Exlump is the aggregation of non-sparse Markov chains.
This shift in performance is caused by $\mat{\Pi}$ retaining the sparseness property of $\mat{P}$ in Exlump, which is not the case in Arnoldi.
Thus, if $\mat{P}$ is sparse, the approximated transient distribution can be computed much faster in Exlump.
This advantage vanishes if $\mat{P}$ is no longer sparse.

\section{Outlook}\label{sec:outlook}
There multiple aspects which can still be expanded upon.
In general, for all kinds of aggregations, the idea of normalizing $\tilde{p}_k$ from Section~\ref{subsec:normalization-of-the-approximated-transient-distribution} must be further worked upon to be useful in practice.
This work includes a (dis-)proof of Conjectures~\ref{conj:unproven1},~\ref{conj:unproven2} and~\ref{conj:unproven3}.
Still, these bounds to $\tilde{p}_k$ for~(\ref{eq:normalization-helps}) are far from tight, as we have seen in Figure~\ref{fig:normtest}.
Thus, improvements to the existing bounds are necessary as well.

Furthermore, improving Arnoldi aggregations is still possible in many aspects.
Firstly, we have formally introduced the Arnoldi aggregation only for \ac{DTMC}s in Chapter~\ref{ch:finding-small-aggregations}.
However, we have observed similar properties and performance in experiments when using \ac{CTMC}s instead.
This application to \ac{CTMC}s can also be formalized and experimented upon to investigate such aggregations.
A second way to extend the theory for Arnoldi aggregations is to look for specific Markov chains where the error bound can be improved, in contrast to Section~\ref{sec:aggregations-under-error-bounds}.

Lastly, and most importantly, the applicability to real-world problems must be improved, mainly by decreasing the runtime, as seen in Section~\ref{subsec:comparison-to-exlump-aggregations}.
This is done the easiest by adapting the current implementation.
Currently, we compute $\inp{\abs{\pi}, \abs{\mat{H}_j \mat{Q}_j - \mat{Q}_j \mat{P}} \cdot \mathbf{1}_n}$ for every aggregation size.
Thus, it is possible to compute it only every $i$ steps, reducing the runtime of this, by far, most intensive part by a factor of $\frac{1}{i}$.
As this method would inevitably lead to larger aggregated state space sizes, one must determine $i$ to strike a good balance between terminating as early as possible and computing $\inp{\abs{\pi}, \abs{\mat{H}_j \mat{Q}_j - \mat{Q}_j \mat{P}} \cdot \mathbf{1}_n}$ as rarely.
Nonetheless, the overall runtime complexity remains the same, as it only changes by a factor of $\frac{1}{i}$, meaning that finding another, faster convergence criterion is still very much needed for a larger aggregated state space size.

One way to decrease the runtime is to use a faster implementation of the Gram-Schmidt procedure.
We use a simple non-parallelized version of \ac{CGSIR}.
However, it is, for example as shown in~\cite{lingen2000parallelgs}, possible to parallelize all versions of the Gram-Schmidt procedure, although to different degrees.
Further, another version of the Gram-Schmidt procedure, besides the ones we know so far (\ac{CGS}, \ac{MGS},\dots) might yield better results at fewer necessary floating-point operations for large vectors.
One such method is presented with the \enquote{randomized Gram-Schmidt procedure} from~\cite{balabanov2022rgs}.
Still, it is unclear how this method can be adapted to be used in the Arnoldi iteration.

Another approach to decrease the runtime is to keep the aggregated state space size $j$ small.
This reduction is possible through so-called restarting.
In general, this means reducing an Arnoldi relation (see Lemma~\ref{lem:arnoldi-relation}) of size $j$ to some other Arnoldi relation of size $j - \ell$ and then expanding the smaller relation again to size $j$.
We repeat this so-called restarting until meeting some criterion.
The two most common methods of restarting are shown in~\cite[195--215]{arbenz2016lecturenotes} and~\cite{stewart2002krylovschur}.
In a rough prototype, we applied the algorithm from~\cite{stewart2002krylovschur} to our gene model.
Across different, sensible values for the number of restarts and aggregated state space size, $\pi_0$, as chosen in Definition~\ref{def:arnoldi-aggr} and chosen as an approximate solution of $\pi_0 \tran \mat{A} = p_0$, yielded bad results for $\norm{e_k}_1$ in general.
So, it is unclear which starting vector, if any, yields low initial errors.

Further, the above algorithms were developed to yield good approximations of eigenvalues and -vectors, which does not imply good properties for an aggregation.
As such, finding a restarting method that is purpose-fit for aggregating Markov chains may be necessary.
This method should ideally resolve the problems above while maintaining a low error, similar to the one in Figure~\ref{fig:errorcomp}.

\backmatter

\end{document}